\documentclass[dvipdfmx]{amsart}

\makeatletter
\@addtoreset{equation}{section}

\makeatother

\usepackage{amsfonts, amsmath, amssymb, amsthm}
\usepackage{url}
\usepackage[dvipdfmx]{graphicx}
\usepackage{color}
\usepackage{amscd}
\usepackage[right]{lineno}
\setcounter{tocdepth}{1}

\usepackage{enumerate}


%
%
%
%

\newtheorem{theorem}{Theorem}[section]
\newtheorem{lemma}{Lemma}[section]
\newtheorem{proposition}{Proposition}[section]
\newtheorem{corollary}{Corollary}[section]
\theoremstyle{remark}

\newtheorem{remark}{Remark}[section]

\newtheorem*{convention}{Convention}
\newtheorem{question}{Question}
\newtheorem{problem}{Problem}
\newtheorem{claim}{Claim}

\newcommand{\teich}{\mathcal{T}}

\newcommand{\ext}{{\rm Ext}}
\newcommand{\extsymp}{\boldsymbol{\epsilon}}

\newcommand{\mcg}{{\rm Mod}}

\newcommand{\gromov}[3]{\langle #1\,|\,#2\rangle_{#3}}
\newcommand{\cl}[1]{{\rm cl}_{GM}(#1)}

\newcommand{\APT}{{\rm APT}}

\newcommand{\veccdot}{}

\newcommand{\Bers}[1]{\mathcal{T}^B_{#1}}

\newcommand{\harmonicmeasure}{{\boldsymbol \omega}}

\newcommand{\ThursM}{\mu_{Th}}
\newcommand{\PThursM}{\hat{\mu}_{Th}}
\newcommand{\HSmeas}{{\bf m}}
\newcommand{\extrecip}{{\bf u}}
\newcommand{\convgenus}{{\boldsymbol \xi}}
\newcommand{\proje}{\boldsymbol{\psi}}

\newcommand{\pushThursMBers}{{\boldsymbol \mu}^B}

\newcommand{\Levi}[3]{\mathcal{L}(#1)[#2,#3]}

\newcommand{\repro}{\eta}

\makeindex

\begin{document}

\title[Poisson integral formula]{Pluripotential theory on Teichm\"uller space II \\ -- Poisson integral formula --}
\author{Hideki Miyachi}

\date{\today}
\address{School of Mathematics and Physics,
College of Science and Engineering,
Kanazawa University,
Kakuma-machi, Kanazawa,
Ishikawa, 920-1192, Japan
}
\email{miyachi@se.kanazawa-u.ac.jp}
\thanks{
t
This work is partially supported by JSPS KAKENHI Grant Numbers
20H01800,
20K20519,
18KK0071,
and was partially supported by JSPS KAKENHI Grant Numbers
16K05202,
16H03933,
17H02843}
\subjclass[2010]{32G05, 32G15, 32U15, 32U35, 57M50, 26B20, 30E20}
\keywords{Singular Euclidean structures, Teichm\"uller space, Teichm\"uller distance, Levi forms, Pluricomplex Green functions, Poisson integral formula, Thurston measure}

\begin{abstract}
This is the second paper in a series of investigations of the pluripotential theory on Teichm\"uller space. The main purpose of this paper is to establish the Poisson integral formula for pluriharmonic functions on Teichm\"uller space which are continuous on the Bers compactification. We also observe that the Schwarz type theorem on the boundary behavior of the Poisson integral. We will see a relationship between the pluriharmonic measures and the Patterson-Sullivan measures discussed by Athreya, Bufetov, Eskin and Mirzakhani.
\end{abstract}

\maketitle
\tableofcontents

\section{Introduction}
\label{sec:Introduction}
This is the second paper in a series of investigations of the pluripotential theory on Teichm\"uller space.
The first paper in the series is \cite{miyachi-pluripotentialtheory1} in which we discussed an alternative approach to the Krushkal formula (\cite{MR1142683}) of the pluricomplex Green function on the Teichm\"uller space (cf. \eqref{eq:Green-function}).
The main purpose of this paper is to establish the Poisson integral formula for pluriharmonic functions on Teichm\"uller space which are continuous on the Bers compactification.
This result is announced in \cite{ProceedingsThurstoni2018} and \cite{Oberwolfach-report-miyachi}.

\subsection{Classical Poisson integral formula and a dictionary}
It is well-known that any continuous function $u$ on the closed upper half-plane $\overline{\mathbb{H}}=\mathbb{H}\cup (\mathbb{R}\cup \{\infty\})$ which is harmonic on the upper-half plane $\mathbb{H}$ satisifies
\begin{equation}
\label{eq:Poisson_integral_formula_up}
u(z)=\int_{\partial \mathbb{H}}P(z,\xi)u(\xi)\dfrac{d\xi}{\pi},
\end{equation}
for $z=x+iy\in \mathbb{H}$, where $\partial \mathbb{H}=\mathbb{R}\cup \{\infty\}$ and
$$
P(z,\xi)=\dfrac{{\rm Im}(z)}{|z-\xi|^2}.
$$
The integral representation \eqref{eq:Poisson_integral_formula_up} is called the \emph{Poisson integral formula}. Meanwhile, it is also well-known that the Teichm\"uller space $\teich_1$ of tori is identified with the upper-half plane $\mathbb{H}$ via the period map. In recognizing the Poisson integral formula \eqref{eq:Poisson_integral_formula_up} as the formula for the (pluri)harmonic functions on the Teichm\"uller space $\teich_1$, we obtain a dictionary as Table \ref{table:dictionary} in the case of tori (i,e, $(g,m)=(1,0)$) (cf. \S\ref{subsec:Case_tori}). In this paper, we will justify the correspondence in Table \ref{table:dictionary} for arbitrary $(g,m)$, as we state in Theorem \ref{thm:Poisson-integral-formula}.
\begin{table}
\begin{tabular}{|c|c|}\hline
Upper half-plane $\mathbb{H}$ & Teichm\"uller space $\teich_{g,m}$ \\ \hline\hline
Harmonic function & Pluriharmonic function \\ \hline
Ideal boundary $\partial \mathbb{H}$ & Bers boundary \\ \hline
Green function & $\log \tanh$ of the Teichm\"uller distance \\ \hline
Horofunctions (Busemann functions) & log of extremal lengths \\ \hline
Poisson kernel & Ratio of extremal lengths \\ \hline
Harmonic measure on $\partial \mathbb{H}$ & 
\begin{tabular}{c}
Pushforward measure of \\
the normalized Thurston measure
\end{tabular}
\\ \hline
\end{tabular}
\caption{A dictionary}
\label{table:dictionary}
\end{table}

\subsection{Results}
\label{subsec:resultssec}

Let $\teich_{g,m}$ be the Teichm\"uller space
of type $(g,m)$.
Let $\Bers{x_0}$ be the Bers slice with base point $x_0 \in \teich_{g,m}$.
Krushkal \cite{MR1119946} showed that Teichm\"uller space is hyperconvex.
By the Nehari-Kraus theorem, the Bers slice $\Bers{x_0}$ is a bounded domain in a finite dimensional complex Banach space (cf. \cite{MR0130972}).
Demailly \cite{MR881709}  establishes fundamental results in the pluripotential theory, the existence of the pluricomplex Green functions and the pluriharmonic measures for bounded hyperconvex domains in the complex Euclidean space (see \S\ref{subsec:result-from-pluripotential-theory}).

\subsubsection{Results}
\label{subsubsec:results}
Let $\partial \Bers{x_0}$ be the Bers boundary of $\Bers{x_0}\cong \teich_{g,m}$ and $\partial^{ue}\Bers{x_0}$ the subset of $\partial \Bers{x_0}$ which consists of totally degenerate groups without APT whose ending laminations are the supports of minimal, filling and uniquely ergodic measured laminations.
We define  a function $\mathbb{P}$ on $\teich_{g,m}\times \teich_{g,m}\times \partial \Bers{x_0}$ by
\begin{equation}
\label{eq:Poisson-kernel}
\mathbb{P}(x,y,\varphi)=
\begin{cases}
{\displaystyle
\left(
\frac{\ext_x(F_\varphi)}{\ext_y(F_\varphi)}
\right)^{3g-3+m}}
&
(\varphi\in \partial^{ue}\Bers{x_0}),
\\
1
&
(\mbox{otherwise}),
\end{cases}
\end{equation}
where for $\varphi\in \partial^{ue} \Bers{x_0}$, $F_\varphi$ is the measured foliation corresponding to the measured lamination whose support is the ending lamination of the Kleinian manifold associated to $\varphi$, and $\ext_{x}(F)$ is the extremal length of a measured foliation $F$ on $x\in \teich_{g,m}$ (cf. \S\ref{subsec:extremal-length}).

The main result of this paper is as follows.

\begin{theorem}[Poisson integral formula]
\label{thm:Poisson-integral-formula}
Let $V$ be a continuous function on the Bers compactification $\overline{\Bers{x_0}}$ which is pluriharmonic on $\Bers{x_0}\cong \teich_{g,m}$.
Then
\begin{equation}
\label{eq:main-Poisson-integral-formula}
V(x)=\int_{\partial \Bers{x_0}}V(\varphi)
\mathbb{P}(x_0,x,\varphi)
d\pushThursMBers_{x_0}(\varphi),
\end{equation}
where $\pushThursMBers_{x_0}$ is the probability measure on $\partial\Bers{x_0}$ defined as the pushforward measure of the Thurston measure on the space $\mathcal{PMF}$ of projective measured foliations associated with $x_0$.
Especially, the function \eqref{eq:Poisson-kernel} is the Poisson kernel for pluriharmonic functions on Teichm\"uller space.
\end{theorem}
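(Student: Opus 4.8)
The plan is to combine the abstract representation theorem of Demailly's pluripotential theory with a geometric identification of the pluriharmonic measure as a pushforward of the Thurston measure, and then to read off $\mathbb{P}$ as a Radon--Nikodym derivative. Throughout I work in the bounded hyperconvex domain $\Omega=\Bers{x_0}$, and I set $n=3g-3+m$ for the complex dimension of $\teich_{g,m}$. For each pole $x\in\Omega$ let $g_x$ be the pluricomplex Green function with logarithmic pole at $x$ and let $\mu_x$ denote the associated Demailly pluriharmonic measure on $\partial\Bers{x_0}$; both exist by the cited results of Demailly. The theorem will follow once I (i) produce an abstract Poisson formula $V(x)=\int V\,d\mu_x$ at every pole, (ii) identify $\mu_x$ with the pushforward Thurston measure normalized at $x$, and (iii) compute $d\mu_x/d\mu_{x_0}$.

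For step (i) I would invoke the Demailly--Lelong--Jensen representation for the exhaustion $g_x$: every $u$ that is plurisubharmonic and continuous on $\overline{\Bers{x_0}}$ satisfies $u(x)=\int_{\partial\Bers{x_0}}u\,d\mu_x$ plus a correction that is a sum of integrals each carrying a factor $dd^c u$. Since the given $V$ is pluriharmonic, $dd^c V=0$, so the correction vanishes and the representation collapses to
\[
V(x)=\int_{\partial\Bers{x_0}}V\,d\mu_x\qquad(x\in\Bers{x_0}).
\]
Applying this at both $x$ and $x_0$ reduces everything to understanding $\mu_x$ and its density against $\mu_{x_0}$.

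Step (ii) is the crux. Here I would use the Krushkal formula $g_x=\log\tanh d_T(x,\cdot)$, where $d_T$ is the Teichm\"uller distance, together with the boundary behaviour of extremal length along the uniquely ergodic locus, to evaluate the boundary Monge--Amp\`ere mass $\mu_x=(dd^c g_x)^n$. The conclusion I aim for is that $\mu_x$ is carried by $\partial^{ue}\Bers{x_0}$ and coincides there with the cone measure $\pushThursMBers_x$ obtained by intersecting the Thurston measure $\ThursM$ on $\mathcal{MF}$ with the extremal-length unit ball $\{\,F:\ext_x(F)\le1\,\}$ and projecting radially to $\mathcal{PMF}\cong\partial^{ue}\Bers{x_0}$. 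Two structural facts keep this clean: the non-uniquely-ergodic foliations form a $\ThursM$-null set (Masur), so $\pushThursMBers_{x_0}$ gives full mass to $\partial^{ue}\Bers{x_0}$ and the value $1$ of the kernel off this locus is immaterial; and on $\partial^{ue}\Bers{x_0}$ the Bers, Gardiner--Masur and $\mathcal{PMF}$ boundary models are pinned down by the single functional $F\mapsto\ext_\cdot(F_\varphi)$.

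Granting the identification $\mu_x=\pushThursMBers_x$, step (iii) is a homogeneity computation. Describing $\pushThursMBers_x$ through the cross-section $\{\ext_x(F)=1\}$, the homogeneity $\ext_x(tF)=t^2\ext_x(F)$ places this cross-section over a projective class $[F_\varphi]$ at radius $\ext_x(F_\varphi)^{-1/2}$, while $\ThursM$ is homogeneous of degree $2n$. Thus the mass that $\pushThursMBers_x$ assigns to a thin angular set is proportional to the $2n$-th power of the corresponding truncation radius, the angular factor being independent of the base point, so that the base-point dependence cancels in the ratio and
\[
\frac{d\mu_x}{d\mu_{x_0}}(\varphi)=\frac{d\pushThursMBers_x}{d\pushThursMBers_{x_0}}(\varphi)=\left(\frac{\ext_x(F_\varphi)^{-1/2}}{\ext_{x_0}(F_\varphi)^{-1/2}}\right)^{2n}=\left(\frac{\ext_{x_0}(F_\varphi)}{\ext_x(F_\varphi)}\right)^{n}=\mathbb{P}(x_0,x,\varphi)
\]
for $\mu_{x_0}$-a.e.\ $\varphi$. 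Substituting $d\mu_x=\mathbb{P}(x_0,x,\cdot)\,d\mu_{x_0}$ and $\mu_{x_0}=\pushThursMBers_{x_0}$ into the formula from step (i) yields \eqref{eq:main-Poisson-integral-formula}, and the concluding assertion that $\mathbb{P}$ is the Poisson kernel is then immediate. I expect the genuine difficulty to lie entirely in step (ii): it demands sharp control of $(dd^c g_x)^n$ near $\partial\Bers{x_0}$, where $g_x$ degenerates, and a careful matching of the three boundary models on the uniquely ergodic locus.
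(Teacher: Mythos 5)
Your overall architecture (Demailly's Lelong--Jensen representation, identification of the pluriharmonic measure with the pushforward Thurston measure, then a Radon--Nikodym computation) is indeed the paper's architecture, and your step (i) is exactly how the paper begins. But step (ii), which you yourself call the crux, is not proved: you state the desired conclusion and gesture at ``sharp control of $(dd^cg_x)^{\convgenus}$ near $\partial\Bers{x_0}$,'' which is not an argument, and it is also not the route the paper takes --- no direct evaluation of the boundary measure of the Green function is ever performed, and there is no indication it can be. What the paper actually proves is only a one-sided estimate, the absolute continuity $\harmonicmeasure^{x_0}_x\ll\pushThursMBers_x$ (Theorem \ref{thm:support-PH-measure}): it compares the level-set measures of the PSH exhaustions $\extrecip_{F,G}=\max\{-1/\ext_{\cdot}(F),-1/\ext_{\cdot}(G)\}$ with the horosphere measures $\HSmeas_{F,R}$, and hence with $\PThursM^x$ (Propositions \ref{prop:comparison} and \ref{prop:local-comparison}), using Demailly's comparison theorem (Proposition \ref{prop:Demally1}) and Bedford--Taylor (Proposition \ref{prop:thm-BT}). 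Equality is then forced by a symmetry argument that is entirely absent from your proposal: writing $d\harmonicmeasure^{x_0}_x=\Lambda_x\,d\pushThursMBers_x$, one shows both families of measures are $\mcg_{g,m}$-equivariant (Lemmas \ref{lem:mcg-PH} and \ref{lem:Th-points}) and transform under change of pole by the same kernel $\mathbb{P}$ --- for the pluriharmonic measures this is Theorem \ref{thm:Poisson-kernel}, obtained from Proposition \ref{prop:Demally1} together with the Gardiner--Masur asymptotics of the Krushkal Green function --- whence $\Lambda_x$ is $\mcg_{g,m}$-invariant; Masur's ergodicity of the $\mcg_{g,m}$-action on $\mathcal{PMF}$ then makes $\Lambda_x$ a.e.\ constant, and the constant is $1$ because both measures are probabilities (Theorem \ref{thm:PHmeasure-is-Thurston-measure}). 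Without this ergodicity mechanism, or a genuine substitute for it, your proposal does not constitute a proof.

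There is also a concrete error in step (iii): the normalizations do not cancel. Both $\pushThursMBers_x$ and $\pushThursMBers_{x_0}$ are cone measures divided by the Hubbard--Masur volumes ${\rm Vol}_{Th}(x)$ and ${\rm Vol}_{Th}(x_0)$, so the homogeneity computation actually gives $d\pushThursMBers_x/d\pushThursMBers_{x_0}=\bigl({\rm Vol}_{Th}(x_0)/{\rm Vol}_{Th}(x)\bigr)\bigl(\ext_{x_0}(F_\varphi)/\ext_x(F_\varphi)\bigr)^{\convgenus}$, cf.\ \eqref{eq:push-forward-marking_change}. The extra factor is identically $1$ precisely because the Hubbard--Masur function is constant --- but that is Dumas' theorem, which the paper pointedly does \emph{not} assume and instead derives as a consequence of the Poisson formula (Corollary \ref{coro:Hubbard-Masur-constant}). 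So your ``the base-point dependence cancels'' is unjustified as written: you must either cite Dumas' result, or, as the paper does, get the change-of-pole identity for the pluriharmonic measures directly from the ratio of Green functions (Theorem \ref{thm:Poisson-kernel}) rather than from Thurston-measure homogeneity. (A minor further point: $\mu_x$ is not ``$(dd^cg_x)^{\convgenus}$,'' which equals $(2\pi)^{\convgenus}\delta_x$; it is the weak limit of Demailly's level-set measures as the level tends to $0$.)
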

See \S\ref{sec:Thurston-measure} for the precise definition of the probability measure $\pushThursMBers_{x_0}$ on $\partial\Bers{x_0}$.
Theorem \ref{thm:Poisson-integral-formula} follows from the \emph{Green formula for plurisubharmonic functions} stated in Theorem \ref{thm:Poisson-integral-formula-sub}.
We see that the measure $d\pushThursMBers_{x}=\mathbb{P}(x_0,x,\cdot)d\pushThursMBers_{x_0}$ coincides with Demailly's pluriharmonic measure of $x\in \teich_{g,m}$ (cf. Theorem \ref{thm:PHmeasure-is-Thurston-measure}). The formula \eqref{eq:main-Poisson-integral-formula} is rephrased as the integrable representation of integral functions on $\mathcal{PMF}$ (cf. \S\ref{subsec:integral-representation-with-pmf}).

Following \cite[\S2.3.1]{MR2913101},
we define the \emph{cocycle function}
by
$$
\beta(x,y,\varphi)=
\begin{cases}
{\displaystyle
\frac{1}{2}(\log\ext_x(F_\varphi)-\log\ext_y(F_\varphi))
}
&
(\varphi\in \partial^{ue}\Bers{x_0}),
\\[2mm]
0
&
(\mbox{otherwise})
\end{cases}
$$
for $(x,y,\varphi)\in \teich_{g,m}\times \teich_{g,m}\times \partial \Bers{x_0}$.
The cocycle function $\beta$ is also understood as the \emph{horofunction} for the Teichm\"uller distance when $\varphi\in \partial^{ue}\Bers{x_0}$ (cf. \cite{MR3289706}. See also \cite{MR3289702} and \cite{MR3278905}).
The Poisson integral formula \eqref{eq:main-Poisson-integral-formula} is rewritten as
\begin{equation}
\label{eq:main-Poisson-integral-formula-Buseman}
V(x)=\int_{\partial \Bers{x_0}}V(\varphi)
e^{-(6g-6+2m)\beta(x,x_0,\varphi)}
d\pushThursMBers_{x_0}(\varphi).
\end{equation}
The formulation \eqref{eq:main-Poisson-integral-formula-Buseman} implies that Demailly's pluriharmonic measures $\{\pushThursMBers_{x}\}_{x\in \teich_{g,m}}$ are thought of as the conformal density of dimension $6g-6+2m$ on $\partial\Bers{x_0}$ (cf. \cite[\S2.3.1]{MR2913101} and \cite{MR2495764}. See also Remark \ref{remark:PS-measure}).
This observation is in complete analogy with that in the case of the hyperbolic spaces (cf. \S\ref{subsec:Case_tori} See also \cite{ProceedingsThurstoni2018} and \cite[Theorem B]{MR1689341}).


In the proof of Theorem \ref{thm:Poisson-integral-formula}, we realize Teichm\"uller space as a convex cone in the $6g-6+2m$ dimensional Euclidean space (cf. \S\ref{sec:holomorphic-coordinate-ext}).
We give an explicit presentation of the complex structure on the convex cone which makes the realization biholomorphic (cf. \eqref{eq:ComplexStructure} and Proposition \ref{prop:holomorphic-chart}). The $\partial$ and $\overline{\partial}$-derivatives and the Levi form of extremal lengths in Proposition \ref{prop:derivatives-extremallengthfunction} are calculated with this complex structure (cf.  \cite{Towards-complex-miyachi2017}). An advantage of this realization is that the Monge-Amp\`ere measure of the extremal length function is simply represented (cf. \eqref{ddcNExt}).
Riera \cite{MR3394254} described the complex structure on Teichm\"uller space in terms of the Fenchel-Nielsen coordinates. Our presentation is thought of as a counterpart of Riera's one.

Schwarz \cite{MR1579542} studied the behavior of the Poisson integral of integrable functions on the unit circle around points where given functions are continuous (see also \cite[Theorem IV.2]{MR0114894}).
We will observe an analogy with Schwarz's theorem as follows (cf. \S\ref{sec:boundary-behaviorPI}).

\begin{theorem}[Schwarz type theorem]
\label{thm:boundary-behavior}
Let $V$ be an integrable function on $\partial\Bers{x_0}$ with respect to $\pushThursMBers_{x_0}$.
When $V$ is continuous at $\varphi_0\in \partial^{ue}\Bers{x_0}$,
$$
\lim_{\teich_{g,m}\ni x\to \varphi_0}\int_{\partial \Bers{x_0}}V(\varphi)
\mathbb{P}(x_0,x,\varphi)
d\pushThursMBers_{x_0}(\varphi)=V(\varphi_0).
$$
\end{theorem}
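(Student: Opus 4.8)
The plan is to treat the kernel $\mathbb{P}(x_0,x,\cdot)$ as an approximate identity on $\partial\Bers{x_0}$ and to run the standard approximate-identity argument, the two analytic ingredients being positivity together with total mass one of the kernel against $\pushThursMBers_{x_0}$, and uniform decay of the kernel away from $\varphi_0$. First I would reduce to $\partial^{ue}\Bers{x_0}$: since $\pushThursMBers_{x_0}$ is the pushforward of the Thurston measure, and uniquely ergodic foliations carry full Thurston measure, the complement of $\partial^{ue}\Bers{x_0}$ is $\pushThursMBers_{x_0}$-null. Hence throughout I may use the ratio formula $\mathbb{P}(x_0,x,\varphi)=(\ext_{x_0}(F_\varphi)/\ext_x(F_\varphi))^{3g-3+m}$ and ignore the locus where $\mathbb{P}=1$. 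Positivity is immediate, and total mass one follows by applying Theorem~\ref{thm:Poisson-integral-formula} to the constant $V\equiv 1$ (equivalently, $\pushThursMBers_x$ is a probability measure). Finally, $\mathbb{P}$ is invariant under $F_\varphi\mapsto\lambda F_\varphi$, so I am free to normalize $\ext_{x_0}(F_\varphi)=1$ in every estimate below.

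With these facts I would write
\[
\int_{\partial\Bers{x_0}}V(\varphi)\mathbb{P}(x_0,x,\varphi)\,d\pushThursMBers_{x_0}(\varphi)-V(\varphi_0)
=\int_{\partial\Bers{x_0}}\bigl(V(\varphi)-V(\varphi_0)\bigr)\mathbb{P}(x_0,x,\varphi)\,d\pushThursMBers_{x_0}(\varphi),
\]
and split the integral over an open Bers-boundary neighborhood $U$ of $\varphi_0$ and its complement. On $U$, continuity of $V$ at $\varphi_0$ gives $|V(\varphi)-V(\varphi_0)|<\epsilon$, so that piece is at most $\epsilon\int\mathbb{P}\,d\pushThursMBers_{x_0}=\epsilon$. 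The complementary piece is at most $\bigl(\sup_{\varphi\notin U}\mathbb{P}(x_0,x,\varphi)\bigr)\int_{\partial\Bers{x_0}}|V(\varphi)-V(\varphi_0)|\,d\pushThursMBers_{x_0}(\varphi)$, and since $V\in L^1(\pushThursMBers_{x_0})$ the last integral is finite; so everything reduces to proving $\sup_{\varphi\notin U}\mathbb{P}(x_0,x,\varphi)\to 0$ as $x\to\varphi_0$.

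For this uniform decay I would invoke the standard inequality $i(F_\varphi,F_{\varphi_0})^2\le \ext_x(F_\varphi)\,\ext_x(F_{\varphi_0})$, giving $\ext_x(F_\varphi)\ge i(F_\varphi,F_{\varphi_0})^2/\ext_x(F_{\varphi_0})$ and hence, with $\ext_{x_0}(F_\varphi)=1$,
\[
\mathbb{P}(x_0,x,\varphi)=\ext_x(F_\varphi)^{-(3g-3+m)}
\le\left(\frac{\ext_x(F_{\varphi_0})}{i(F_\varphi,F_{\varphi_0})^2}\right)^{3g-3+m}.
\]
Two facts then close the estimate. First, $\ext_x(F_{\varphi_0})\to 0$ as $x\to\varphi_0$, since $F_{\varphi_0}$ is the foliation of the ending lamination of the degenerating surface and its extremal length is pinched to zero along sequences tending to $\varphi_0$. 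Second, there is $c_U>0$ with $i(F_\varphi,F_{\varphi_0})\ge c_U$ for all $\varphi\notin U$: otherwise a sequence $\varphi_n\notin U$ with $i(F_{\varphi_n},F_{\varphi_0})\to 0$ would, after passing to a subsequence converging in the compact unit sphere $\{F:\ext_{x_0}(F)=1\}$ of the space $\mathcal{MF}$ of measured foliations, yield a limit $F_\infty$ with $\ext_{x_0}(F_\infty)=1$ and $i(F_\infty,F_{\varphi_0})=0$; unique ergodicity of $F_{\varphi_0}$ forces $[F_\infty]=[F_{\varphi_0}]$ in $\mathcal{PMF}$, whence $\varphi_n\to\varphi_0$ in the Bers boundary, contradicting $\varphi_n\notin U$. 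Together these give $\sup_{\varphi\notin U}\mathbb{P}(x_0,x,\varphi)\le(\ext_x(F_{\varphi_0})/c_U^2)^{3g-3+m}\to 0$.

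The main obstacle is the last continuity step: that $\mathcal{PMF}$-convergence $[F_{\varphi_n}]\to[F_{\varphi_0}]$ forces Bers-boundary convergence $\varphi_n\to\varphi_0$. This is precisely the (in general delicate) continuity of the correspondence between the Bers boundary and the space of ending laminations, which behaves well exactly at uniquely ergodic points such as $\varphi_0\in\partial^{ue}\Bers{x_0}$; I would isolate it as a separate lemma resting on the unique ergodicity and filling property of $F_{\varphi_0}$. The pinching statement $\ext_x(F_{\varphi_0})\to 0$ is the other geometric input, and should be read off from the behavior of the extremal length of the ending lamination near the corresponding Bers boundary point.
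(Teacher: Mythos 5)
Your approximate-identity skeleton is exactly the paper's: subtract $V(\varphi_0)$, use that $\mathbb{P}(x_0,x,\cdot)\,d\pushThursMBers_{x_0}=d\pushThursMBers_{x}$ is a probability measure, split into a neighborhood of $\varphi_0$ and its complement, and reduce to uniform decay of the kernel off that neighborhood; moreover your compactness/unique-ergodicity argument giving $i(F_\varphi,F_{\varphi_0})\ge c_U$ off $U$ is sound and is the analogue of Lemma \ref{lem:neighborhood-H} (the continuity of $\Xi_{x_0}$ you flag as delicate is available from \eqref{eq:curve-complex-mininal-bers}). The genuine gap is the other geometric input: the pinching claim that $\ext_x(F_{\varphi_0})\to 0$ as $x\to\varphi_0$ in the Bers compactification is \emph{false} for unrestricted approach, and your far-field bound collapses with it. Already for $(g,m)=(1,1)$, identify $\teich_{1,1}\cong\mathbb{H}$ and $\overline{\Bers{x_0}}$ with the closed disk, so that $x=\tau\to\varphi_0$ means $\tau\to u_0\in\mathbb{R}\setminus\mathbb{Q}$ in the Euclidean sense; by the formulas in \S\ref{subsec:phenomena-by-averaging} one has $\ext_\tau(F_{u_0})=|u_0-\tau|^2/{\rm Im}(\tau)$, and the tangential sequence $\tau_n=u_0+n^{-1}+\sqrt{-1}\,n^{-3}$ converges to $\varphi_0$ in $\overline{\Bers{x_0}}$ while $\ext_{\tau_n}(F_{u_0})\asymp n\to\infty$. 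The extremal length of the ending foliation is pinched only along radial approaches (e.g.\ Teichm\"uller rays, cf.\ Proposition \ref{prop:Teichmuller-limit}) or non-tangential ones, so your argument proves at best non-tangential convergence, which is strictly weaker than the theorem. Indeed your Minsky-inequality bound $\mathbb{P}(x_0,x,\varphi)\le\left(\ext_x(F_{\varphi_0})/i(F_\varphi,F_{\varphi_0})^2\right)^{\convgenus}$ is intrinsically lossy: in the disk case it bounds the Poisson kernel ${\rm Im}(\tau)/|u-\tau|^2$ by $|u_0-\tau|^2/({\rm Im}(\tau)\,|u-u_0|^2)$, which does not tend to zero as $\tau\to u_0$ tangentially even though the true kernel does.

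The repair -- and this is what the paper does in Lemma \ref{lem:Poisson-kernel} -- is to take $e^{-2\convgenus d_T(x_0,x)}$ as the decay parameter, since $d_T(x_0,x)\to\infty$ however $x$ approaches the Bers boundary. The paper shows that for all $x$ in a suitable Bers-neighborhood of $\varphi_0$ one has the \emph{uniform} lower bound $e^{-2d_T(x_0,x)}\ext_x(G)\ge\delta^2/2$ over all $G\in\mathcal{SMF}^{ue}_{x_0}$ with $i(G,F_{\varphi_0})\ge\delta$, whence $\mathbb{P}(x_0,x,\varphi_G)\le(2/\delta^2)^{\convgenus}e^{-2\convgenus d_T(x_0,x)}$ off $N(\varphi_0;\delta)$. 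The substance of that lower bound comes not from Minsky's inequality \eqref{eq:minsky-inequality} but from extremal length geometry on the Gardiner--Masur compactification: by Claim \ref{claim:3-1}, convergence $x\to\varphi_0$ in the Bers compactification forces convergence $x\to[F_{\varphi_0}]$ in the Gardiner--Masur compactification, and then continuity of the extended intersection function $i_{x_0}$ (cf.\ \eqref{eq:ex-geo2}, \eqref{eq:ex-geo3}) gives, after normalizing extremal lengths at $x_0$ to be $1$, that $e^{-2d_T(x_0,x)}\ext_x(G)\to i(F_{\varphi_0},G_0)^2\ge\delta^2$ along any putative counterexample sequence, a contradiction. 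So to complete your proof you must replace the pinching step by this Gardiner--Masur argument (or an equivalent substitute producing decay in $d_T(x_0,x)$ rather than in $\ext_x(F_{\varphi_0})$); the remainder of your outline then goes through verbatim.
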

As a corollary, we deduce
\begin{corollary}[Holomorphic extension]
\label{thm:boundary-value}
Let $V$ be a complex-valued integrable function on $\partial\Bers{x_0}$ with respect to $\pushThursMBers_{x_0}$.
Suppose that $V$ is continuous on $\partial^{ue}\Bers{x_0}$.
If
\begin{equation}
\label{eq:CR}
\int_{\partial \Bers{x_0}}V(\varphi)
\overline{\partial}\mathbb{P}(x_0,\,\cdot\,,\varphi)
d\pushThursMBers_{x_0}(\varphi)=0
\end{equation}
on $\teich_{g,m}$ as $(0,1)$-forms,
then the Poisson integral
$$
P[V](x)=\int_{\partial \Bers{x_0}}V(\varphi)
\mathbb{P}(x_0,x,\varphi)
d\pushThursMBers_{x_0}(\varphi)
$$
is a holomorphic function on $\teich_{g,m}$ and satisfies
$$
\lim_{\teich_{g,m}\ni x\to \varphi_0}P[V](x)=V(\varphi_0)
\quad (\varphi_0\in \partial^{ue}\Bers{x_0}).
$$
\end{corollary}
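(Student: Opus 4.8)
The plan is to separate the two conclusions: that $P[V]$ is holomorphic, and that it attains the boundary value $V(\varphi_0)$ at each $\varphi_0\in\partial^{ue}\Bers{x_0}$. The boundary behaviour is immediate from the Schwarz type theorem. Indeed, $V$ is continuous on $\partial^{ue}\Bers{x_0}$, hence continuous at any fixed $\varphi_0\in\partial^{ue}\Bers{x_0}$, so Theorem \ref{thm:boundary-behavior}, applied to the real and imaginary parts of $V$ (the kernel $\mathbb{P}(x_0,x,\varphi)$ is real and nonnegative and $V\mapsto P[V]$ is $\mathbb{C}$-linear), gives $\lim_{\teich_{g,m}\ni x\to\varphi_0}P[V](x)=V(\varphi_0)$. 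Thus the real content is the holomorphicity of $P[V]$, which I would deduce from the hypothesis \eqref{eq:CR} by differentiating under the integral sign.

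First I would record the pointwise regularity of the kernel: for each fixed $\varphi\in\partial^{ue}\Bers{x_0}$ the map $x\mapsto\mathbb{P}(x_0,x,\varphi)=(\ext_{x_0}(F_\varphi)/\ext_x(F_\varphi))^{3g-3+m}$ is smooth on $\teich_{g,m}$, by the smooth dependence of $\ext_x(F)$ on $x$ underlying the holomorphic chart constructed in \S\ref{sec:holomorphic-coordinate-ext}; for $\varphi\notin\partial^{ue}\Bers{x_0}$ the kernel is identically $1$, so its $x$-derivatives vanish there. To pass $\overline{\partial}$ through the integral I need a dominating function, locally uniform in $x$ and independent of $\varphi$. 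Fix a compact set $K\subset\teich_{g,m}$. Kerckhoff's formula for the Teichm\"uller distance gives $\ext_{x_0}(F)/\ext_x(F)\le e^{2d_{\teich_{g,m}}(x_0,x)}$ for every measured foliation $F$, so $0\le\mathbb{P}(x_0,x,\varphi)\le e^{2(3g-3+m)\,d_{\teich_{g,m}}(x_0,x)}$ is bounded uniformly over $\varphi$ and over $x\in K$. Moreover, by Gardiner's first variation formula together with the Hubbard--Masur normalization $\ext_x(F)=\|q_{x,F}\|$, every first-order $x$-derivative of $\log\ext_x(F)$ is bounded by a constant multiple of the norm of the deformation direction, uniformly in $F$ (the logarithmic derivative is scale-invariant, hence depends only on the projective class of $F$). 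Since $\overline{\partial}_x\mathbb{P}=-(3g-3+m)\,\mathbb{P}\,\overline{\partial}_x\log\ext_x(F_\varphi)$, the two estimates bound $\overline{\partial}_x\mathbb{P}(x_0,\cdot,\varphi)$, and likewise every first derivative, on $K$ by a constant, which is integrable against the probability measure $\pushThursMBers_{x_0}$.

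With this domination the dominated convergence theorem justifies differentiating under the integral, so $P[V]\in C^1(\teich_{g,m})$ and
\begin{equation*}
\overline{\partial}P[V](x)=\int_{\partial\Bers{x_0}}V(\varphi)\,\overline{\partial}\mathbb{P}(x_0,x,\varphi)\,d\pushThursMBers_{x_0}(\varphi).
\end{equation*}
By the hypothesis \eqref{eq:CR} the right-hand side vanishes as a $(0,1)$-form, so $\overline{\partial}P[V]=0$ on $\teich_{g,m}$. A continuously differentiable function on a domain in $\mathbb{C}^{3g-3+m}$ that is annihilated by $\overline{\partial}$ is holomorphic, which gives the first assertion; combined with the boundary limit of the first paragraph this proves the corollary.

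I expect the main obstacle to be precisely the locally uniform domination of the $x$-derivatives of $\mathbb{P}(x_0,\cdot,\varphi)$ as $\varphi$ ranges over $\partial^{ue}\Bers{x_0}$. The pointwise smoothness of the kernel and the Kerckhoff comparison are standard, but the interchange of $\overline{\partial}$ and the integral is legitimate only because the variation of $\ext_x(F_\varphi)$ can be controlled uniformly in the boundary parameter $\varphi$; once this is secured, the holomorphicity is a formal consequence of \eqref{eq:CR} and the boundary value is a direct citation of Theorem \ref{thm:boundary-behavior}.
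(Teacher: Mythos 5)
Your proposal is correct and takes essentially the same route the paper intends: the paper states this result as an immediate corollary, with the boundary limit coming from Theorem \ref{thm:boundary-behavior} and the holomorphicity read off from \eqref{eq:CR} by differentiating under the integral sign (just as the paper itself does in \S\ref{subsubsec:presentation-differential} via Gardiner's formula). Your domination argument --- the Kerckhoff bound $\mathbb{P}(x_0,x,\varphi)\le e^{2(3g-3+m)d_T(x_0,x)}$ together with the scale-invariant bound on $\overline{\partial}_x\log\ext_x(F_\varphi)$ from Gardiner's formula and $\|q_{F,x}\|=\ext_x(F)$ --- supplies precisely the justification for the interchange that the paper leaves implicit, so it is a faithful completion of the paper's argument rather than a departure from it.
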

In the complex function theory on $\teich_{g,m}$, Equation \eqref{eq:CR} will play as the \emph{homogeneous tangential Cauchy-Riemann equation} (in the distribution sense) for the boundary functions of holomorphic functions on Teichm\"uller space, which characterizes the boundary functions of holomorphic functions (cf. \cite{MR1211412}). 

Notice from Theorem \ref{thm:Poisson-integral-formula} that continuous functions on $\partial\Bers{x_0}$ with holomorphic extensions on $\teich_{g,m}$ satisfy \eqref{eq:CR}.
Unlike in the case of the unit disk, it is not clear whether the Poisson integral 
$$
\int_{\partial \Bers{x_0}}V(\varphi)
\mathbb{P}(x_0,x,\varphi)
d\pushThursMBers_{x_0}(\varphi)
$$
is pluriharmonic on $\teich_{g,m}$ for any integrable function $V$ on $\partial\Bers{x_0}$ (cf. Remark \ref{remark:Levi-Poisson}).

\subsection{Applications}

Applying the Poisson integral formula \eqref{eq:main-Poisson-integral-formula}
to $V\equiv 1$, we deduce that the Hubbard-Masur function is constant.
Namely, the volume of the unit ball in $\mathcal{MF}$ with respect to the extremal  length depends only on the topological type of $\Sigma_{g,m}$ (cf. Corollary \ref{coro:Hubbard-Masur-constant}).
This is first proved by Mirzakhani and Dumas (cf. \cite[Theorem 5.10]{MR3413977}).

The Poisson integral formula is thought of  as a generalization of the mean value theorem. Namely, the value of a (pluri)harmonic function in the domain is the average of the boundary value with harmonic measures. Applying the Poisson integral formula \eqref{eq:main-Poisson-integral-formula}, we will give the vector-valued (quadratic differential-valued) measures on $\mathcal{PMF}$ which describe the $\partial$ and $\overline{\partial}$-differentials of pluriharmonic functions on $\teich_{g,m}$ which is continuous on the Bers closure (cf. \S\ref{subsubsec:presentation-differential}). Applying this description to the trace functions of boundary groups of the Bers slice, we will represent Wolpert's quadratic differentials $\Theta_{\gamma,x}$ which corresponds to the differentials of the hyperbolic lengths of closed geodesics $\gamma$ in terms of the Hubbard-Masur differentials $q_{F,x}$ ($F\in \mathcal{MF}$) by the averaging procedure as follows.
\begin{theorem}[Representation of Wolpert's differentials]
\label{thm:Wolpert-Hubbard-Masur}
For $x\in \teich_{g,m}$ and $\gamma\in \pi_1(\Sigma_{g,m})$, we have
\begin{equation}
\label{eq:Wolpert-Hubbard-Masur}
\Theta_{\gamma,x}=\frac{\convgenus }{2\sinh(\ell_\gamma(x))}\int_{\mathcal{PMF}^{mf}}{\rm tr}^2\left(\rho_{\varphi_{[F],x}}(\gamma)\right) \,\frac{q_{F,x}}{\|q_{F,x}\|}
d\PThursM^{x}([F]).
\end{equation}
\end{theorem}
The definitions of the symbols in the formula \eqref{eq:Wolpert-Hubbard-Masur} and the proof of Theorem \ref{thm:Wolpert-Hubbard-Masur} can be found in \S\ref{subsubsec:Wolpert_differentials}.
The representation \eqref{eq:Wolpert-Hubbard-Masur} gives an interaction between the $L^2$-geometry on Teichm\"uller space (Weil-Petersson Riemannian-K\"ahlerian geometry) and the $L^1$ or $L^\infty$-geometry (Teichm\"uller Finsler geometry) on Teichm\"uller space.

\subsection{History, Motivation and Future}
\label{subsec:History}
The complex analytic structure on Teichm\"uller space was described by Ahlfors with the variational formula of the period matrix (cf. \cite{MR0124486}.
See also \cite{MR0213543}).
Bers \cite{MR0130972} realized Teichm\"uller space as a bounded domain,
called the \emph{Bers slice}, in a finite dimensional complex Banach space.
Teichm\"uller space has rich  and interesting properties in the complex analytical aspect.
For instance, Teichm\"uller space is Stein (Bers-Ehrenpreis \cite{MR0168800});
the holomorphic automorphism group is (essentially) isomorphic to the mapping class group (Royden \cite{MR0288254});
the moduli space is K\"ahler hyperbolic (McMullen \cite{MR1745010});
the Kobayashi distance coincides with the Kobayashi distance (Royden \cite{MR0288254} and Earle-Kra \cite{MR0430319}) but it does not coincide with the Carath\'eodory distance (Markovic \cite{MR3761105}).

\subsubsection*{A naive problem behind our research}
Any holomorphic invariant of (marked) Riemann surfaces or Kleinian groups is thought of as a holomorphic function on Teichm\"uller space, and the algebra of holomorphic functions characterizes Teichm\"uller space as a complex manifold up to complex conjugation (cf. \cite{MR0185143}). A fundamental problem behind this research is:
\begin{problem}
\label{problem:main}
What are reasonable geometric objects which represent holomorphic functions on Teichm\"uller spaces?
\end{problem}
The Teichm\"uller space is known to be the universal space of holomorphic families in the sense that holomorphic mappings \emph{into} Teichm\"uller space admit geometric interpretations,
holomorphic families of Riemann surfaces
(cf. \cite{MR0018762}. See also \cite{MR3288665} for a commentary and an English translation of \cite{MR0018762}).

\subsubsection*{Why Bers slices?}
%
There are many realizations of the Teichm\"uller space as domains in the Euclidean space which are base points free (e.g. \cite{MR627684}, \cite{MR2964074}, \cite{MR346149}). One may ask why we think the Bers slice. 

The Bers slice is mysterious: The Bers slice is defined by a transcendental manner, and is deeply related to the theory of univalent functions (cf.\S\ref{subsec:Bers-slice}). The Bers slices depend the base point (cf. \cite{MR1037141}). The Bers boundary is conjectured to be fractal and self-similar at the fixed point with respect to the pseudo-Anosov mapping class action (See Figure \ref{fig:bers-slice-sqr-00.25}. See \cite{MR2667553}, \cite{MR2229385} and \cite[Problem 7 in Chapter 10]{MR1401347}). To approach these conjectures, it seems necessary to understand a detailed relation between the holomorphic (geometric) structure and the topological aspect of the Teichm\"uller theory around the boundary. Despite such interesting problems are posed, to the author's knowledge, there is less mathematical tools for investigating the holomorphic structure around the Bers boundary, and it is expected to develop the complex analytical aspect of the Teichm\"uller theory to clarify the relation. Actually, Problem \ref{problem:main} is motivated from these conjectures. 
\begin{figure}[t]
\includegraphics[height = 5cm]{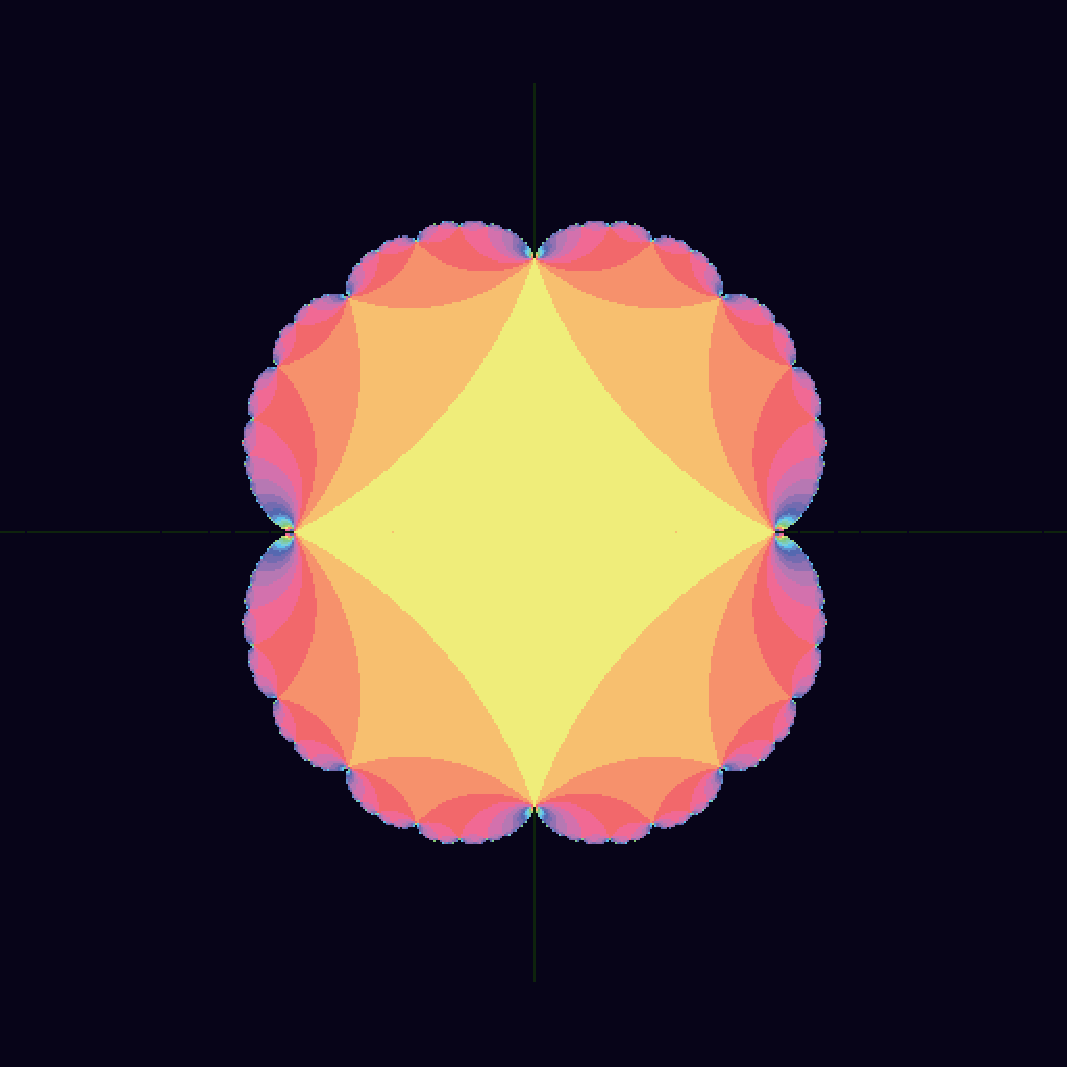}
\caption{The Bers slice of once punctured tori whose base point is the square torus. Courtesy of Professor Yasushi Yamashita (cf. \cite{MR2229385}).}
\label{fig:bers-slice-sqr-00.25}
\end{figure}
%
%
%
%
%

The Bers slice is itself an interesting bounded domain in view of Complex analysis: It is hyperconvex  (Krushkal \cite{MR1119946}), and its closure is polynomially convex (Shiga \cite{MR766636}, Deroin-Dujardan \cite{MR3707286}). Indeed, due to the polynomially convexity, \emph{almost all} of holomorphic functions on $\teich_{g,m}$ are represented by the Poisson integral formula \eqref{eq:main-Poisson-integral-formula} in the sense that any holomorphic function on $\teich_{g,m}$ is approximated by holomorphic functions on the ambient space of the Bers slice.

%
%

\subsubsection*{Complex analysis encounters the Thurston theory}
In a celebrated paper  \cite{MR2925381},
Brock, Canary and Minsky settled the ending lamination theorem.
The ending lamination theorem enables us to parametrize
the Bers boundary by topological invariants called the \emph{end-invariants},
and makes a strong connection 
between the complex analytical aspect in Teichm\"uller theory
and the Thurston theory (the topological aspect in Teichm\"uller theory)
(cf. \S\ref{sec:Kleinian-surface-groups-and-the-Bers-slice}).
Our research is based on sophisticated results in the theory of
Kleinian groups as well as Teichm\"uller theory.


To approach Problem \ref{problem:main}, we attempt to realize holomorphic functions as functions on spaces coming out from the topological aspect. The Bers boundary and the space of projective measured foliations are thought of as being essentially assembled from topological invariants by the ending lamination theorem. 
%

The extremal length functions, which appear in the Poisson kernel \eqref{eq:Poisson-kernel}, are thought of as the \emph{intersection number} between marked Riemann surfaces and measured foliations in Extremal length geometry (cf. \cite[Lemma 5.1]{ MR1231968}, \cite{MR1099913}, \cite{MR3278905} and \S\ref{subsec:ELG}). 
Thus,
the Poisson integral formula \eqref{eq:main-Poisson-integral-formula} and the homogeneous Cauchy-Riemann equation
\eqref{eq:CR}
are expected
to strengthen the connection 
between
the complex-analytical aspect
and
the topological aspect in Teichm\"uller theory,
and to develop
\emph{Complex analysis on Teichm\"uller space with Thurston theory}.


\subsection{About this paper}
This paper is organized as follows.
In \S\ref{sec:Toy_model}, we discuss the case of tori for a model case of our main theorem.  
From \S\ref{sec:Teichmuller-theory} to \S\ref{sec:Thurston-measure},
we recall basics and known results in Teichm\"uller theory.
In \S\ref{sec:holomorphic-coordinate-ext} and \S\ref{sec:measures-on-horosphere-monge-ampere-measure}, we recall and discuss the holomorphic coordinates associated to the extremal length functions of essentially complete measured foliations developed in \cite{Towards-complex-miyachi2017}, and the presentation of the Levi forms of the extremal length functions. 

The proof of Theorem \ref{thm:Poisson-integral-formula} is accomplished in the discussion from \S\ref{sec:pluriharmonic-measure-Thurston-measure} to \S\ref{sec:intrinsic-representation}.
In the proof, we will compare the Thurston measure on the unit sphere in $\mathcal{MF}$ in terms of extremal length functions with the measures defined on the pluricomplex Green function on the level set (cf. Proposition \ref{prop:local-comparison}). For the comparison, we adopt the reciprocals of extremal length functions as mediators (cf. \eqref{eq:comparison-green-dT}). We will prove Theorem \ref{thm:boundary-behavior} in \S\ref{sec:boundary-behaviorPI}.

\S\ref{sec:averaging-pmf}, we rephrase the integral representation \eqref{eq:main-Poisson-integral-formula} in terms of the integration on $\mathcal{PMF}$ and discuss the integral representation of the $\partial$ and $\overline{\partial}$-differentials of pluriharmonic functions on $\teich_{g,m}$ (cf. Corollary \ref{coro:integral-representation-PMF} and \eqref{eq:Diff-holo-functions}). The holomorphic quadratic differentials associated to the differentials of hyperbolic lengths of closed geodesics are represented by averaging the Hubbard-Masur differentials by the Thurston measure (cf. \eqref{eq:Wolpert} and Theorem \ref{thm:Wolpert-Hubbard-Masur}).

\section*{Acknowledgement}
The author would like to express his gratitude to anonymous referees for valuable comments. He also thanks Professor Ken'ichi Ohshika and Professor Athanase Papadopoulous for their constant encouragements.

%

\section{One dimensional cases}
\label{sec:Toy_model}
\subsection{Case of tori}
\label{subsec:Case_tori}
We check the correspondence in Table \ref{table:dictionary} for tori. We start with recalling the horofunction compactification of the Teichm\"uller space of tori. The horofunctions are presented with the extremal length. We will see the same conclusion holds for arbitrary $(g,m)$ in \S\ref{sec:ELG}.

Let $\Sigma_1$ be a (topolotical) torus and $A$ and $B$ are generators of the fundmanetal group $\pi_1(\Sigma_1)\cong H_1(\Sigma_1)$ such that the algebraic intersection number $A\cdot B$ is $+1$. As discussed above, the Teichm\"uller space $\teich_1$ is identified with $\mathbb{H}$ in the sense that any marked torus is biholomorphically equivalent to $X_\tau=\mathbb{C}/\Gamma_\tau$ where $\Gamma_\tau$ is the lattice on $\mathbb{C}$ generated by $1$ and $\tau\in \mathbb{H}$ and the marking $\Sigma_1\to X_\tau$ sends $A$ to $1$ and $B$ to $\tau$. We denote by $x_\tau\in \teich_1$ the marked tori associated to $\tau\in \mathbb{H}$.

The free homotopy classes of simple closed curves on a (topological) torus $\Sigma_1$ is enumerated by $\hat{\mathbb{Q}}=\mathbb{Q}\cup \{\infty\}$ ($\infty=\pm 1/0$). In our convention, the $p/q$-curve on $\Sigma_1$ corresponds to $-pA+qB$, where $p$ and $q$ are taken to be relatively prime when $pq\ne 0$, $p=1$ when $q=0$, and $q=1$ when $p=0$. The space of measured foliation $\mathcal{MF}(\Sigma_1)$ on $\Sigma_1$ is canonically identified with the quotient space $\mathbb{R}^2/(x,y)\sim (-x,-y)$, so that the $p/q$-curve corresponds to the equivalence class $[-p,q]\in  \mathbb{R}^2/(x,y)\sim (-x,-y)$ of $(-p,q)\in \mathbb{R}^2$. The space $\mathcal{PMF}(\Sigma_1)=(\mathcal{MF}(\Sigma_1)-\{0\})/\mathbb{R}_{>0}$ of projective measured foliations is identified with $\partial\mathbb{H}=\hat{\mathbb{R}}=\mathbb{R}\cup\{\infty\}$, and the identification is induced by the map
\begin{equation}
\label{eq:identification_PMF}
\Pi_1\colon \mathcal{MF}(\Sigma_1)=\mathbb{R}^2/(x,y)\sim (-x,-y)\ni [a,b]\mapsto -a/b\in \hat{\mathbb{R}}.
\end{equation}

The \emph{extremal length} $\ext_\tau(F_[a,b])$ of the measured foliation $F_{[a,b]}$ is equal to
$$
\ext_\tau([a,b])=\dfrac{|a+b\tau|^2}{{\rm Im}(\tau)}.
$$
The Teichm\"uller distance on $\teich_1$ coincides with the hyperbolic distance $d_\mathbb{H}$ of curvature $-4$, and \emph{Kerckhoff's formula} holds:
$$
d_T(x_{\tau_1},x_{\tau_2})=\dfrac{1}{2}\log\sup_{p/q\in \hat{\mathcal{Q}}}\dfrac{\ext_{x_{\tau_1}}(C_{p/q})}{\ext_{x_{\tau_2}}(C_{p/q})}
=
\dfrac{1}{2}\log\dfrac{|\tau_1-\overline{\tau_2}|+|\tau_1-\tau_2|}{|\tau_1-\overline{\tau_2}|-|\tau_1-\tau_2|}.
$$
The \emph{horofunction} appears in the horofunction compactification (introduced by Gromov \cite{MR624814}) which is defined by the closure of the embedding (defined with basepoint $\tau=\tau_0$)
$$
\teich_1\ni x_{\tau_1}\mapsto [\teich_1\ni \tau \mapsto d_T(x_\tau,x_{\tau_1})-d_T(x_{\tau_1},x_{\tau_0})]\in C^0(\teich_1)
$$
into the space of continuous functions on $\teich_1$ (endowed with the topology of uniform convergence on any compact sets). In the case of $(\teich_1,d_T)\cong (\mathbb{H},d_\mathbb{H})$, the horofunction compactification canonically coincides with the closure $\overline{\mathbb{H}}=\mathbb{H}\cup \hat{\mathbb{R}}$. The horofunction associated to $\Pi_1([a,b])=-a/b\in \hat{\mathbb{R}}$ is 
\begin{align*}
\mathfrak{h}(x_\tau,x_{\tau_0},-a/b)
&=\lim_{\tau_1\to -a/b}(d_T(\tau ,x_{\tau_1})-d_T(x_{\tau_1},x_{\tau_0})) \\
&=
\frac{1}{2}\log\dfrac{{\rm Im}(\tau_0)}{{\rm Im}(\tau)}\dfrac{|a+b\tau|^2}{|a+b\tau_0|^2}
\\
&=
\frac{1}{2}(\log\ext_{x_\tau}(F_{[a,b]})-\log\ext_{x_{\tau_0}}(F_{[a,b]})).
\end{align*}

The Thurston measure $\ThursM$ on $\mathcal{MF}(\Sigma_1)$ is induced by the Euclidean measure on $\mathbb{R}^2$ up to multiplying positive constants. The measure $\ThursM$ is a mapping class group-invariant and ergodic measure supported on the filling measured foliations (e.g. \cite{MR1503464} and \cite{MR1953296}. See also \cite{MR2424174}). In the case of tori, filling measured foliations correspond to points in $\mathcal{MF}(\Sigma_1)=\mathbb{R}^2/(x,y)\sim (-x,-y)$ with irrational slopes. Let $F_{[a,b]}$ be the measured foliation labeled $[a,b]\in \mathcal{MF}(\Sigma_1)$.
The unit sphere in terms of the extremal length function $\mathcal{SMF}_{x_{\tau}}=\{F\in \mathcal{MF}\mid \ext_{x_{\tau}}(F)=1\}$
is (the quotient of) an ellipse $\{[a,b]\mid |a+b\tau|^2={\rm Im}(\tau)\}$.
The (normalized) Thurston measure $\PThursM^\tau$ associated to $\tau\in \mathbb{H}\cong \teich_1$ is a Borel measure on the ellipse $\mathcal{SMF}_{x_{\tau}}$, which is defined by the cone extension (cf. \eqref{eq:Thurston-measure-SMF}).
The pushforward measure ${\boldsymbol \mu}_\tau=(\Pi_1)_*(\PThursM^\tau)$ on $\hat{\mathbb{R}}=\partial \mathbb{H}=\partial \teich_1$ is 
\begin{equation}
\label{eq:harmonic_measure_H}
{\boldsymbol \mu}_\tau(A)=\int_A\dfrac{{\rm Im}(\tau)}{|\tau-\xi|^2}\dfrac{d\xi}{\pi}
\end{equation}
for $A\subset \hat{\mathbb{R}}$, which is nothing but the harmonic measure on $\mathbb{H}$ at $\tau$ (cf. \cite[\S I.1]{MR2150803}). The unit sphere $\mathcal{SMF}_{x_{\tau}}$ is thought of as the infinitesimal circle in the tangent space and the harmonic measure (the pushforward measure) is the visual angle measure (e.g. \cite[\S1]{MR2150803}).
Thus, the classical Poisson integral formula \eqref{eq:Poisson_integral_formula_up} is written as
$$
u(\tau)=\int_{\partial \teich_1}u(\xi)e^{-2\mathfrak{h}(x_\tau,x_{\tau_0},\xi)}d{\boldsymbol \mu}_{\tau_0}(\xi)
$$
with fixed $\tau_0\in \teich_1$, as discussed around \eqref{eq:main-Poisson-integral-formula-Buseman}.

\subsection{Cases of once punctured tori and fourth punctured spheres}
\label{subsec:Case_once_tori}
The Teichm\"uller space $\teich_{1,1}$ of once punctured tori is canonically identified with the Teichm\"uller space $\teich_1$ of tori and hence with the upper-half plane $\mathbb{H}$. Hence we write $x_\tau$ a point in $\teich_{1,1}$ corresponding to $\tau\in \mathbb{H}$ as \S\ref{subsec:Case_tori}. From the commensurability, the Bers embedded the Teichm\"uller space $\teich_{0,4}$ of fourth punctured spheres canonically coincides with that of $\teich_{1,1}$ (cf. \cite[Lemma 3.1]{MR2060380}). 

Fix $x_0\in \teich_{1,1}$. Let $\Bers{x_0}$ be the Bers slice of once punctured tori with base point $x_0$ (cf. \S\ref{subsec:Bers-slice}. See also Figure \ref{fig:bers-slice-sqr-00.25}). The identification $\mathbb{H}\cong \Bers{x_0}$ is nothing but the Riemann mapping. Since the Bers slice is a Jordan domain, the identification extends the closures $\overline{\mathbb{H}}\to \overline{\Bers{x_0}}$ (cf. \cite{MR1689341}). The map $\partial \mathbb{H}\to \partial \Bers{x_0}$ induced from the Riemann mapping coinsides with the mapping defined from the ending lamination theorem after identifying $\partial \mathbb{H}\cong \mathcal{PMF}$ by \eqref{eq:identification_PMF}.

The harmonic measure $\pushThursMBers_{x_\tau}$ at $x_\tau\in \teich_{1,1}\cong \Bers{x_0}$ on the Bers boundary $\partial \Bers{x_0}$ is the pushforward measure of \eqref{eq:harmonic_measure_H}, and hence, the harmonic measure $\pushThursMBers_{x_\tau}$ is the pushforward measure of the normalized Thurston measure $\PThursM^\tau$ on the unit sphere $\mathcal{SMF}_{x_\tau}$ of the extremal length function via $\mathcal{SMF}_{x_\tau}\to \partial\Bers{x_0}\cong \partial \mathbb{H}$ as discussed in \S\ref{subsec:Case_tori}. Therefore, our main theorem, Theorem \ref{eq:main-Poisson-integral-formula}, in this case follows from these observations (cf. \cite{MR2150803}).



\section{Teichm\"uller theory}
\label{sec:Teichmuller-theory}
Let $\Sigma_{g,m}$ be a closed orientable surface 
of genus $g$ with $m$-marked points with $2g-2+m>0$ (possibly $m=0$).
We define the complexity of $\Sigma_{g,m}$
by $\convgenus=\convgenus(\Sigma_{g,m})=3g-3+m$.
In this section,
we recall basics in Teichm\"uller theory.
For reference,
see \cite{MR590044},
\cite{MR568308},
\cite{MR903027} ,
\cite{MR2245223},
\cite{MR1215481},
and \cite{MR927291}
for instance.

\subsection{Teichm\"uller space}
\emph{Teichm\"uller space} $\teich_{g,m}$ of type $(g,m)$
is the equvalence classes
of marked Riemann surfaces of type $(g,m)$. 
A \emph{marked Riemann surface} $(M,f)$ of type $(g,m)$
is a pair of a Riemann surface $M$ of analytically finite type $(g,m)$
and an orientation preserving homeomorphism
$f\colon \Sigma_{g,m}\to M$.
Two marked Riemann surfaces $(M_{1},f_{1})$ and $(M_{2},f_{2})$
of type $(g,m)$ are
\emph{(Teichm\"uller) equivalent} if there is a conformal mapping
$h\colon M_{1}\to M_{2}$ such that $h\circ f_{1}$ is homotopic to $f_{2}$.

The \emph{Teichm\"uller distance} $d_T$ is a complete distance
on $\teich_{g,m}$ defined by
$$
d_T(x_1,x_2)=\frac{1}{2}\log \inf_hK(h)
$$
for $x_i=(M_i,f_i)$ ($i=1,2$),
where the infimum runs over all quasiconformal mapping
$h\colon M_1\to M_2$ homotopic to $f_2\circ f_1^{-1}$
and $K(h)$ is the maximal dilatation of a quasiconformal mapping $h$.

The \emph{mapping class group} $\mcg_{g,m}$ is 
the group of homotopy classes of orientation preserving homeomorphisms 
on $\Sigma_{g,m}$.
Any element $[\omega]\in \mcg_{g,m}$ acts on $\teich_{g,m}$
by $[\omega](M,f)=(M,f\circ \omega^{-1})$.

\subsection{Quadratic differentials}
For $x=(M,f)\in \teich_{g,m}$,
we denote by $\mathcal{Q}_{x}$ the complex Banach space
of holomorphic quadratic differentials $q=q(z)dz^{2}$
on $M$ with
$$
\|q\|=\int_{M}|q(z)|\frac{\sqrt{-1}}{2}dz\wedge d\overline{z}
<\infty.
$$
The space $\mathcal{Q}_{x}$ is isomorphic to $\mathbb{C}^{\convgenus}$.
The union $\mathcal{Q}_{g,m}=\cup_{x\in \teich_{g,m}}\mathcal{Q}_{x}$
is recognized as the holomorphic cotangent bundle of $\teich_{g,m}$ via the pairing \eqref{eq:pairing-T-coT} given later.
A differential $q\in \mathcal{Q}_{g,m}$ is said to be \emph{generic}
if all zeros are simple and all marked points of the underlying surface
are simple poles of $q$.
Generic differentials are open and dense subset in $\mathcal{Q}_{g,m}$
and in each fiber $\mathcal{Q}_{x}$ for $x\in \teich_{g,m}$.

\subsection{Infinitesimal complex structure on $\teich_{g,m}$}
\label{subsec:infinitesimal-Teich}
Teichm\"uller space $\teich_{g,m}$ is a complex manifold of dimension
$\convgenus$.
The infinitesimal complex structure is described as follows.

Let $x=(M,f)\in \teich_{g,m}$.
Let $L^{\infty}(M)$ be the Banach space of measurable $(-1,1)$-forms
$\mu=\mu(z)d\overline{z}/dz$ on $M$
with the essential supremum norm
$$
\|\mu\|_{\infty}={\rm ess.sup}_{p\in M}|\mu(p)|<\infty.
$$
Then,
the holomorphic tangent space $T_{x}\teich_{g,m}$ of $\teich_{g,m}$ at $x$
is described as the quotient space
$$
L^{\infty}(M)/\{\mu\in L^{\infty}(M)\mid
\langle \mu,\varphi\rangle=0,\forall \varphi\in \mathcal{Q}_{x}\},
$$
where
\begin{equation*}
\langle \mu,\varphi\rangle=
\int_{M}\mu(z)\varphi(z)\frac{\sqrt{-1}}{2}dz\wedge d\overline{z}.
\end{equation*}
Any element of $L^\infty(M)$ is called an \emph{infinitesimal Beltrami differential} in this context.
For $v=[\mu]\in T_x\teich_{g,m}$
and $\varphi\in \mathcal{Q}_x$,
a \emph{canonical pairing} between $T_x\teich_{g,m}$
and $\mathcal{Q}_x\cong T_x^*\teich_{g,m}$ is defined by
\begin{equation}
\label{eq:pairing-T-coT}
\langle v,\varphi\rangle=\langle \mu,\varphi\rangle.
\end{equation}

Let $q_0\in \mathcal{Q}_x$ be a generic differential and $v\in T_x\teich_{g,m}$.
The \emph{$q_0$-realization of $v$}
is a quadratic differential
$\eta_v\in \mathcal{Q}_x$ which satisfy
\begin{equation}
\label{eq:q0-realization}
\langle v,\varphi\rangle=\int_M\frac{\overline{\eta_v}}{|q_0|}\varphi
\end{equation}
for all $\varphi\in \mathcal{Q}_x$.
The correspondence
\begin{equation}
\label{eq:qo-realization-1}
T_x\teich_{g,m}\ni v\mapsto \eta_v\in \mathcal{Q}_x
\end{equation}
is an anti-complex linear isomorphism
(cf. \cite[Theorem 5.3]{MR3413977} and \cite[\S4.2]{MR3715450}).

\subsection{Measured foliations and laminations}
Let $\mathcal{S}$ be the set of homotopy classes of 
essential simple closed curves on $\Sigma_{g,m}$.
By a \emph{multi-curve} we mean an unordered finite sequences 
$(\alpha_i)_i$ in $\mathcal{S}$
such that $\alpha_i\ne \alpha_j$ and $i(\alpha_i,\alpha_j)=0$ for all $i\ne j$.
Let $i(\alpha,\beta)$ denote the \emph{geometric intersection number}
for simple closed curves $\alpha,\beta\in \mathcal{S}$.
Let $\mathcal{WS}=\{t\alpha\mid t\ge 0, \alpha\in \mathcal{S}\}$
be the set of weighted simple closed curves.
The intersection number on $\mathcal{WS}$
is defined by
\begin{equation}
\label{eq:intersection-number-WS}
i(t\alpha,s\beta)=ts\,i(\alpha,\beta)\quad
(t\alpha,s\beta\in \mathcal{WS}).
\end{equation}

\subsubsection{Measured foliations}
We consider an embedding
\begin{equation*}
\mathcal{WS}\ni t\alpha\mapsto [\mathcal{S}\ni \beta
\mapsto i(t\alpha,\beta)]\in \mathbb{R}_{\ge 0}^{\mathcal{S}}.
\end{equation*}
We topologize the function space
$\mathbb{R}_{\ge 0}^{\mathcal{S}}$
with the topology of pointwise convergence.
The closure $\mathcal{MF}$ of the image of the embedding is called
the \emph{space of measured foliations} on $\Sigma_{g,m}$.
Let
$$
{\rm proj}\colon \mathbb{R}_{\ge 0}^{\mathcal{S}}-\{0\}\to \mathbb{PR}_{\ge 0}^{\mathcal{S}}=(\mathbb{R}_{\ge 0}^{\mathcal{S}}-\{0\})/\mathbb{R}_{>0}
$$
be the projection.
The image $\mathcal{PMF}={\rm proj}(\mathcal{MF}-\{0\})$
is called the space of \emph{projective measured foliations}  on $\Sigma_{g,m}$.
We write $[F]$ the projective class of $F\in \mathcal{MF}-\{0\}$.
$\mathcal{MF}$ and $\mathcal{PMF}$ are homeomorphic 
to $\mathbb{R}^{2\convgenus}$ and $\mathbb{S}^{2\convgenus-1}$,
respectively.

By definition,
$\mathcal{MF}$
contains $\mathcal{WS}$
as a dense subset.
The intersection number extends continuously 
as a non-negative function $i(\,\cdot\,,\,\cdot\,)$
on $\mathcal{MF}\times \mathcal{MF}$
satisfying $i(F,F)=0$ and $F(\alpha)=i(F,\alpha)$
for $F\in \mathcal{MF}\subset \mathbb{R}_{\ge 0}^{\mathcal{S}}$
and $\alpha\in \mathcal{S}$.
The mapping class group $\mcg_{g,m}$
acts on $\mathcal{MF}$ by
$$
i([\omega](F),\alpha)=i(F,\omega^{-1}(\alpha))\quad
(F\in \mathcal{MF},
\alpha\in \mathcal{S})
$$
and $[\omega](tF)=t[\omega](F)$ for $t\ge 0$ and $F\in \mathcal{MF}$.
We say that two measured foliations $F$ and $G$ are \emph{transverse}
if no nonzero measured foliation $H$ satisfies
$i(H,F)=i(H,G)=0$
(cf. \cite{MR1099913}).

\subsubsection{Measured laminations}
Fix a hyperbolic structure of finite area on $\Sigma_{g,m}$.
A \emph{geodesic lamination} $L$ on $\Sigma_{g,m}$ is a non-empty closed set
which is a disjoint union of complete simple geodesics,
where a geodesic is said to be \emph{complete}
if it is either closed or has infinite length in both of its ends.
The geodesics in $L$ are called the \emph{leaves} of $L$.
A \emph{transverse measure} for a geodesic lamination $L$
means an assignment a Borel measure to each arc transverse to $L$,
subject to the following two conditions:
If the arc $k'$ is contained in the transverse arc $k$,
the measure assigned to $k'$ is the restriction of the measure assigned to $k$;
and
if the two arcs $k$ and $k'$ are homotopic through a family of arcs transverse to $L$,
the homotopy sends the measure assigned to $k$ to the measure assigned to $k'$.
A transverse measure to a geodesic lamination $L$
is said to have \emph{full support} if
the support of the measure assigned to each transverse arc $k$ is exactly $k\cap L$.
A \emph{measured lamination} $L$ is a pair  consisting of a geodesic lamination
called the \emph{support} of $L$, 
and full support transverse measures to the support. 
Let $\mathcal{ML}$ be the set of measured laminations on $\Sigma_{g,m}$
(with fixing a complete hyperbolic structure).
A weighted simple closed curve $t\alpha$ is identified with
a measured lamination consisting of a simple closed geodesic homotopic to $\alpha$
and an assignment $t$-times the Dirac measures
whose support consists of the intersection to transverse arcs.
The intersection number \eqref{eq:intersection-number-WS}
on weighted simple closed curves
extends continuously to $\mathcal{ML}\times \mathcal{ML}$.

It is known that there is a canonical identification
$\mathcal{MF}\cong \mathcal{ML}$ such that
$F\in \mathcal{MF}$ corresponds to $L$ if and only if
\begin{equation*}
i(F,\alpha)=i(L,\alpha)\quad (\alpha\in \mathcal{S})
\end{equation*}
(e.g. \cite{MR1810534}, \cite{MR1144770} and \cite{Thuston-LectureNote}).

\begin{convention}
Henceforth,
we will frequency use the canonical correspondence between measured laminations
and measured foliations.
\end{convention}

For $F\in \mathcal{MF}$,
we denote by $L(F)$ the support of the corresponding measured lamination.
For simplicity,
we call $L(F)$ the \emph{support lamination} of $F$.
For a geodesic lamination $L$,
we define
$$
\mathcal{MF}_L=\{F\in \mathcal{MF}\mid L(F)\subset L\}.
$$
It is known that
$\mathcal{MF}_L$ is a non-empty convex closed cone in $\mathcal{MF}$.

An $F\in \mathcal{MF}$ is called \emph{minimal}
if any leaf of $L(F)$ is dense in $L(F)$
(with respect to the induced topology from $\Sigma_{g,m}$).
An $F\in \mathcal{MF}$ is called \emph{filling}
if any complementary region of $L(F)$ is either an ideal polygon or 
a once punctured ideal polygon, which is equivalent to say that $i(F,\alpha)>0$ for all $\alpha\in \mathcal{S}$.
In this paper,
a measured lamination $L$ is said to be \emph{uniquely ergodic} if it is minimal and filling
and if $L'\in \mathcal{ML}$ satisfies $i(L,L')=0$,
then $L'=tL$ for some $t\ge 0$.
A measured foliation is said to be \emph{uniquely ergodic} if
so is the corresponding measured lamination.

A measured foliation $F$ is said to be \emph{essentially complete} if 
each component of the complement of $L(F)$ is either an ideal triangle
or a once punctured ideal monogon
if $(g,m)\ne (1,1)$
and a once punctured bigon otherwise
(cf. \cite[Definition 9.5.1, Propositions 9.5.2 and 9.5.4]{Thuston-LectureNote}).
Essentially complete measured foliations are generic in $\mathcal{MF}$.

\section{Extremal length geometry}
\label{sec:ELG}
\subsection{Hubbard-Masur theorem}
\label{subsec:Hubbard-masur-theorem}
For $x=(M,f)\in \teich_{g,m}$ and $q\in \mathcal{Q}_{x}$,
we define the \emph{vertical foliation} $v(q)\in \mathcal{MF}$
of $q=q(z)dz^2$
by
$$
i(v(q),\alpha)=\inf_{\alpha'\in f(\alpha)}\int_{\alpha'}|{\rm Re}(\sqrt{q(z)}dz)|
\quad (\alpha\in \mathcal{S}).
$$
We call $h(q)=v(-q)$ the \emph{horizontal foliation} of $q$.
Hubbard and Masur \cite{MR523212}
observed that the mapping
\begin{equation}
\label{eq:Hubbard-Masur-homeo}
\mathcal{Q}_{x}\ni q\mapsto v(q)\in \mathcal{MF}
\end{equation}
is homeomorphic for all $x\in \teich_{g,m}$.
From \eqref{eq:Hubbard-Masur-homeo},
for any $x\in \teich_{g,m}$ and $F\in \mathcal{MF}$,
there is a unique $q_{F,x}\in \mathcal{Q}_{x}$
with $v(q_{F,x})=F$.
We call $q_{F,x}$ the \emph{Hubbard-Masur differential}
for $F$ on $x$.
When $F$ is essentially complete,
$q_{F,x}$ is generic for all $x\in \teich_{g,m}$.

The horizontal and vertical foliations of $q\in \mathcal{Q}_{g,m}$ are transverse.
Namely,
\begin{equation}
\label{eq:fillingup}
i(h(q),H)+i(v(q),H)>0
\end{equation}
holds for all $H\in \mathcal{MF}-\{0\}$.
Futhermore,
we have an embedding
\begin{equation*}
\mathcal{Q}_{g,m}\ni q\to (h(q),v(q))\in \mathcal{MF}\times \mathcal{MF}.
\end{equation*}
The image is characterized by \eqref{eq:fillingup}
(cf. \cite{MR1099913}).

\subsection{Extremal length}
\label{subsec:extremal-length}
The \emph{extremal length} of $F\in \mathcal{MF}$ on $x=(M,f)\in \teich_{g,m}$ is defined by
$$
\ext_{x}(F)=\|q_{F,x}\|.
$$
The extremal length is a conformal quasi-invariant
in the sense that
\begin{equation}
\label{eq:comparison-K}
e^{-2d_T(x,y)}\ext_y(F)
\le \ext_x(F)
\le e^{2d_T(x,y)}\ext_y(F)
\end{equation}
for $x,y\in \teich_{g,m}$ and $F\in \mathcal{MF}$.
The extremal length function is continuous on $\teich_{g,m}\times \mathcal{MF}$.
Furthermore,
\eqref{eq:comparison-K}
is known to be sharp 
by \emph{Kerckhoff's formula}
\begin{equation*}
d_T(x,y)=\frac{1}{2}\log\sup_{F\in \mathcal{MF}-\{0\}}
\frac{\ext_x(F)}{\ext_{y}(F)}
\end{equation*}
(cf. \cite{MR559474}).
The extremal length of $\alpha\in \mathcal{S}$ is characterized by
\begin{equation}
\label{eq:analytic-definition-extremal-length}
\ext_x(\alpha)=\sup_{\rho}
\left\{
\left(\inf_{\alpha'\in f(\alpha)}\int_{\alpha'}\rho(z)|dz|\right)^2/\int_{M}\rho(z)^2dxdy
\right\},
\end{equation}
where the supremum runs over all conformal metric $\rho=\rho(z)|dz|$ on $M$.
Substituting the hyperbolic metric to $\rho$ in \eqref{eq:analytic-definition-extremal-length},
we have a comparison
\begin{equation}
\label{eq:comparison-hyp-ext}
{\rm leng}_x(\alpha)\le \sqrt{2\pi(2g-2+m)}\ext_x(\alpha)^{1/2},
\end{equation}
where ${\rm leng}_x(\alpha)$ is the hyperbolic length of the geodesic representative of $f(\alpha)$
on $M$.
After setting ${\rm leng}_x(t\alpha)=t\,{\rm leng}_x(\alpha)$
for $t\alpha\in \mathcal{WS}$,
we see that the comparison \eqref{eq:comparison-hyp-ext}
also holds for measured foliations (laminations).

Minsky observed the following
inequality,
called \emph{Minsky's inequality}
\begin{equation}
\label{eq:minsky-inequality}
i(F,G)^2\le \ext_x(F)\ext_x(G)
\end{equation}
for $F,G\in \mathcal{MF}$
and $x\in \teich_{g,m}$
(cf. \cite[Lemma 5.1]{MR1231968}).

\subsection{Extremal length geometry}
\label{subsec:ELG}
The closure $\cl{\teich_{g,m}}$ of the embedding
\begin{equation}
\label{eq:GMdef}
\teich_{g,m}\ni x\mapsto 
{\rm proj}([\mathcal{S}\ni \beta
\mapsto \ext_{x}(\alpha)^{1/2}])\in \mathbb{PR}_{\ge 0}^{\mathcal{S}}
\end{equation}
is called the \emph{Gardiner-Masur compactification} of $\teich_{g,m}$.
We identify $\teich_{g,m}$ with the image of \eqref{eq:GMdef}.
The \emph{Gardiner-Masur boundary} $\partial_{GM}\teich_{g,m}$ is, by definition,
the complement of $\teich_{g,m}$ from the Gardiner-Masur compactification. The Gardiner-Masur compactification coincides with the horofunction compactification (cf. \cite{MR3289706}).
The Gardiner-Masur boundary contains $\mathcal{PMF}$ (cf. \cite{MR1099913}).

Let $\mathcal{C}_{GM}={\rm proj}^{-1}(\cl{\teich_{g,m}})$.
Since $\mathcal{PMF}\subset  \partial_{GM}\teich_{g,m}$,
$\mathcal{C}_{GM}$ contains $\mathcal{MF}$.
The intersection number $i(\,\cdot,\cdot\,)$ and
the extremal length function $\ext_{x}$ ($x\in \teich_{g,m}$)
on $\mathcal{MF}$
extend continuously to $\mathcal{C}_{GM}$
(cf. \cite[Theorems 1 and 3]{MR3278905}).

For any $x_{0}\in \teich_{g,m}$,
there is a continuous function
$i_{x_{0}}$
on $\cl{\teich_{g,m}}\times \cl{\teich_{g,m}}$
such that
\begin{align}
i_{x_{0}}(x,y)
&=\exp(-2\gromov{x}{y}{x_{0}}),
\label{eq:ex-geo1}
\\
i_{x_{0}}(x,[\mathfrak{p}]) &=
\exp(-d_{T}(x_{0},x))\frac{\ext_{x}(\mathfrak{p})^{1/2}}{\ext_{x_{0}}(\mathfrak{p})^{1/2}},
\label{eq:ex-geo2}
\\
i_{x_{0}}([\mathfrak{p}],[\mathfrak{q}]) &=\frac{i(\mathfrak{p},\mathfrak{q})}{\ext_{x_0}(\mathfrak{p})^{1/2}\ext_{x_0}(\mathfrak{q})^{1/2}}
\label{eq:ex-geo3}
\end{align}
for $x,y\in \teich_{g,m}$ and $\mathfrak{p},\mathfrak{q}\in \mathcal{C}_{GM}$,
$[\mathfrak{p}]={\rm proj}(\mathfrak{p})$
and
$[\mathfrak{q}]={\rm proj}(\mathfrak{q})$,
where $\gromov{x}{y}{y_{0}}$ is the \emph{Gromov product}
$$
\gromov{x}{y}{y_{0}}=\frac{1}{2}(d_{T}(y_{0},x)+d_{T}(y_{0},y)-d_{T}(x,y))
$$
(cf. \cite{MR3278905}). Notice from \eqref{eq:ex-geo1} and \eqref{eq:ex-geo2} that the holofunction $\mathfrak{h}(x,\tau,x_0,[\mathfrak{p}])$ at $[\mathfrak{p}]\in \partial_{GM}\teich_{g,m}$ is 
$$
\mathfrak{h}(x,x_0,[\mathfrak{p}])=\dfrac{1}{2}\left(\log \ext_{x}(\mathfrak{p})-\log \ext_{x_0}(\mathfrak{p})\right).
$$

\section{Kleinian surface groups and the Bers slice}
\label{sec:Kleinian-surface-groups-and-the-Bers-slice}
\subsection{Kleinian surface groups}
\label{subsec:Kleinian-groups}
A \emph{(marked) Kleinian surface group} is, by definition,
a Kleinian group with an isomorphism from $\pi_1(\Sigma_{g,m})$
which sends peripheral curves to parabolic elements.
Let $\rho\colon \pi_1(\Sigma_{g,m})\to {\rm PSL}_2(\mathbb{C})$
be a Kleinian surface group.
Then, there is a homeomorphism from $\Sigma_{g,m}\times \mathbb{R}$ to the quotient manifold $\mathbb{H}^{3}/\rho(\pi_1(\Sigma_{g,m}))$ which induces $\rho$ (cf. \cite{MR847953} and \cite{Thuston-LectureNote}).
For $\alpha\in \mathcal{S}$,
the hyperbolic length ${\rm leng}_{\rho}(\alpha)$ of $\alpha$ on the quotient manifold
$\mathbb{H}^{3}/\rho(\pi_1(\Sigma_{g,m}))$ is the translation length of the corresponding
element in $\rho(\pi_1(\Sigma_{g,m}))$.
For a measured lamination (foliation) $L$ which is realizable
in the quotient manifold of $\rho$,
we define the hyperbolic length ${\rm leng}_\rho(L)$
as the hyperbolic length with respect to the induced hyperbolic metric
from the pleated surface realizing $L$.
By taking the lim-inf of the infima of length of measured laminations which are realizable in 
the quotient manifold of $\rho$,
the (hyperbolic) length function ${\rm leng}_\rho$ is well-defined on $\mathcal{MF}$.
The length function is known to be continuous on the product of the space of conjugacy classes
of Kleinian surface groups for $\Sigma_{g,m}$
and $\mathcal{MF}$ (cf. \cite{MR1791139} and \cite{MR1029395}).

When a Kleinian surface group $\rho$ admits a simply connected invariant domain $\Omega\subset \hat{\mathbb{C}}$,
the quotient $\Omega/\rho(\pi_1(\Sigma_{g,m}))$ is a Riemann surface
homeomorphic to $\Sigma_{g,m}$.
The representation $\rho$ determines a marking on $\Omega/\rho(\pi_1(\Sigma_{g,m}))$.
The hyperbolic length of any measured foliation $H$ on $\Omega/\rho(\pi_1(\Sigma_{g,m}))$
is at least ${\rm leng}_\rho(H)$
(cf. \cite[Theorem 3]{MR0297992} and \cite[Proposition 2.1]{MR1029395}).

\subsection{Bers slice}
\label{subsec:Bers-slice}
Fix $x_{0}=(M_{0},f_{0})\in \teich_{g,m}$
and let $\Gamma_{0}$ be the marked Fuchsian group acting on $\mathbb{H}$
uniformizing $M_{0}$ with the marking $\pi_1(\Sigma_{g,m})\cong \Gamma_0$
induced by $f_0$.
Let $A_{2}(\mathbb{H}^{*},\Gamma_{0})$ be the Banach space of automorphic forms
on $\mathbb{H}^{*}=\hat{\mathbb{C}}-\overline{\mathbb{H}}$
of weight $-4$ with the hyperbolic supremum norm.
For each $\varphi\in A_{2}(\mathbb{H}^{*},\Gamma_{0})$,
we can define a locally univalent meromorphic mapping $W_{\varphi}$
on $\mathbb{H}^{*}$ and the monodromy homomorphism 
$\rho_{\varphi}\colon \Gamma_{0}\to {\rm PSL}_{2}(\mathbb{C})$
such that 
the Schwarzian derivative of
$W_{\varphi}$ 
is equal to $\varphi$
and $\rho_{\varphi}(\gamma)\circ W_{\varphi}=W_{\varphi}\circ \gamma$
for all $\gamma\in \Gamma_{0}$.
Let $\Gamma_{\varphi}=\rho_{\varphi}(\Gamma_{0})$.

The \emph{Bers slice} $\Bers{x_{0}}$ with base point
$x_{0}\in \teich_{g,m}$
is a domain in $A_{2}(\mathbb{H}^{*},\Gamma_{0})$
which consists of $\varphi\in A_{2}(\mathbb{H}^{*},\Gamma_{0})$
such that $W_{\varphi}$ admits a quasiconformal extension
to $\hat{\mathbb{C}}$.
The Bers slice $\Bers{x_{0}}$ is bounded and
identified biholomorphically
with $\teich_{g,m}$.
Indeed,
any $x\in \teich_{g,m}$ corresponds to
$\varphi$ such that $\Gamma_{\varphi}$ is
the marked quasifuchsian group uniformizing $x_{0}$ and $x$
(cf. \cite{MR0111834}).
The closure $\overline{\Bers{x_{0}}}$ of $\Bers{x_{0}}$ in $A_{2}(\mathbb{H}^{*},\Gamma_{0})$
is called the \emph{Bers compactification}
of $\teich_{g,m}$.
The boundary $\partial\Bers{x_{0}}$ is called the \emph{Bers boundary}.
For $\varphi\in \overline{\Bers{x_{0}}}$,
$\Gamma_\varphi$ is a Kleinian surface group with isomorphism
$\rho_\varphi\colon \pi_1(\Sigma_{g,m})\cong \Gamma_0\to \Gamma_\varphi$.

\subsection{Boundary groups without APTs}
\label{subsec:BoundarygroupswithoutAPT}
A boundary point $\varphi\in \partial\Bers{x_0}$ is called a \emph{cusp} if
there is a non-parabolic element $\gamma\in \Gamma_{0}$
such that $\rho_{\varphi}(\gamma)$ is parabolic
(cf. \cite{MR0297992}).
Such $\gamma$ or $\rho_{\varphi}(\gamma)$
is called an \emph{accidental parabolic transformation} (APT)
of $\varphi$ or $\Gamma_{\varphi}$.
Let $\partial^{cusp}\Bers{x_{0}}$ be the set of cusps in $\partial\Bers{x_{0}}$
and set
$\partial^{mf}\Bers{x_{0}}
=\partial\Bers{x_{0}}-\partial^{cusp}\Bers{x_{0}}$.

For $\varphi\in \partial^{mf}\Bers{x_{0}}$,
the quotient manifold $\mathbb{H}^{3}/\Gamma_{\varphi}$
has two (non-cuspidal) ends
corresponding to $\Sigma_{g,m}\times (0,\infty)$
and $\Sigma_{g,m}\times (-\infty,0)$.
The negative end is geometrically finite and the surface at infinity
is conformally equivalent to $M_{0}$ (with orientation reversed).
To another end,
we assign a unique minimal and filling geodesic lamination,
called the \emph{ending lamination} for $\varphi$
(cf. \cite{MR847953} and \cite{Thuston-LectureNote}).

Let $x_0\in \teich_{g,m}$.
Let $\mathcal{PMF}^{mf}$ be the set of projective classes
of minimal and filling measured foliations.
By virtue of the ending lamination theorem and the Thurston double limit theorem,
we have the closed continuous surjective mapping
\begin{equation}
\label{eq:curve-complex-mininal-bers}
\Xi_{x_0}\colon \mathcal{PMF}^{mf}\to 
\partial^{mf} \Bers{x_0}
\end{equation}
which assigns $[F]\in \mathcal{PMF}^{mf}$
to the boundary group whose ending lamination is equal to $L(F)$.
The preimage of any point in $\partial^{mf} \Bers{x_0}$ is compact
(cf. \cite{MR2582104}).
$\mathcal{PMF}^{mf}$ contains
a subset $\mathcal{PMF}^{ue}$ consisting of uniquely ergodic measured foliations.
Let $\partial^{ue} \Bers{x_0}$ be the image of
$\mathcal{PMF}^{ue}$
under the identification \eqref{eq:curve-complex-mininal-bers}. 

For $x_1,x_2\in \teich_{g,m}$,
the change of the base points $\beta_{x_1,x_2}\colon
\Bers{x_1}\to \Bers{x_2}$
extends continuously to
$\Bers{x_1}\cup \partial^{mf}\Bers{x_1}\to \Bers{x_2}\cup \partial^{mf}\Bers{x_2}$
(cf. \cite{MR3330544}).
We denote by $\beta_{x_1,x_2}$ the extension for the simplicity.
In particular, the action of the mapping class group extends continuously on $\Bers{x_0}\cup \partial^{mf} \Bers{x_0}$ (cf. \cite{MR624803}). However, the action does not extend as homeomorphisms on the whole Bers compactification (cf. \cite{MR1037141}).

\subsection{Limits of Teichm\"uller rays in the Bers slice}
For $[H]\in \mathcal{PMF}$ and $x=(M,f)\in \teich_{g,m}$
the \emph{Teichm\"uller (geodesic) ray} $R_{[H],x}\colon [0,\infty)\to \teich_{g,m}$
for $[H]$ emanating from $x$ is defined as follows:
For $t\ge 0$, let $h_t\colon M\to h_t(M)$ is the quasiconformal mapping
with the Beltrami differential $\tanh(t)|q_{H,x}|/q_{H,x}$.
We set $R_{[H],x}(t)=(h_t(M),h_t\circ f)$.

The following proposition might be well-known.
However, 
we shall give a brief proof for confirmation.

\begin{proposition}
\label{prop:Teichmuller-limit}
Let $x_0\in \teich_{g,m}$.
For $x\in \teich_{g,m}$ and  $[H]\in \mathcal{PMF}^{mf}$,
the Teichm\"uller ray $R_{[H],x}$ converges to the totally degenerate
group without APT in $\partial^{mf}\Bers{x_{0}}$
whose ending lamination is $L(H)$.
\end{proposition}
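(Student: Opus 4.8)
The plan is to work in the Bers slice model $\teich_{g,m}\cong\Bers{x_0}$, to exploit that $\Bers{x_0}$ is a bounded domain in the finite-dimensional space $A_2(\mathbb{H}^*,\Gamma_0)$, and to pin down every accumulation point of the ray by forcing the measured foliation $H$ to have zero length in the limiting Kleinian manifold. The first step reduces the problem to subsequential limits: the defining Beltrami differential has constant coefficient $\tanh(t)$, so $d_T(x,R_{[H],x}(t))=t$ and the ray leaves every compact subset of $\teich_{g,m}$ as $t\to\infty$. Regarding $\{R_{[H],x}(t)\}$ as a subset of the compact metric space $\overline{\Bers{x_0}}$, each sequence $t_n\to\infty$ has a subsequence along which $R_{[H],x}(t_n)$ converges to some $\varphi_\infty\in\partial\Bers{x_0}$. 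It then suffices to show $\varphi_\infty=\Xi_{x_0}([H])$ for every such limit; convergence of the whole ray follows from the compactness of $\overline{\Bers{x_0}}$.

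The core estimate is that $H$, being the vertical foliation of $q_{H,x}$, contracts at the extreme rate permitted by \eqref{eq:comparison-K}: computing in natural coordinates for $q_{H,x}$, where the Teichm\"uller map is area preserving and scales the vertical transverse measure by $e^{-t}$, I would check that $\ext_{R_{[H],x}(t)}(H)=e^{-2t}\ext_x(H)$. The group attached to $R_{[H],x}(t)$ is quasifuchsian with top conformal end $\cong R_{[H],x}(t)$; by the inequality recalled in \S\ref{subsec:Kleinian-groups} (cf. \cite[Theorem 3]{MR0297992}, \cite[Proposition 2.1]{MR1029395}) the length of $H$ in the quotient $3$-manifold is bounded by its hyperbolic length on that surface at infinity, which by \eqref{eq:comparison-hyp-ext} is at most $C\,\ext_{R_{[H],x}(t)}(H)^{1/2}=Ce^{-t}\ext_x(H)^{1/2}$. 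Hence ${\rm leng}_{R_{[H],x}(t)}(H)\to 0$.

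Finally I would pass to the limit. Convergence $R_{[H],x}(t_n)\to\varphi_\infty$ in $\overline{\Bers{x_0}}$ gives algebraic convergence of the monodromies $\rho_{\varphi_{t_n}}\to\rho_{\varphi_\infty}$, so the continuity of the length function on conjugacy classes of Kleinian surface groups times $\mathcal{MF}$ (cf. \cite{MR1791139}, \cite{MR1029395}) yields ${\rm leng}_{\varphi_\infty}(H)=0$. A measured lamination of zero length is unrealizable in $\mathbb{H}^3/\Gamma_{\varphi_\infty}$, so $L(H)$ is contained in the ending lamination of $\varphi_\infty$; since $L(H)$ is filling it cannot be a proper sublamination and must equal the ending lamination, while the minimality of $L(H)$ excludes any closed leaf and hence any accidental parabolic, so $\varphi_\infty\in\partial^{mf}\Bers{x_0}$. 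By the definition of $\Xi_{x_0}$ in \eqref{eq:curve-complex-mininal-bers}, this gives $\varphi_\infty=\Xi_{x_0}([H])$, as desired. I note in passing that, since $\Xi_{x_0}$ depends only on the support $L(H)$ and not on the transverse measure, this argument also covers minimal filling foliations that are not uniquely ergodic.

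I expect the main obstacle to be this last step: converting ${\rm leng}_{\varphi_\infty}(H)=0$ into an identification of the ending lamination rigorously requires the unrealizability criterion for laminations together with the ending lamination theorem, and one must first make precise that convergence in the Bers compactification entails algebraic convergence of the monodromy representations, so that the cited continuity of the length function genuinely applies.
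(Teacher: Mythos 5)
Your proposal is correct and follows essentially the same route as the paper's proof: extract an accumulation point of the ray in the compact Bers closure, bound ${\rm leng}_{\rho_{\varphi_t}}(H)$ by the conformal-boundary length and then by $\sqrt{2\pi(2g-2+m)}\,e^{-t}\ext_x(H)^{1/2}$ via \eqref{eq:comparison-hyp-ext}, invoke continuity of the length function to conclude $H$ is unrealizable in the limit manifold, and finish with the ending lamination theorem. The extra care you take in upgrading unrealizability to identification of the ending lamination (filling excludes proper containment and accidental parabolics) only makes explicit what the paper's proof leaves implicit.
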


\begin{proof}
Let $z_t=R_{[H],x}(t)$.
Let $\varphi \in \partial\Bers{x_{0}}$ be an accumulation point
of $\{z_t\}_{t\ge 0}$ as $t\to \infty$.
Let $\varphi_t\in \Bers{x_{0}}$ be the corresponding point to $z_t$ via the Bers embedding.
From \eqref{eq:comparison-hyp-ext},
for any $t\ge 0$,
the hyperbolic length ${\rm leng}_{\rho_{\varphi_t}}(H)$
of $H$ on the quotient manifold of $\rho_{\varphi_t}$ satisfies
\begin{align*}
{\rm leng}_{\rho_{\varphi_t}}(H)
\le
{\rm leng}_{z_t}(H)
&\le
\sqrt{2\pi(2g-2+m)}
\ext_{z_t}(H)^{1/2}
\\
&=
\sqrt{2\pi(2g-2+m)}
e^{-t}\ext_{x}(H)^{1/2}.
\end{align*}
By the continuity of the length function,
$H$ is not realizable in the marked Kleinian manifold associated to $\varphi$.
Hence,
the ending lamination associated to $\varphi$ is equal to $L(H)$.
From the ending lamination theorem,
such a Kleinian surface group is unique.
\end{proof}

Masur \cite{MR0385173} observed the same conclusion as Proposition \ref{prop:Teichmuller-limit}
for multi-curves.

\section{Thurston measure}
\label{sec:Thurston-measure}

\subsection{Thurston measure on $\mathcal{MF}$}
\label{subsec:Thurston-measure}
%
The \emph{Thurston measure} $\ThursM$ on $\mathcal{MF}$ is a unique locally finite $\mcg_{g,m}$-invariant ergodic measure, supported on the locus of filling measured laminations (cf. \cite{MR2424174}. See also \cite{MR1144770} and \cite{MR2415399}).
The Thurston measure satisfies that
for any measurable set
$E\subset \mathcal{MF}$
and $t>0$,
\begin{equation}
\label{eq:multple-TH-measure}
\ThursM(\{tF\mid F\in E\})=t^{2\convgenus}\ThursM(E).
\end{equation}
Let $\mathcal{BMF}_x=\{F\in \mathcal{MF}\mid \ext_x(F)\le 1\}$ for $x\in \teich_{g,m}$.
\begin{equation}
\label{eq:HM-fns}
{\rm Vol}_{Th}(x)=\mu_{Th}(\mathcal{BMF}_x)
\end{equation}
is a continuous function on $\teich_{g,m}$ with ${\rm Vol}_{Th}([\omega](x))={\rm Vol}_{Th}(x)$ for $x\in \teich_{g,m}$ and $[\omega]\in \mcg_{g,m}$ since $\mathcal{BMF}_{[\omega](x)}=[\omega](\mathcal{BMF}_x)$. The function \eqref{eq:HM-fns} is called the \emph{Hubbard-Masur function} on $\teich_{g,m}$ (cf. \cite[\S5.7]{MR3413977}).

\subsection{Thurston measure on $\mathcal{PMF}$}
\label{subsec:Thurstonmeasure}
For $x\in \teich_{g,m}$,
we define the \emph{unit sphere in terms of the extremal length function}
by
$$
\mathcal{SMF}_x=\partial \mathcal{BMF}_x=\{H\in \mathcal{MF}\mid \ext_{x}(H)=1\}.
$$
The projection $\mathcal{MF}-\{0\}\to \mathcal{PMF}$
induces a homeomorphism
$\proje_x\colon \mathcal{SMF}_x\to \mathcal{PMF}$.
We define a probability measure $\PThursM^x$ on
$\mathcal{SMF}_{x}$ by the cone construction
\begin{equation}
\label{eq:Thurston-measure-SMF}
\PThursM^{x}(E)=
\frac{
\ThursM(\{tG\mid G\in E,\ 0\le t\le 1\})}{{\rm Vol}_{Th}(x)}
\quad (E\subset \mathcal{SMF}_{x}).
\end{equation}
In this paper, we also call $\PThursM^{x}$ the \emph{Thurston measure} associated with $x\in \teich_{g,m}$ (cf. \cite[\S2.3.1]{MR2913101}). 
For $x,y\in \teich_{g,m}$,  a homeomorphism
\begin{equation*}
\proje_{x,y}\colon
\mathcal{SMF}_{x}
\ni G\to \frac{G}{\ext_{y}(G)^{1/2}}
\in \mathcal{SMF}_{y}.
\end{equation*}
induces
\begin{equation}
\label{eq:push-forward-marking_change}
(\proje_{x,y}^{-1})_*(\PThursM^{y})(E)=\PThursM^{y}(\proje_{x,y}(E))=\frac{{\rm Vol}_{Th}(x)}{{\rm Vol}_{Th}(y)}\int_E\frac{1}{\ext_y(F)^{\convgenus}}\PThursM^{x}(F)
\end{equation}
for a measurable set $E\subset \mathcal{SMF}_x$ from \eqref{eq:multple-TH-measure}.
Via the identification $\proje_x\colon \mathcal{SMF}_x\to \mathcal{PMF}$, we also regard $\PThursM^{x}$ as a probability measure on $\mathcal{PMF}$.

Notice that recently, the factor ${\rm Vol}_{Th}(x)/{\rm Vol}_{Th}(y)$ in \eqref{eq:push-forward-marking_change} is known to be equal to one by Dumas \cite{MR3413977}. However, our Poisson integral formula is proved without assuming Dumas' result and also gives another approach to it. Hence, we put the factor in \eqref{eq:push-forward-marking_change} (cf. Corollary \ref{coro:Hubbard-Masur-constant}).

%

\subsection{Push-forward measure on the Bers boundary}
\label{subsec:pushforwardmeasure-Bers-boundary}
For $x\in \teich_{g,m}$,
we set $\mathcal{SMF}^{mf}_{x}= \proje_x^{-1}(\mathcal{PMF}^{mf})$
and $\mathcal{SMF}^{ue}_{x}=\proje_x^{-1}(\mathcal{PMF}^{ue})$.
We define a probability (Borel) measure $\pushThursMBers_x$ on $\partial \Bers{x_0}$
as the pushforward measure of the Thurston measure $\PThursM^x$ via $\Xi_{x_0}\circ \proje_x\colon \mathcal{SMF}_x\to \partial\Bers{x_0}$:
\begin{equation}
\label{eq:defintion-pushforward-Th}
\int_{\partial\Bers{x_0}}f\,d\pushThursMBers_x
=\int_{\mathcal{SMF}_x}f\circ (\Xi_{x_0}\circ \proje_x)d
\PThursM^x
\end{equation}
for continuous functions $f$ on $\partial\Bers{x_0}$.
The superscript ``B" stands for the initial letter of ``Bers".
Masur \cite{MR644018} showed that
$\mathcal{SMF}_x^{ue}$ is of full measure in $\mathcal{SMF}_x$
with respect to $\PThursM^x$.
Hence,
the composition $f\circ (\Xi_{x_0}\circ \proje_x)$ is defined 
almost everywhere on $\mathcal{SMF}_x$.
Masur's observation also implies that $\partial^{ue}\Bers{x_0}$ is a set of full measure in $\partial\Bers{x_0}$
with respect to the pushforward measure
$\pushThursMBers_x$.

When we specify the base point $x_0$ of the Bers slice,
we denote by ${\boldsymbol \mu}^{B,x_0}_x$ instead of $\pushThursMBers_x$
(we only use this notation here).
The measure $\pushThursMBers_x$ is independent of the base point
of the Bers slice
in the sense that
\begin{equation*}
\int_{\partial\Bers{x_1}}f\circ \beta_{x_1,x_2}\,d{\boldsymbol \mu}^{B,x_1}_x
=
\int_{\partial\Bers{x_2}}f\,d{\boldsymbol \mu}^{B,x_2}_x
\end{equation*}
for any continuous function $f$ on $\partial \Bers{x_2}$
because $\beta_{x_1,x_2}\circ \Xi_{x_2}=\Xi_{x_1}$
(cf. \S\ref{subsec:BoundarygroupswithoutAPT}).

\section{Transverse measures and currents}
\subsection{Currents}
Let $M$ be a closed Riemann surface of genus $g(M)$ with finite marked points $P$.
Recall that a $1$-\emph{dimensional current} $T$ on $M$ is an element of the dual of the space of smooth $1$-forms (see e.g. \cite[\S3.1]{MR760450}).
By convention, we denote by $T\wedge\gamma$ the value of a form $\gamma$ by $T$. We set $\gamma\wedge T$ to be $-T\wedge \gamma$. Any closed one form $\xi$ on $M$ is thought of as a current such that
$$
\xi\wedge \gamma =\iint_M\xi\wedge \gamma.
$$

Let $\omega=\alpha+i\beta$ be a holomorphic $1$-form on $M$. Let $Z(\omega)$ be the union of zeros of $\omega$ and $P$. Let $v(\omega^2)$ be the vertical foliation of $\omega$ (cf. \S\ref{subsec:Hubbard-masur-theorem}). The leaves of $v(\omega^2)$ is oriented so that $\beta>0$.

Following McMullen \cite{MR4168686}, we define a current $\xi_H$ for $H\in \mathcal{MF}_{L(v(\omega^2))}$ as follows. Let $\{R_i\}_{i\in I}$ be a decomposition of $M$ by rectangles with respect to the flat stucture of $\omega$ such that the interior of each $R_i$ is contained in the complement of $Z(\omega)$. In the affine coordinates, $R_i$ is assumed to be represented as $[0,a_i]\times [0,b_i]$ in $\mathbb{R}^2\cong \mathbb{C}$.
The transverse measure $\mu=\mu_H$ of $H\in \mathcal{MF}_{v(\omega^2)}$ defines a measure on $\mu_{\tau_i}$ on $\tau_i=[0,a_i]\times \{0\}\subset \partial R_i$. For $x\in [0,a_i]$, we set $\ell_x$ the oriented vertical segment in $R_i$ emanating $(x,0)$. We define a current $T_H$ by 
\begin{equation}
\label{eq:def_current}
T_H\wedge\gamma=\sum_{i\in I}\int_0^{a_i}\left(\int_{\ell_x}\gamma\right)d\mu_{\tau_i}(x)
\end{equation}
for a smooth one form $\gamma$ on $M$. When we specify the transverse measure, we write $T_\mu$ instead of $T_H$. Since $\mu$ is a transverse measure, we can check that $T_H$ is defined independently of the choice of the rectangle decompositions. We also see that 
$$
dT_H=0,\quad (f\alpha)\wedge T_H=0,\quad T_H\wedge (g\beta)\ge 0
$$
for any smooth functions $f$ and $g$ with $g\ge 0$ on $X$ (cf. \cite[Proposition 3.1]{MR4168686}).
In particular,
\begin{equation}
\label{eq:intersection_omega}
T_H\wedge \beta =\sum_{i\in I}b_i \mu_{\tau_i}(\tau_i)= i(H, h(\omega^2))>0.
\end{equation}
where $h(\omega^2)$ is the horizontal foliation of $\omega$.

\subsection{Periods of currents}
Let $C\in H_1(M;\mathbb{R})$, let $\repro_C$ be the reproducing differential of the homology class $C$. Namely, 
$$
\int_C\alpha=-\iint_M\gamma\wedge \repro_C
$$
for all $C^1$-closed form $\gamma$ on $M$ (cf. \cite[\S II.3]{MR1139765}). Notice that
$$
C_1\cdot C_2=\iint_M\repro_{C_1}\wedge\repro_{C_2}=\int_{C_1}\repro_{C_2}
$$
for $C_1,C_2\in H_1(M;\mathbb{R})$, where the dot means the algebraic intersection number.
We define the period of the closed current $T$ along the homology class $C$ by
$$
\int_CT=-T\wedge \repro_C.
$$
In particular, for a canonical homology basis $\{A_j,B_j\}_{j=1}^{g(M)}$ on $H_1(M;\mathbb{R})$,
\begin{equation}
\label{eq:Twedgegamma}
T\wedge \gamma=\sum_{j=1}^{g(M)}\left\{\int_{A_j}T\int_{B_j}\gamma - \int_{B_j}T\int_{A_j}\gamma\right\}
\end{equation}
for a closed current $T$ and a smooth closed one-form $\gamma$ on $M$ (see the proof of \cite[III.2.3 Proposition]{MR1139765}). Equation \eqref{eq:Twedgegamma} induces the wedge product $T\wedge \xi$ between current $T$ and $\xi\in H^1(M,\mathbb{R})$.

A closed curve $\gamma\colon \mathbb{S}^1\to M$ (with marked point $P$) is said to be \emph{quasi-transversal} to the vertical foliation $v(\omega^2)$ if at every point $t\in \mathbb{S}^1$, either $\gamma(t)$ is in $Z(\omega)$, or $\gamma$ is locally near $t$ transversal to the underlying foliation of $v(\omega^2)$, or an inclusion into a leaf of the underlying foliation.
An oriented $C^1$-curve $c$ on $M$ is said to be \emph{decreasing} with respect to $\omega$ if $\alpha<0$ along $c$.
A quasi-transversal closed curve $c$ said to be \emph{decreasing} if all of its transversal parts are decreasing. The following is essentially due to Hubbard and Masur \cite{MR523212}.
\begin{proposition}
\label{prop:increasing_path}
Suppose a homology class $C$ is represented by a decreasing quasi-transvesal simple closed curve $c$.
Then,
$$
T_H\wedge \eta_{C}=i(c,H)
$$
for $H\in \mathcal{MF}_{L(v(\omega^2))}$.
\end{proposition}

\begin{proof}
Let $\mu=\mu_H$ be the transverse measure to $H$.
We cover $c$ by closed rectangles $\{R_i\}_{i=1}^N$ in terms of $\omega$ such that (1) ${\rm Int}(R_i)\cap {\rm Int}(R_j)=\emptyset$ for $i\ne j$ and the interior of each $R_i$ does not contains critical points and points of $P$; (2) the vertical segment parts of $c$ is contained in $\partial R_i$ for some $i\in I$, and; (3) $c$ intersects $\partial R_i$ only at the interiors of the vertical edges of $R_i$ for all $i=1$, $\cdots$, $N$. Let $N$ be a sufficiently narrow annular neighborhood of $c$ such that $N$ is contained in the interior of the union $R=\cup_{i=1}^N R_i$ and $N$ intersects $\partial R_i$ only at the interior of the vertical edges of $R_i$. We may assume that for $i\in I$, any component of $N\cap R_i$ is either one which connects two vertical sides or a half-disk neighborhood of a critical point or a point in $P$ in $\partial R_i$ (cf. Figure \ref{fig:intersection_rectangle}).
\begin{figure}[t]
\includegraphics[height = 3cm]{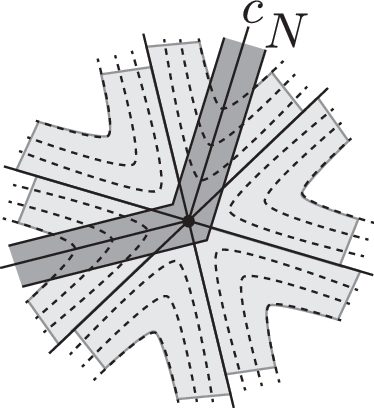}
\caption{Intersection between the neighborhood $N$ and the rectangles around a zero of $\omega$.}
\label{fig:intersection_rectangle}
\end{figure}

Let $U_0\cup U_1=R\setminus N$ such that $U_0$ lies the left of $c$. Under this convention, an oriented vertical segment in an rectangle $R_i$ intersects $c$ goes from $U_0$ to $U_1$ if $c$ is decreasing. Take a $C^\infty$ function $\varphi$ on $R$ such that $0\le \varphi \le 1$, $\varphi=0$ on $U_0$ and $\varphi=1$ on $U_1$. Then, $\eta_{C}$ is cohomologous to the closed form on $M$ defined by setting $d\varphi$ on $R$ and $0$ otherwise (cf. \cite[\S II.3]{MR1139765}).

Suppose $R_i$ is represented as $[0,a_i]\times [0,b_i]$ on $\mathbb{R}^2\cong \mathbb{C}$ under the flat structure of $\omega$. Let $\tau_i$ be the bottom horizontal edge of $R_i$. Let $\ell_x$ be the vertical segment in $R_i$ emanating $(x,0)$. By definition, 
\begin{align*}
T_H\wedge \eta_{C}
&=\sum_{i=1}^N\int_0^{a_i}\left(\int_{\ell_x}\eta_{C}\right)d\mu_{\tau_i}(x)=\sum_{i=1}^N\int_0^{a_i}\left(\int_{\ell_x}d\varphi\right)d\mu_{\tau_i}(x) \\
&= \sum_{i}\int_0^{a_i}d\mu_{\tau_i}(x) =\mu_H(c)=i(c,H),
\end{align*}
where the first term of the second line is given by summing all component of $N\cap R_i$ which connects vertical sides of $R_i$.
Notice in the above calculation that a component of $N\cap R_i$ which is a half-disk neighborhood of a point in $Z(\omega)$ in $\partial R_i$ does not contribute in the integration.
\end{proof}

\begin{proposition}
\label{prop:case-F}
Under the above notation, $T_{v(\omega^2)}={\rm Re}(\omega)$.
\end{proposition}
\begin{proof}
We use the notation around \eqref{eq:def_current}.
By definition,, $\mu_{\tau_i}=|dx|$ for each $\tau_i$. For any smooth one form $\gamma=Adx+Bdy$ on $M$,
\begin{align*}
T_{v(\omega^2)}\wedge \gamma
&=\sum_{i\in I}\int_0^{a_i}\left(\int_{\ell_x}\gamma\right)|dx| =\sum_{i\in I}\int_0^{a_i}\left(\int_{\ell_x}B dy\right)|dx| \\
&=\sum_{i\in I}\int_0^{a_i}\int_0^{b_i}B dxdy 
=\sum_{i\in I}\iint_{R_i} {\rm Re}(\omega)\wedge \gamma =\iint_M {\rm Re}(\omega)\wedge \gamma,
\end{align*}
which implies what we wanted.
\end{proof}

\subsection{Double branched covering spaces}
\label{subsec:double-branched-covering-space}
Let $x_{0}=(M_0,f_0)\in \teich_{g,m}$ and $q_0\in \mathcal{Q}_{x_0}$.
Let $\pi_{q_{0}}\colon \tilde{M}_{q_{0}}\to M_{0}$ be the double covering space associated to $\sqrt{q_{0}}$,
and $i_{q_{0}}\colon \tilde{M}_{q_{0}}\to \tilde{M}_{q_{0}}$ the covering transformation.
The lift of $\sqrt{q_{0}}$
defines a holomorphic $1$-form $\omega_{q_{0}}$ on $\tilde{M}_{q_{0}}$.
For $\mathbb{K}=\mathbb{Z}$, $\mathbb{R}$ or $\mathbb{C}$, we denote by $H_1(\tilde{M}_{q_0};\mathbb{K})^\pm$ the eigen space of the action of $i_{q_0}$ for the eigen value $\pm 1$.
%

Suppose that $q_0$ is not a square of an Abelian differential. For $H\in \mathcal{MF}_{L(v(q_0))}$, the transverse measure $\mu$ to the underlying foliation of $v(q_0)$ for $H$  is lifted to that for the vertical foliation $\tilde{\mu}$ of $\omega_{q_0}$ which is equivariant to the action of the involtion $i_{q_0}$. Namely, Let $\tau$ be a transverse arc to the vertical foliation of $\omega_{q_0}$. Then, for any measurable set $E\subset \tau$, $\tilde{\mu}_{i_{q_0}(\tau)}(i_{q_0}(E))=\tilde{\mu}_{\tau}(E)$. Therefore,
the current $T_{\tilde{\mu}}$ defined by the lift $\tilde{\mu}$ satisfies
$$
T_{\tilde{\mu}}(i_{q_0}^*(\gamma))=-T_{\tilde{\mu}}(\gamma)
$$
for any smooth one form $\gamma$ on $\tilde{M}_{q_0}$ since the the action of $i_{q_0}$ reverses the orientation of the leaves of the vertical foliation of $\omega_{q_0}$. This means that $T_{\tilde{\mu}}$ vanishes on $H_1(\tilde{M}_{q_0};\mathbb{K})^+$.

\section{Holomorphic coordinates associated to extremal lengths}
\label{sec:holomorphic-coordinate-ext}

\subsection{Double coverings for essentially complete measured foliations}
Henceforth,
we assume that $F\in \mathcal{MF}$ is essentially complete. In this case,
the section
$$
\teich_{g,m}\ni x \mapsto q_{F,x}\in\mathcal{Q}_{g,m}
$$
is smooth.

Fix $x_0=(M_0,f_0)\in \teich_{g,m}$ and set $q_0=q_{F,x_0}$.
For all $x=(M,f)\in \teich_{g,m}$,
the differential $q_{F,x}$ is generic and has $4g-4+m$ simple zeros and $m$ simple poles at marked points. 
The genus of $\tilde{M}_{q_0}$ is equal $\convgenus+g$ where $\convgenus=3g-3+m$.
Let $\tilde{Z}(q_0)\subset \tilde{M}_{q_0}$ be the set consisting of zeros of $\omega_{q_0}$ and the preimages of marked points of $M$. Notice from the discussion in \cite[Proposition 2.6]{MR523212} that $H_1(\tilde{M}_{q_0};\mathbb{K})^-$ and $H_1(\tilde{M}_{q_0},\tilde{Z}(q_0);\mathbb{K})^-$ are naturally isomorphic because of the exact sequence
\begin{equation*}
\label{eq:exact_sequence}
H_1(\tilde{Z}(q_0);\mathbb{K})^-
\to H_1(\tilde{M}_{q_0};\mathbb{K})^-
\to H_1(\tilde{M}_{q_0},\tilde{Z}(q_0);\mathbb{K})^-
\to H_0(\tilde{Z}(q_0);\mathbb{K})^-,
\end{equation*}
and $H_k(\tilde{Z}(q_0);\mathbb{K})^-=\{0\}$ for $k=0$, $1$,
since the involution $i_{q_0}$ fixes every points of $\tilde{Z}(q_0)$.

For any $x\in \teich_{g,m}$, there is a natural bijection between the set of transverse measures to the underlying foliation of $F$ and that to the underlying foliation of the vertical foliation of $q_{F,x}$. Hence, for any $x=(M,f)\in \teich_{g,m}$ and $H\in \mathcal{MF}_{L(F)}$, we define a current $T_{H,x}$ on the set of smooth one forms on $\tilde{M}_{q_{F,x}}$ associated to the lift $\tilde{\mu}$ of the transverse measure  $\mu$ for $H$.

\subsection{Periods of currents revisited}
 Since the Teichm\"uller space is contractible, the surface bundle $\cup_{(M,f)\in \teich_{g,m}}M$ is trivial (cf. \cite{MR0430318}). 
 
For $x=(M,f)\in \teich_{g,m}$, $M\setminus Z(q_{F,x})$ is the Riemann surface of type $(g,4g-4+2m)$ where $Z(q_{F,x})$ is the zeros of $q_{F,x}$ since each $q_{F,x}$ is generic. The zeros of the differentials $q_{F,x}$ ($x\in \teich_{g,m}$) define mutually disjoint $(4g-4+2m)$-smooth sections of the surface bundle $\cup_{(M,f)\in \teich_{g,m}}M$. This means that the zeros and the poles of $q_{F,x}$ is marked (labeled). By taking the double branched covering space along the sections, we obtain the surface bundle $\cup_{x\in \teich_{g,m}}\tilde{M}_{q_{F,x}}\to \teich_{g,m}$ which is a trivial bundle. 
Let  $\tilde{\Sigma}_F\to \Sigma_{g,m}$ be the double branched covering associated to $F$, which is branched at the singularities of $F$ (see \cite[\S4]{MR644018}).
Then, there is a homeomorphism $\tilde{f}_x\colon \tilde{\Sigma}_F\to \tilde{M}_x$ respecting the trivialization commutes the diagram
\begin{equation}
\label{eq:commute_double_covering}
\begin{CD}
\tilde{\Sigma}_F@>{\tilde{f}_x}>> \tilde{M}_{q_{F,x}} \\
@VVV @VVV \\
\Sigma_{g,m}@>{f}>> M
\end{CD}
\end{equation}
after an appropriate choice of the marking $f$ of $x=(M,f)$ so that $f$ maps the (marked) singularities of $F$ to those of $q_{F,x}$. The homeomorphism $\tilde{f}_x$ induces the identification
\begin{equation}
\label{eq:identification_homology_cohomology}
H_1(\tilde{M}_{q_{F,x}};\mathbb{K})^\pm\cong H_1(\tilde{\Sigma}_F;\mathbb{K})^\pm,\quad
H^1(\tilde{M}_{q_{F,x}};\mathbb{K})^\pm\cong H^1(\tilde{\Sigma}_F;\mathbb{K})^\pm
\end{equation}
for $x\in \teich_{g,m}$ and $\mathbb{K}=\mathbb{Z}$, $\mathbb{R}$ and $\mathbb{C}$, which are compatible with the duality. Namely, we denote by the pairing
$$
\int_C\xi
$$
for $C\in H_1(\tilde{M}_{q_{F,x}};\mathbb{K})$ and $\xi\in H^1(\tilde{M}_{q_{F,x}};\mathbb{K})$.
After identifying $H_1(\tilde{M}_{q_{F,x}};\mathbb{K})^\pm\cong H_1(\tilde{\Sigma}_F;\mathbb{K})^\pm$ (via the trivialization of the surface bundle), 
$$
\int_C\xi=\int_C\xi'
$$
when $\xi\in H^1(\tilde{M}_{q_{F,x}};\mathbb{K})$ and $\xi'\in H^1(\tilde{\Sigma}_F;\mathbb{K})$ are identified.

\begin{proposition}
\label{prop:locally_constant}
For any $C\in H_1(\tilde{\Sigma}_F;\mathbb{K})^-$ and $H\in \mathcal{MF}_{L(F)}$, 
the period of $T_{H,x}$ along $C$ is dependent only on $H$ and $C$.
\end{proposition}

\begin{proof}
Indeed, this proposition is proved by using the same argument as that by Hubbard and Masur in \cite[Proposition 4.3]{MR523212}. We shall sketch the proof.

It suffices to show that the period of $C$ is locally constant because the Teichm\"uller space is connected.

Let $x=(M,f)\in \teich_{g,m}$.
For a simple closed curve $\alpha\in \mathcal{S}$ and $x=(M,f)\in \teich_{g,m}$, we define a decreasing quasi-transverse curve $\tilde{\alpha}=\tilde{\alpha}_{F,x}$ on $\tilde{M}_{q_{F,x}}$ as follows. We represent $\alpha$ as a quasi-transversal curve to the vertical foliation $v(q_{F,x})$ (we may take the geodesic representative of $\alpha$ with respect to the flat metric $|q_{F,x}|$. See \cite[Theorem 17.4]{MR743423}). Then, $\tilde{\alpha}=\tilde{\alpha}_{F,x}$  is defined by taking a lift of $\alpha$ and by orienting transverse segments in the lift so that they are decreasing.
By applying the similar argument in \cite[Chapter II, S4]{MR523212}, we can see that 
$$
H_1(\tilde{M}_{q_{F,x}};\mathbb{R})^-\cong H_1(\tilde{M}_{q_{F,x}},\tilde{Z}(q_{F,x});\mathbb{R})^-
$$
is generated by $\{\tilde{\alpha}_{F,x}\mid \alpha\in \mathcal{S}\}$. We modify $\tilde{\alpha}_{F,x}$ at the self intersection points in $\tilde{M}_{q_{F,x}}\setminus \tilde{Z}(q_{F,x})$ as Figure \ref{fig:modification}, so that $\tilde{\alpha}_{F,x}$ can be thought of as a union of decreasing quasi-transversal simple closed curves since the modification does not change the homology class (see the proof of \cite[Proposition 2.2]{MR523212}). From Proposition \ref{prop:increasing_path}, the period of the lift $\tilde{\alpha}=\tilde{\alpha}_{F,x}$ of $\alpha$ is equal to $2i(F,\alpha)$. 
\begin{figure}[t]
\includegraphics[height=3cm]{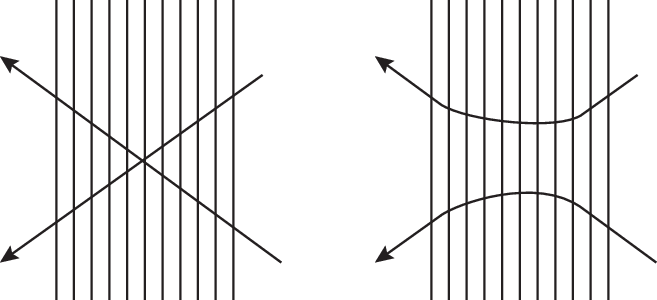}
\caption{Modification at self intersection points of two decreasing paths.}
\label{fig:modification}
\end{figure}
Since 
the decreasingness of (simple) closed curves is invariant under a (small) deformation, the period is a constant function around $x$.
\end{proof}

\subsection{Embedding into $H^1(\tilde{M}_{q_0};\mathbb{R})^-$}
In this section, we identify two branched coverings $\tilde{M}_{q_0}\to M_0$ and $\tilde{\Sigma}_F\to \Sigma_{g,m}$ via \eqref{eq:commute_double_covering}.
Under the identifications \eqref{eq:identification_homology_cohomology}, we define $\mathcal{H}_F\colon \teich_{g,m}\to H^1(\tilde{M}_{q_{0}};\mathbb{R})^-$
by
\begin{equation}
\label{eq:embedding-HF}
\mathcal{H}_F(x)=[{\rm Im}(\omega_{q_{F,x}})]
\end{equation}
where $[{\rm Im}(\omega_{q_{F,x}})]$ is the cohomology class in $H^1(\tilde{M}_{q_{0}};\mathbb{R})^-$ corresponding to the closed one form ${\rm Im}(\omega_{q_{F,x}})$. From Proposition \ref{prop:locally_constant} and \eqref{eq:Twedgegamma},
$$
T_{H,x_0}\wedge \mathcal{H}_F(x)=T_{H,x}\wedge {\rm Im}(\omega_{q_{F,x}})
$$
for $H\in \mathcal{MF}_{L(F)}$ and $x\in \teich_{g,m}$.

The following theorem is recognized as a counter part to Bonahon's theory
on the shearing coordinates on Teichm\"uller space
(cf. \cite[Theorem 20]{MR1413855}.
See also \cite[Th\'eo\`eme 6.1]{MR3695474}).

\begin{theorem}[Embedding]
\label{thm:embedding-H1}
The mapping $\mathcal{H}_F\colon \teich_{g,m}\to
H^1(\tilde{M}_{q_0};\mathbb{R})^-\cong H^1(\tilde{\Sigma}_{F};\mathbb{R})^-$ is a smooth embedding.
The image of $\mathcal{H}_F$ is the open cone 
$$
\mathcal{C}_{F}=\{\xi\in H^1(\tilde{M}_{q_0};\mathbb{R})^-
\mid
T_{H,x_0}\wedge \xi>0\
\mbox{for $H\in \mathcal{MF}_{L(F)}$}\}.
$$
\end{theorem}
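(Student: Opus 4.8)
The plan is to factor $\mathcal{H}_F$ through the Hubbard--Masur parametrization and then to read off its image and its regularity from the period classes of $\omega_{q_{F,x}}$. Write $G_x=h(q_{F,x})$ for the horizontal foliation; since $v(q_{F,x})=F$ is held fixed, the Hubbard--Masur homeomorphism \eqref{eq:Hubbard-Masur-homeo} together with the characterization of the image of $q\mapsto(h(q),v(q))$ by transversality \eqref{eq:fillingup} shows that $x\mapsto G_x$ is a homeomorphism of $\teich_{g,m}$ onto the set $\mathcal{T}_F$ of $G\in\mathcal{MF}$ transverse to $F$. Because $F$ is essentially complete, $L(F)$ is maximal, so $i(H,F)=0$ holds exactly when $H\in\mathcal{MF}_{L(F)}$; hence $\mathcal{T}_F=\{G: i(G,H)>0\ \text{for all }H\in\mathcal{MF}_{L(F)}\setminus\{0\}\}$. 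Thus the whole theorem reduces to understanding the map $G_x\mapsto\mathcal{H}_F(x)$ on $\mathcal{T}_F$.

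First I would record the basic pairing identity. By the naturality of the cycle maps under change of marking (\eqref{eq:naturality1} and \eqref{eq:naturality3}) and the compatibility of the trivialization \eqref{eq:canonical-identification-from-trivialization} with $(\tilde{h}_{x_0,x})_*$, transporting everything to the fixed cover $\tilde{M}_{q_0}$ and applying Lemma \ref{lem:horizontal-folition-algebraic-intersectionnumber} gives, for every $H\in\mathcal{MF}_{L(F)}$,
\[
\mathcal{H}_F(x)\cdot {\bf c}_{v,q_0}(\nu^v_H)=2\,i(h(q_{F,x}),H).
\]
Two consequences follow at once. The right-hand side is positive for all nonzero $H\in\mathcal{MF}_{L(F)}$ by transversality, so $\mathcal{H}_F(x)\in\mathcal{C}_F$ and the image lies in the cone; and the identity shows that $\mathcal{H}_F(x)$ determines the restriction $i(h(q_{F,x}),\cdot)|_{\mathcal{MF}_{L(F)}}$.

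For smoothness and the immersion property I would pass to periods. By Corollaries \ref{coro:poincare-duality-vertial-foliation} and \ref{coro:poincare-duality-horizontal-foliation}, after the trivialization the class $\mathcal{H}_F(x)$ is Poincar\'e dual to $[\mathrm{Im}\,\omega_{q_{F,x}}]$, while $[\mathrm{Re}\,\omega_{q_{F,x}}]$ is dual to the $x$-independent class represented by $F=v(q_{F,x})$, since the real periods over $\tilde{\alpha}_{q_0}$ equal $2i(\alpha,F)$ for every $x$ by \eqref{eq:naturality1}. Hence the full anti-invariant period class $[\omega_{q_{F,x}}]\in H^1(\tilde{M}_{q_0};\mathbb{C})^-$ has constant real part and imaginary part $\mathcal{H}_F(x)$. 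The section ${\bf q}_F$ is smooth, so the periods depend real-analytically on $x$, giving smoothness of $\mathcal{H}_F$. Since the zeros and poles of $q_{F,x}$ retain constant orders, the $\omega_{q_{F,x}}$ sweep out a single stratum on the fixed cover, and period coordinates on a stratum are a local biholomorphism onto $H^1(\tilde{M}_{q_0};\mathbb{C})^-$ (using the identifications \eqref{eq:isomorphisms}); restricting to the slice of constant real part identifies $\mathcal{H}_F$ with the imaginary-period coordinate. Therefore $\mathcal{H}_F$ is a local diffeomorphism into $\mathcal{C}_F$ between spaces of the same dimension as $\teich_{g,m}$, in particular an immersion, and local injectivity is automatic.

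The remaining bijectivity onto $\mathcal{C}_F$ I would obtain by a soft topological argument. The cone $\mathcal{C}_F$ is the intersection of the open half-spaces $\{C: C\cdot {\bf c}_{v,q_0}(\nu^v_H)>0\}$, hence an open convex cone, so it is connected and simply connected. Once I know that $\mathcal{H}_F\colon\teich_{g,m}\to\mathcal{C}_F$ is proper, the local diffeomorphism above makes $\mathcal{H}_F$ a covering map onto $\mathcal{C}_F$; as $\teich_{g,m}$ is connected and $\mathcal{C}_F$ is simply connected, the covering has a single sheet, so $\mathcal{H}_F$ is a homeomorphism onto $\mathcal{C}_F$, and being a local diffeomorphism it is the desired smooth embedding. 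The substantial point, which I expect to be the main obstacle, is properness: I must show that if $x_n$ leaves every compact subset of $\teich_{g,m}$, then $\mathcal{H}_F(x_n)$ leaves every compact subset of $\mathcal{C}_F$, i.e. either $\|\mathcal{H}_F(x_n)\|\to\infty$ or $\mathcal{H}_F(x_n)\cdot {\bf c}_{v,q_0}(\nu^v_H)=2\,i(h(q_{F,x_n}),H)\to 0$ for some nonzero $H\in\mathcal{MF}_{L(F)}$. I would deduce this from the Hubbard--Masur homeomorphism $x\mapsto h(q_{F,x})$ onto $\mathcal{T}_F$: such a sequence forces $G_n=h(q_{F,x_n})$ to leave compacta of $\mathcal{T}_F$, that is, to degenerate in $\mathcal{MF}$ or to approach $\partial\mathcal{T}_F$, and the defining inequalities of $\mathcal{T}_F$ then match precisely the escape of the pairings toward $0$ (or of the norm toward $\infty$) through the intersection-number identity above.
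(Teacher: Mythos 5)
Your plan is structurally sound, and it departs from the paper in one genuinely useful way: where the paper proves injectivity directly (choosing curves $\beta_1,\dots,\beta_N$ that determine measured foliations and moving them onto $\tau_{v,q_0}$ by a power of a pseudo-Anosov map), you obtain injectivity and surjectivity simultaneously from a covering-space argument over the convex, hence simply connected, cone $\mathcal{C}_F$. That reduction is correct \emph{provided} its two inputs --- local diffeomorphism and properness --- are supplied, and neither is, as written. The immersion step is the first gap: you identify $\mathcal{H}_F$ with the imaginary-period coordinate on the slice of constant real periods and conclude it is a local diffeomorphism, but the identification of $\teich_{g,m}$ with that slice is, at this stage, only the Hubbard--Masur \emph{homeomorphism}. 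A smooth injective map between manifolds of equal dimension need not have injective differential, so invariance of domain gives you a smooth local homeomorphism, not an immersion --- and ``smooth embedding'' in Theorem \ref{thm:embedding-H1} requires the immersion. The gap is fillable, but you must say how: for instance, $x\mapsto q_{F,x}$ is a smooth \emph{section} of $\mathcal{Q}_{g,m}\to\teich_{g,m}$, hence has injective differential; its image lies in the locus of constant real periods; and $d(\mathrm{Im}\mbox{-periods})$ is injective on $\ker d(\mathrm{Re}\mbox{-periods})$ because the full period map is a local diffeomorphism. The paper instead proves the non-degeneracy by hand in \S\ref{subsec:complex_structure}, computing $\Phi_*$ from the variational formula \eqref{eq:differential-formula-period} and using the positive definiteness of ${\rm Im}(\Pi)$.

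The second gap is properness, which you rightly flag as the main obstacle but then close with one vague sentence. Your displayed identity pairs $\mathcal{H}_F(x)$ only against ${\bf c}_{v,q_0}(\nu^v_H)$ with $H\in\mathcal{MF}_{L(F)}$, and such pairings cannot detect every escape to infinity. Concretely, take $F$ in addition uniquely ergodic, so that $\mathcal{MF}_{L(F)}=\{tF\mid t\ge 0\}$; then along the noncompact horosphere $\{x\in\teich_{g,m}\mid \ext_x(F)=1\}$ every pairing in your display is constant (equal to $2t$), yet a sequence $x_n$ escaping to infinity along it must have $\mathcal{H}_F(x_n)$ leaving every compact subset of $\mathcal{C}_F$ --- necessarily through $\|\mathcal{H}_F(x_n)\|\to\infty$, which your identity simply does not see. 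What closes this case is the equality $\mathcal{H}_F(x)\cdot\tilde{\beta}_{q_0}=2\,i(h(q_{F,x}),\beta)$ for simple closed curves $\beta$ carried by $\tau_{v,q_0}$ (the paper's \eqref{eq:intersectionnumber-horizontal}), together with the openness of the set of foliations carried by $\tau_{v,q_0}$: this produces a curve $\beta$ with $i(G_\infty,\beta)>0$ for any projective limit $[G_\infty]$ of the horizontal foliations, so the corresponding homology pairing blows up. This is exactly how the paper's proof uses \eqref{eq:upper-HF}. So your reduction of properness to ``the defining inequalities of $\mathcal{T}_F$'' works only for the boundary-approach mode of escape (where your use of essential completeness and Rees's theorem is correct, and matches the paper's \eqref{eq:filling-limit} argument); the norm-blow-up mode requires pairings against a homologically spanning family of curve classes, which your proposal never introduces.
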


\begin{proof}
The proof of the non-singularity of the differential of $\mathcal{H}_F$ is postponed to \S\ref{subsec:complex_structure} for the sake of readability. 
From \eqref{eq:intersection_omega}, the image of $\mathcal{H}_F$ is contained in the cone $\mathcal{C}_{F}$.
We only check here the properness of the mapping $\mathcal{H}_F$.

Let $K\subset \mathcal{C}_{F}$ be a compact set.
Since intersection number is continuous and $\mathcal{C}_F$ is an open cone with vertex $0$,
from \eqref{eq:intersection_omega},
there are $\epsilon_0=\epsilon_0(K)>0$ and $\epsilon_1=\epsilon_1(K)>0$ such that 
\begin{equation}
\label{eq:filling-limit}
\epsilon_0\le T_{H,x_0}\wedge \mathcal{H}_F(x)=2i(h(q_{F,x}),H)\le \epsilon_1
\end{equation}
for $x\in \teich_{g,m}$ with $\mathcal{H}_F(x)\in K$,
and $H\in \mathcal{MF}_{L(F)}\cap \mathcal{SMF}_{x_0}$.

Let $\{x_n=(M_n,f_n)\}_{n=1}^\infty\subset\teich_{g,m}$ with $\mathcal{H}_F(x_n)\in K$
for $n\in \mathbb{N}$. 
Let $H_n=h(q_{F,x_n})\in \mathcal{MF}$ be the horizontal foliation of $q_{F,x_n}$. Then, there is $t_n>0$ such that $t_nH_n$ converges to $G\in \mathcal{MF}-\{0\}$. From \eqref{eq:filling-limit}, $t_n$ does not diverge.
We claim
\begin{claim}
\label{claim:1-0}
The sequence $\{t_n\}_{n=1}^\infty$ is bounded from below.
\end{claim}
We postpone the proof of Claim \ref{claim:1-0} after finishing the proof of this theorem.
From Claim \ref{claim:1-0}, We may assume that $H_n$ converges to $G'=t_0G$ for some $t_0>0$.
From \eqref{eq:filling-limit}, $i(G',F)\ne 0$ and there is $x_\infty\in \teich_{g,m}$ such that $h(q_{F,x_\infty})=G'$. Since $h(q_{F,x_n})=H_n\to G'$, $q_{F,x_n}\to q_{F,x_\infty}$ as $n\to \infty$. Threfore $x_n\to x_\infty$. Thus, we conclude that $\mathcal{H}_F$ is proper.
\end{proof}

\begin{proof}[Proof of Claim \ref{claim:1-0}]
Suppose to the contrary that $t_n\to 0$.
From \eqref{eq:filling-limit},
$$
i(G,F)=\lim_{n\to \infty}t_ni(H_n,F)=0.
$$
Therefore, $F$ and $G$ are topologically equivalent, that is, the underlying foliations of $F$ and $G$ are isotopic  (cf. \cite{MR662738}).

Let $\tau$ be a complete train track on $\Sigma_{g,m}$ carrying $F$. Since $F$ is essentially complete, $F$ is thought of as an interior point of the transverse measures on $\tau$ (cf. \cite[Lemma 3.1.2]{MR1144770}). We may assume that each component of $\Sigma_{g,m}\setminus \tau$ contains a unique point of $P$.
Let $V(\tau)\subset \mathcal{MF}$ be the measured foliation carried by $\tau$. From the above discussion, $G$ is thought of as an interior point of $V(\tau)$. Hence, $H_n$ is carried by $\tau$ for sufficiently large $n$. Let $\mu_n\in V(\tau)$ be the transverse measure associated to $H_n$. The pair $(\tau,\mu_n)$ is regarded as a weighted graph on $\Sigma_{g,m}$. The branched covering $\tilde{\Sigma}_F\to \Sigma_{g,m}$ induces an orientation covering of $\tilde{\tau}\to \tau$ (cf. \cite[\S3.2]{MR1144770}) and the weighted graph $(\tau, \mu_n)$ defines a cycle $\tilde{\mu}_n$ on $\tilde{\Sigma}_F$ (via an orientation of $\tilde{\tau}$). After choosing an orientation of $\tilde{\tau}$ appropriately, the homology class $[\tilde{\mu}_n]$ in $H_1(\tilde{M}_{q_0};\mathbb{R})$ is thought of as the dual to $\mathcal{H}_F(x_n)$ in the sense that
\begin{equation}
\label{eq:intersection_with_C}
\int_C\mathcal{H}_F(x_n)=[\tilde{\mu}_n]\cdot C
\end{equation}
for any $C\in H_1(\tilde{M}_{q_0};\mathbb{R})$, where the dot in the right-hand side means the algebraic intersection number.

Let $\beta$ be a simple closed curve on $\Sigma_{g,m}$ which hits $\tau$ efficiently (cf. \cite[p.19]{MR1144770}). 
%
Since $\beta$ intersects $\tau$ only at branches of $\tau$,
$\beta$ is presented by the union of paths connecting singular points of $F$ which intersects $\tau$ only at branches between components containing the singular points (in the presentation, $\beta$ may not be simple). Let $\tilde{\beta}\subset \tilde{\Sigma}_{F}\cong \tilde{M}_{q_0}$ be the lift of $\beta$. We orient each branch of $\tilde{\beta}$ appropriately (as $\tilde{\alpha}_{F,x}$ in the proof of Proposition \ref{prop:locally_constant}) , the homology classs $[\tilde{\beta}]$ is in $H_1(\tilde{M}_{q_0},\tilde{Z}(q_0); \mathbb{R})^-\cong H_1(\tilde{M}_{q_0}; \mathbb{R})^-$ and satisfies $[\tilde{\mu}_n]\cdot [\tilde{\beta}]=2i(\beta,H_n)$. Since $\mathcal{H}_F(x_n)$ is contained in a compact set $K$, from \eqref{eq:intersection_with_C}, $i(\beta,H_n)$ is bounded from above.
Therefore,
$$
i(G,\beta)=\lim_{n\to \infty}i(t_n H_n,\beta)=0,
$$
which is a contradiction since $G$ is topologically equivalent to $F$.
\end{proof}

\subsection{Coordinates to $\mathbb{R}^{6g-6+2m}$}
Let $\{a_{i},b_{i}\}_{i=1}^{\convgenus+g}$
be a canonical homology basis of $H_{1}(\tilde{M}_{q_{0}};\mathbb{R})$
in the sense that $a_{j}\cdot a_{k}=b_{j}\cdot b_{k}=0$ and $a_{j}\cdot b_{k}=\delta_{jk}$
for $1\le j,k\le \convgenus+g$.
Suppose thet $\{a_{i},b_{i}\}_{i=1}^{\convgenus+g}$
satisfies
\begin{enumerate}
\item
$\{(\pi_{q_{0}})_{*}(a_{j}),(\pi_{q_{0}})_{*}(b_{j})\}_{j=1}^{g}$ is a canonical homology basis 
on $H_1(M_{q_{0}};\mathbb{Z})$; and
\item
$(i_{q_{0}})_{*}(a_{j})-a_{g+j}=
(i_{q_{0}})_{*}(b_{j})-b_{g+j}=0$, $(i_{q_{0}})_{*}(a_{2g+k})+a_{2g+k}=
(i_{q_{0}})_{*}(b_{2g+k})+b_{2g+k}=0$
for $1\le j\le g$ and $1\le k\le \convgenus-g$.
\end{enumerate}
We define a canonical homology basis $\{A_k,B_k\}_{k=1}^{\convgenus}$
of $H_1(\tilde{M}_{q_0};\mathbb{R})^-$
by
$$
A_{j}=\begin{cases}
{\displaystyle
\frac{1}{\sqrt{2}}(a_{j}-a_{g+j})} & (1\le j\le g), \\
a_{g+j} & (g+1\le j\le \convgenus),
\end{cases}
\quad
B_{j}=\begin{cases}
{\displaystyle
\frac{1}{\sqrt{2}}(b_{j}-b_{g+j})} & (1\le j\le g), \\
b_{g+j} & (g+1\le j\le \convgenus) 
\end{cases}
$$
for $j=1,\cdots,\convgenus$
(cf. Figure \ref{fig:branchedcovering}).
\begin{figure}
\includegraphics[width = 4.5cm]{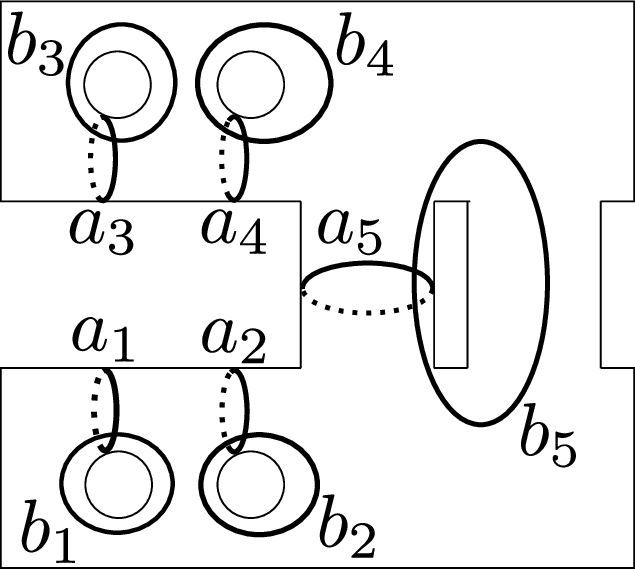}
\caption{A branched covering space and a canonical homology basis of the homology group
in the case of $g=2$ and $m=0$.}
\label{fig:branchedcovering}
\end{figure}
Consider mappings
$\Phi^{A}$, $\Phi^{B}\colon \teich_{g,m}\to \mathbb{R}^{\convgenus}$
and $\Phi\colon \teich_{g,m}\to \mathbb{R}^{2\convgenus}$
defined by
\begin{align*}
\Phi^{A}(x)
&=\left(
{\rm Im}\int_{A_{1}}\omega_{q_{F,x}},
\cdots,
{\rm Im}\int_{A_{\convgenus}}\omega_{q_{F,x}}
\right),
\\
\Phi^{B}(x)
&=\left(
{\rm Im}\int_{B_{1}}\omega_{q_{F,x}},
\cdots,
{\rm Im}\int_{B_{\convgenus}}\omega_{q_{F,x}}
\right),
\\
\Phi(x)&=(\Phi^{A}(x),\Phi^{B}(x)).
\end{align*}
The mapping $\Phi$ factors through $H_1(\tilde{M}_{q_0};\mathbb{R})^-$
with the embedding \eqref{eq:embedding-HF}.
We denote by ${\bf y}=({\bf y}^{A},{\bf y}^{B})=(y^{A}_{1},\cdots,y^{A}_{\convgenus},y^{B}_{1},\cdots,y^{B}_{\convgenus})$ the coordinates of
$\mathbb{R}^{2\convgenus}=\mathbb{R}^{\convgenus}\times \mathbb{R}^{\convgenus}$.


Let $H\in \mathcal{MF}_{L(F)}$.
From Proposition \ref{prop:locally_constant},
\begin{align*}
{\bf a}^{A}_{H}
&
=\left(
\int_{A_1}T_{H,x_0}
\cdots,
\int_{A_{\convgenus}}T_{H,x_0}
\right),
\\
{\bf a}^{B}_{H}
&
=
\left(
\int_{B_1}T_{H,x_0}
\cdots,
\int_{B_{\convgenus}}T_{H,x_0}
\right)
\end{align*}
depend only on $H$
and
$\mathcal{MF}_{L(F)}\ni H\mapsto ({\bf a}^{A}_{H},{\bf a}^{B}_{H})\in \mathbb{R}^{2\convgenus}$
is continuous
(see also \cite[Lemma 4.3]{MR523212}).
We define a convex cone
$$
\mathbb{H}_{F}=
\bigcap_{H\in \mathcal{MF}_{L(F)}}
\{{\bf y}=({\bf y}^{A},{\bf y}^{B})\in \mathbb{R}^{2\convgenus}=\mathbb{R}^{\convgenus}\times \mathbb{R}^{\convgenus}
\mid
{\bf a}^{A}_{H}\veccdot ({\bf y}^{B})^{T}-
{\bf a}^{B}_{H}\veccdot ({\bf y}^{A})^{T}>0\},
$$
where the superscript ``$T$" means the transpose of matrices (vectors).
From 
Theorem \ref{thm:embedding-H1} and \eqref{eq:Twedgegamma},
we have

\begin{proposition}[Coordinates]
\label{prop:chart}
The mapping
$$
\Phi\colon \teich_{g,m}\to \mathbb{R}^{2\convgenus}
$$
is a diffeomorphism onto the image.
The image coincides with $\mathbb{H}_{F}$.
\end{proposition}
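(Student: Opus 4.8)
The plan is to reduce both assertions to Theorem \ref{thm:embedding-H1} through the factorization of $\Phi$ recorded just before the statement. Extending Corollary \ref{coro:poincare-duality-horizontal-foliation} to an arbitrary $x$ by means of the trivialization \eqref{eq:canonical-identification-from-trivialization}, the $k$-th entries of $\Phi^A(x)$ and $\Phi^B(x)$ are the intersection numbers $\mathcal{H}_F(x)\cdot A_k$ and $\mathcal{H}_F(x)\cdot B_k$. Hence $\Phi=L\circ \mathcal{H}_F$, where $L\colon H_1(\tilde{M}_{q_0};\mathbb{R})^-\to \mathbb{R}^{2\convgenus}$ is the linear map $C\mapsto (C\cdot A_1,\dots,C\cdot A_\convgenus,C\cdot B_1,\dots,C\cdot B_\convgenus)$.

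First I would check that $L$ is a linear isomorphism. Writing $C=\sum_k(p_kA_k+q_kB_k)$ and using antisymmetry together with $A_j\cdot B_k=\delta_{jk}$, one gets $C\cdot A_j=-q_j$ and $C\cdot B_j=p_j$, so in these coordinates $L$ is the invertible map $(p,q)\mapsto(-q,p)$; equivalently, $L$ is nondegenerate because the intersection pairing on the $2\convgenus$-dimensional space $H_1(\tilde{M}_{q_0};\mathbb{R})^-$ is symplectic and $\{A_k,B_k\}$ is a symplectic basis. Since Theorem \ref{thm:embedding-H1} gives that $\mathcal{H}_F$ is a smooth embedding of $\teich_{g,m}$ onto the open cone $\mathcal{C}_F$, the composition $\Phi=L\circ \mathcal{H}_F$ is a diffeomorphism onto $L(\mathcal{C}_F)$, which proves the first assertion.

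It then remains to identify $L(\mathcal{C}_F)$ with $\mathbb{H}_F$. Setting $c_H:={\bf c}_{v,q_0}(\nu^v_H)$, I would note that by definition the entries of ${\bf a}^A_H$ and ${\bf a}^B_H$ are exactly the pairings $A_j\cdot c_H$ and $B_j\cdot c_H$. Expanding $\mathcal{H}_F(x)\cdot c_H$ in the symplectic basis and substituting these, a direct bilinear computation yields
$$
\mathcal{H}_F(x)\cdot c_H
= {\bf a}^A_H\veccdot(\Phi^B(x))^T-{\bf a}^B_H\veccdot(\Phi^A(x))^T .
$$
Therefore the defining inequality $C\cdot c_H>0$ of $\mathcal{C}_F$ transports under $L$ to precisely the inequality ${\bf a}^A_H\veccdot({\bf y}^B)^T-{\bf a}^B_H\veccdot({\bf y}^A)^T>0$ defining $\mathbb{H}_F$, for each $H\in \mathcal{MF}_{L(F)}$. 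Intersecting over all such $H$ gives $L(\mathcal{C}_F)=\mathbb{H}_F$, as required.

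The only point demanding care is the bookkeeping of signs in the two steps above, so that the half-space inequalities cutting out $\mathcal{C}_F$ and those cutting out $\mathbb{H}_F$ align with the same orientation rather than the reversed one; the correct matching relies on the antisymmetry of the pairing being tracked consistently through $L$. All of the substantive work---in particular that $\mathcal{H}_F$ is a proper smooth embedding with image the open cone $\mathcal{C}_F$---is already provided by Theorem \ref{thm:embedding-H1}, so I anticipate no essential difficulty beyond this linear-algebraic matching.
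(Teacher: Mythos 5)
Your proof is correct and follows essentially the same route as the paper: the paper likewise derives the proposition from Theorem \ref{thm:embedding-H1} after noting (via Corollary \ref{coro:poincare-duality-horizontal-foliation}) that $\Phi$ factors through the embedding $\mathcal{H}_F$. The paper leaves the linear-algebraic matching of $\mathcal{C}_F$ with $\mathbb{H}_F$ implicit, and your symplectic-basis computation correctly supplies exactly that step, with the signs working out as you verify.
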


\subsection{Differentials of the periods}
\label{subsec:description-complex-structure}
We first notice the following variational formula
obtained in \cite{MR3715450}:
For $v\in T_x\teich_{g,m}$,
let $\lambda\mapsto f(\lambda)\in \teich_{g,m}$ be the holomorphic disk
defined around $\lambda=0$ with $f(0)=x$ and $f_*(\partial/\partial\lambda\mid_{\lambda=0})=v$.
Then
\begin{equation}
\label{eq:differential-formula-period}
\begin{cases}
{\displaystyle
\left(
\frac{\partial}{\partial \lambda} 
\int_C\omega_{q_{F,f(\cdot)}}
\right)_{\lambda=0}}
&
{\displaystyle=
\int_C\overline{\frac{\pi_{q_{F,x_0}}^*(\eta_v)}{\omega_{q_{F,x_0}}}}
},
\\
{\displaystyle
\left(
\frac{\partial}{\partial \overline{\lambda}}
\int_C\omega_{q_{F,f(\cdot)}}
\right)_{\lambda=0}
}
&=
{\displaystyle
-\int_C\frac{\pi_{q_{F,x_0}}^*(\eta_v)}{\omega_{q_{F,x_0}}}
}
\end{cases}
\end{equation}
for $C\in H_1(\tilde{M}_{q_{F,x_0}};\mathbb{R})^-$
(cf. \cite[Lemma 4.1]{MR3715450}).
From Propositions \ref{prop:case-F} and \ref{prop:locally_constant},
$\lambda\mapsto
\int_C{\rm Re}(\omega_{q_{F,f(\cdot)}})$
is a constant function for each $c\in H_{1}(\tilde{M}_{q_{0}})$,
Hence, when $\lambda=s+it$,
we have
\begin{align}
\frac{\partial}{\partial s}
\left({\rm Im}
\int_C\omega_{q_{F,f(\cdot)}}
\right)_{\lambda=0}
\label{eq:period-computation1}
&=
-\sqrt{-1}\left(
\left.\frac{\partial \left(\chi_{q_{F,f(\lambda)}}(c)\right)}{\partial \lambda}\right|_{\lambda=0}
+
\left.\frac{\partial \left(\chi_{q_{F,f(\lambda)}}(c)\right)}{\partial \overline{\lambda}}\right|_{\lambda=0}
\right)
\\
&
=-\sqrt{-1}\left(
\overline{\int_{c}\frac{\pi_{q_{F,x_0}}^{*}\left(\eta_{v}\right)}{\omega_{q_{F,x}}}}
-
\int_{c}\frac{\pi_{q_{F,x_0}}^{*}\left(\eta_{v}\right)}{\omega_{q_{F,x}}}
\right)
\nonumber
\\
&=-2{\rm Im}\int_{c}\frac{\pi_{q_{F,x_0}}^{*}\left(\eta_{v}\right)}{\omega_{q_{F,x}}}.
\nonumber
\end{align}

Let ${\bf y}\in \mathbb{H}_{F}$ and $x=(M,f)\in \teich_{g,m}$
with ${\bf y}=\Phi(x)$.
For $j=1,\cdots,\convgenus$.
let $\varphi^{j}_{F,x}$ be the Abelian differential on $\tilde{M}_{q_{F,x}}$
normalized by
$\displaystyle\int_{A_{k}}\varphi^{j}_{F,x}=\delta_{jk}$
for $k=1,\cdots,\convgenus$.
Set
$$
\pi_{jk}=\pi_{jk}({\bf y})=\int_{B_{k}}\varphi^{j}_{F,x}
\quad (j,k=1,\cdots,\convgenus)
$$
and $\Pi=\Pi({\bf y})=(\boldsymbol{\pi}_{1},\cdots,\boldsymbol{\pi}_{\convgenus})=(\pi_{jk}({\bf y}))$.
From the definition,
\begin{equation}
\label{eq:canonicaldifferential}
\omega_{q_{F,x}}=({\bf a}^{A}_{F}+i{\bf y}^{A})\veccdot \boldsymbol{\varphi}_{F,x}{}^T,
\end{equation}
where ${\bf y}^A=\Phi_A(x)$, ${\bf y}^B=\Phi_B(x)$ and $\boldsymbol{\varphi}_{F,x}=(\varphi^{1}_{F,x},\cdots, \varphi^{\convgenus}_{F,x})$.
Comparing the $B$-periods of both sides of \eqref{eq:canonicaldifferential},
we have the following relation:
\begin{equation}
\label{eq:periodrelation0}
{\bf a}^{B}_{F}+i{\bf y}^{B}=({\bf a}^{A}_{F}+i{\bf y}^{A})\veccdot \Pi.
\end{equation}

\subsection{The complex structure on $\teich_{g,m}$ under the coordinates}
\label{subsec:complex_structure}
For $j=1,\cdots,\convgenus$,
we define a tangent vector $v_{j}=v_{j}(x)\in T_x\teich_{g,m}$ by
\begin{align}
\varphi^{j}_{F,x}&=2\sqrt{-1}\frac{\pi_{q_{F,x_0}}^{*}\left(\eta_{v_{j}(x)}\right)}{\omega_{q_{F,x}}}=-2\frac{\pi_{q_{F,x_0}}^{*}\left(\eta_{v_{\convgenus+j}(x)}\right)}{\omega_{q_{F,x}}}
\label{eq:period-computation}
\end{align}
 where $\eta_{v_{j}(x)}$ is the $q_{F,x}$-realization for the tangent vector $v_{j}(x)$ (cf. \eqref{eq:q0-realization}). 
Since \eqref{eq:qo-realization-1} is an anti-complex isomorphism, $\sqrt{-1}v_{j}=v_{\convgenus+j}$ and $\sqrt{-1}v_{\convgenus+j}=-v_{j}$ for all $j=1,\cdots,\convgenus$. Set $\{{\bf v}^{A},{\bf v}^{B}\}$
$=
\{(v_{k})_{k=1}^{\convgenus},(v_{k})_{k=\convgenus+1}^{2\convgenus}\}$.
From \eqref{eq:period-computation1},
we have
\begin{equation*}
\Phi_{*}(\{{\bf v}^{A},{\bf v}^{B}\})=(\partial^{A},\partial^{B})
\begin{pmatrix}
I_{\convgenus} & 0 \\
{\rm Re}(\Pi) & {\rm Im}(\Pi)
\end{pmatrix}
\end{equation*}
where
$
\partial^{A}
=(\partial/\partial y^{A}_{1},\cdots,\partial/\partial y^{A}_{\convgenus})$
and $\partial^{B}
=(\partial/\partial y^{B}_{1},\cdots,\partial/\partial y^{B}_{\convgenus})
$.
Since the matrix in the right-hand side is non-singular, the differential $\Phi_*$ of $\Phi$ is non-singular. Therefore, so is the differential of $\mathcal{H}_F$.

Set $\{{\bf V}^{A},{\bf V}^{B}\}=\Phi_{*}(\{{\bf v}^{A},{\bf v}^{B}\})$.
We define an almost complex structure $\mathcal{J}$ on $\mathbb{H}_{F}$
by
\begin{align}
\mathcal{J}\left(V_{j}\right)=V_{\convgenus+j},
&\quad
\mathcal{J}\left(V_{\convgenus+j}\right)=-V_{j}
\quad (j=1,\cdots,\convgenus)
\label{eq:ComplexStructure}
\end{align}
(cf. \cite[\S2, Chapter IV]{MR1393941}).
From \eqref{eq:ComplexStructure},
we see that $\mathcal{J}\circ \Phi_*=\Phi_*\circ (\sqrt{-1}\times)$,
where $\sqrt{-1}\times$ is the standard complex structure
on $\teich_{g,m}$ defined by multiplying
by $\sqrt{-1}$.
Therefore, $\Phi$ is holomorphic.
Thus, from Theorem \ref{thm:embedding-H1}, we obtain
\begin{proposition}[Holomorphic chart]
\label{prop:holomorphic-chart}
$\Phi\colon (\teich_{g,m},\sqrt{-1}\times)\to (\mathbb{H}_{F},\mathcal{J})$
is biholomorphic.
\end{proposition}

\section{Complex analysis}
\label{subsec:result-from-pluripotential-theory}
\subsection{PSH exhaustions and Boundary measures}
\label{subsubsec:PSH exhaustions}
Let $\Omega$  be a domain in $\mathbb{C}^n$.
A function $u$ on $\Omega$ is said to be \emph{plurisubharmonic} (PSH)
if for each $a\in \Omega$ and $b\in \mathbb{C}^n$,
the function $\lambda\mapsto u(a+\lambda b)$ is subharmonic
or identically $-\infty$ on every component of the set $\{\lambda\in \mathbb{C}\mid a+\lambda b\in \Omega\}$.
A function $u$ on $\Omega$ is, by definition,
\emph{pluriharmonic} if $u\in C^2(\Omega)$ and the restriction
to any complex line that meets $\Omega$ is harmonic.
The real part of a holomorphic function on $\Omega$ is pluriharmonic
(e.g. \cite{MR1150978}).
A function $u\colon \Omega\to [-\infty,0)$ is said to be an \emph{exhaustion} on $\Omega$
if $u^{-1}([-\infty,r))$ is relatively compact in $\Omega$ for $r<0$.
A domain $\Omega\subset \mathbb{C}^n$ is said to be
\emph{hyperconvex}
if there is a continuous PSH exhaustion
$u\colon \Omega\to [-\infty,0)$
(cf. \cite[D\'efinition 2.1]{MR881709}).

Let $\Omega$ be a bounded hyperconvex domain in $\mathbb{C}^n$.
Let $u\colon \Omega\to [-\infty,0)$ be a continuous PSH-exhaustion
on $\Omega$ and 
set $S_u(r)=\{z\in \Omega\mid u(z)=r\}$
and $B_u(r)=\{z\in \Omega\mid u(z)<r\}$.
For $r<0$,
there is a Borel measure $\mu_{u,r}$ on $\mathbb{C}^n$
supported on $S_u(r)$ 
which satisfies the Lelong-Jensen formula:
\begin{equation}
\label{eq:Demailly-LJ-formula}
\int_{S_u(r)}Vd\mu_{u,r}=\int_{B_u(r)}V(dd^cu)^n+\int_{B_u(r)}(r-u)dd^cV\wedge (dd^cu)^{n-1}
\end{equation}
for any PSH function $V$ on $\Omega$,
where $d=\partial+\overline{\partial}$,
$d^c=\sqrt{-1}(\overline{\partial}-\partial)$
and
$dd^c=2\sqrt{-1}\partial\overline{\partial}$
(cf. \cite[D\'efinition 0.1]{MR881709}).

When $\int_{\Omega}(dd^cu)^n<\infty$,
there is a Borel measure $\mu_u$ on $\mathbb{C}^n$ which is supported on $\partial \Omega$
such that $\mu_{u,r}$ converges to $\mu_u$ weakly on $\mathbb{C}^n$
and $\mu_{u}(\partial \Omega)=\int_{\Omega}(dd^cu)^n$.
The measure $\mu_u$ is called the \emph{boundary measure associated to $u$}
(cf. \cite[Th\'eor\`eme et D\'efinition 3.1]{MR881709}).
We will use the following results due to Demailly later.

\begin{proposition}[Th\'eor\`eme 3.4 in \cite{MR881709}]
\label{prop:thm-BT}
Let $\Omega$ be a bounded hyperconvex domain in $\mathbb{C}^N$. Let $u, v\colon \Omega\to (-\infty, 0]$ be PSH exhaustion. Suppose that $u\le v$ and $\int_\Omega (dd^cu)^N<\infty$. Then
$$
\int_{\Omega}(dd^cv)^N\le \int_{\Omega}(dd^cu)^N.
$$
and $\mu_v\le \mu_u$ on $\partial \Omega$.
\end{proposition}

\begin{proposition}[Th\'eor\`eme 3.8 in \cite{MR881709}]
\label{prop:Demally1}
Let $\Omega$ be a bounded hyperconvex domain in $\mathbb{C}^n$.
Let $u,v\colon \Omega\to [-\infty,0)$ be PSH-continuous exhaustions 
with
$$
\int_\Omega (dd^xu)^n,\ \int_\Omega(dd^c v)^n<\infty.
$$
Suppose that there is an relatively open $N_0\subset \partial \Omega$ and a function $\lambda\ge 0$
on $N_0$ such that for all $z\in N_0$,
$$
\limsup_{w\to z}\frac{u(z)}{v(z)}=\lambda(z)<\infty.
$$
Then $d\mu_u\le \lambda^nd\mu_v$ on $N_0$.
If the lim-sup is the limit,
$d\mu_u=\lambda^nd\mu_v$ on $N_0$.
\end{proposition}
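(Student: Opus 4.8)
The plan is to reduce the comparison to two elementary properties of the boundary measure that are visible from the Lelong--Jensen formula \eqref{eq:Demailly-LJ-formula}, namely its homogeneity under scaling and a local monotonicity principle, and then to localize the conclusion on $N_0$. First I would record the scaling. For a constant $c>0$ the function $cv$ is again a continuous PSH exhaustion, with $B_{cv}(r)=B_v(r/c)$, $S_{cv}(r)=S_v(r/c)$ and $(dd^cv\,\text{scaled})$ satisfying $(dd^c(cv))^n=c^n(dd^cv)^n$. Substituting these into \eqref{eq:Demailly-LJ-formula} written for $cv$ and comparing with the formula for $v$ gives $\mu_{cv,r}=c^n\mu_{v,r/c}$; letting $r\uparrow 0$ and using the weak convergence $\mu_{v,s}\to\mu_v$ yields
\[
\mu_{cv}=c^n\mu_v .
\]
The degenerate value $c=0$ (equivalently $\lambda\equiv 0$ on part of $N_0$) will be recovered at the end by letting $c\downarrow 0$.

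Next I would reduce the functional statement to a constant one. Since $g=u/v$ is continuous and positive on $\Omega$ and $\lambda(z)=\limsup_{\Omega\ni w\to z}g(w)$, a short cluster-set argument shows that $\lambda$ is upper semicontinuous on $\partial\Omega$; hence $\{z\in N_0\mid \lambda(z)<c\}$ is relatively open for every $c$. It therefore suffices to prove the following constant case: \emph{if $N'\subset N_0$ is relatively open and $\lambda<c$ on $N'$ for a constant $c>0$, then $\mu_u\le c^n\mu_v$ on $N'$}. The inequality $d\mu_u\le\lambda^n d\mu_v$ then follows by covering the support of a nonnegative test function by such pieces $N'$, using a subordinate partition of unity, and letting $c\downarrow\lambda$. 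For the constant case, the hypothesis $\limsup_{w\to z}g(w)<c$ at each $z\in N'$ produces a one-sided neighborhood $W$ of $N'$ in $\Omega$ on which $u/v<c$; since $v<0$ this is exactly $u\ge cv$ on $W$. Writing $w:=cv$, so that $\mu_w=c^n\mu_v$ by the previous step, we are reduced to the monotonicity assertion $\mu_u\le\mu_w$ on $N'$ under the hypothesis $w\le u$ near $N'$.

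The core of the argument, and the step I expect to be the main obstacle, is this local monotonicity of boundary measures. I would first establish that $\mu_u|_{N'}$ is \emph{local}, depending only on the germ of $u$ along $N'$: testing against $f\in C(\partial\Omega)$ with $f\ge 0$ supported near $N'$, the identity $\int f\,d\mu_u=\lim_{r\uparrow 0}\int_{S_u(r)}f\,d\mu_{u,r}$, together with the fact that for small $|r|$ the level set $S_u(r)$ meets $\operatorname{supp}f$ only in a thin collar of $N'$, shows that only the behavior of $u$ near $N'$ contributes. Granting locality, I would compare $u$ and $w$ by replacing them away from $N'$ with globally comparable exhaustions that agree with $u$ and with $w$ in the collar and coincide with each other off $N'$, and then apply the Bedford--Taylor comparison theorem, which says that the smaller of two exhaustions with the same boundary values carries the larger total Monge--Amp\`ere mass. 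Since $w\le u$ in the collar, this yields $\mu_u\le\mu_w=c^n\mu_v$ on $N'$. The delicate point is precisely this gluing: producing globally defined ordered PSH exhaustions while preserving the prescribed germs along $N'$ and without introducing spurious boundary mass on $\partial\Omega\setminus N'$. This is where Demailly's balayage (sweeping-out) technique must enter \cite{MR881709}, and it is the part that cannot be reduced to \eqref{eq:Demailly-LJ-formula} alone.

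Finally, for the equality statement I would run the same scheme with the reverse inequalities. When the lim-sup is a genuine limit, $\lambda=\limsup g=\liminf g$ is both upper and lower semicontinuous, hence continuous on $N_0$; for $c<\lambda(z)$ the relation $\lim_{w\to z}g(w)>c$ gives $u\le cv$ near $z$, so the monotonicity principle applied with $u\le w=cv$ yields $\mu_u\ge c^n\mu_v$ locally. Letting $c\uparrow\lambda$ and combining with the inequality already proved gives $d\mu_u=\lambda^n d\mu_v$ on $N_0$.
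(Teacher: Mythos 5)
First, note what you are being compared against: the paper offers no proof of this proposition at all --- it is imported verbatim as Th\'eor\`eme 3.8 of Demailly \cite{MR881709} and used as a black box. So the only question is whether your sketch stands on its own, and it does not: there is a genuine gap, and you concede it yourself. Your reductions are correct and genuinely useful --- the scaling identity $\mu_{cv}=c^n\mu_v$ does follow from \eqref{eq:Demailly-LJ-formula}, the upper semicontinuity of $\lambda$ and the passage to the constant case $\lambda<c$ on a relatively open piece $N'\subset N_0$ are routine, and the equality case does split into the two one-sided inequalities. But after these reductions, the statement you still must prove --- if $u\ge cv$ on a one-sided neighborhood of $N'$, then $\mu_u\le c^n\mu_v$ on $N'$ --- is not a lemma on the way to the theorem; it \emph{is} the theorem, in the case of constant $\lambda$. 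At precisely this point you write that the gluing ``cannot be reduced to \eqref{eq:Demailly-LJ-formula} alone'' and that ``Demailly's balayage technique must enter.'' Deferring the load-bearing step to the very result being proved means your text is a reduction, not a proof.

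Moreover, the route you indicate for that core step faces two concrete obstacles. The Bedford--Taylor comparison theorem (Proposition \ref{prop:thm-BT}) compares \emph{interior} Monge--Amp\`ere masses on the open set $\{u<v\}$, whereas the conclusion concerns the \emph{boundary} measures $\mu_u,\mu_v$, i.e.\ where the mass of the level-set measures $\mu_{u,r}$ accumulates as $r\uparrow 0$; there is no direct implication from the first kind of statement to the second. Second, the ``locality'' assertion --- that $\mu_u|_{N'}$ depends only on the germ of $u$ along $N'$ --- is itself exactly the kind of claim that requires proof: $S_u(r)$ is a global hypersurface, and the thin-collar argument does not exclude mass of $\mu_{u,r}$ drifting into or out of the collar above $N'$ as $r\uparrow 0$. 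Controlling that non-leaking of mass is the analytic content of Demailly's convergence/balayage analysis, so the skeleton you propose is sensible but its central joint is missing.
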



\subsection{Pluricomplex Green function and Pluriharmonic measures}
Demailly observed that
for any bounded hyperconvex domain $\Omega$ in $\mathbb{C}^n$
and $w\in \Omega$,
there is a unique PSH function $g_{\Omega,w}\colon \Omega\to [-\infty,0)$
such that
\begin{enumerate}
\item
$(dd^cg_{\Omega,w})^n=(2\pi)^n\delta_w$,
where $\delta_w$ is the Dirac measure with support at $w$;
and
\item
$g_{\Omega,w}(z)=\sup_v\{v(z)\}$,
where the supremum runs over all non-positive PSH function $v$
on $\Omega$ with $v(z)\le \log\|z-w\|+O(1)$ around $z=w$
\end{enumerate}
(cf. \cite[Th\'eor\`eme 4.3]{MR881709}).
The function $g_\Omega(w,z)=g_{\Omega,w}(z)$ is called 
the \emph{pluricomplex Green function} on $\Omega$.
The pluricomplex Green function $g_{\Omega,w}$ is a continuous exhaustion on $\Omega$
for fixed $w\in \Omega$.
For $w\in \Omega$,
the boundary measure associated with $u_{\Omega,w}=(2\pi)^{-n}g_{\Omega,w}$ is called the \emph{pluriharmonic measure of point $w$}
(cf. \cite[(5.2) D\'efinition]{MR881709}). It is known that the pluriharmonic measure of two distinct points are mutually absolutely continuous (cf. \cite[(5.3) Th\'eor\`eme]{MR881709}).
The pluriharmonic measure of point $w$ provides the following integral formula:
\begin{equation}
\label{eq:integral-formula}
\int_{\partial \Omega}V(\zeta)d\mu_{u_{\Omega,w}}
=
V(w)+\int_{\Omega}dd^cV\wedge |u_{\Omega,w}|(dd^cu_{\Omega,w})^{n-1}
\end{equation}
for any PSH function $V$ which is continuous on $\overline{\Omega}$.

\section{The Monge-Amp\`ere measure of extremal lengths}
\label{sec:measures-on-horosphere-monge-ampere-measure}

\subsection{Extremal length functions on the chart and Frames}
We discuss with the chart $\Phi$ on $\teich_{g,m}$ in Propositions \ref{prop:chart} and \ref{prop:holomorphic-chart}
for an essentially complete measured foliation $F$ as complex coordinates.
For the simplicity
we set
\begin{equation*}
\extsymp_G({\bf y})=\ext_{\Phi^{-1}({\bf y})}(G)
\end{equation*}
for ${\bf y}\in \mathbb{H}_F$ and $G\in \mathcal{MF}$.
From \eqref{eq:periodrelation0} and Riemann's bilinear relation,
we have
\begin{align}
\extsymp_F({\bf y})
&=
\frac{1}{2}
\left(
{\bf a}^{A}_{F}\veccdot ({\bf y}^{B})^{T}-
{\bf a}^{B}_{F}\veccdot ({\bf y}^{A})^{T}
\right)
=\frac{1}{2}
\left(
{\bf a}^{A}_{F}\veccdot {\rm Im}(\Pi)\veccdot ({\bf a}^{A}_{F})^{T}
+{\bf y}^{A}\veccdot {\rm Im}(\Pi)\veccdot ({\bf y}^{A})^{T}
\right)
\label{eq:extremallengthfunction1-1}
\end{align}
for ${\bf y}\in \mathbb{H}_F$.

Under the chart $\Phi$,
we define smooth vector fields
${\bf Z}=(Z_{j})_{1\le j\le \convgenus}$
and $\overline{{\bf Z}}=(\overline{Z}_{j})_{1\le j\le \convgenus}$,
and $1$-forms
$\boldsymbol{\Omega}=(\Omega^{k})_{1\le k\le \convgenus}$
and
$\overline{\boldsymbol{\Omega}}=(\overline{\Omega}^{k})_{1\le k\le \convgenus}$ by
\begin{align}
({\bf Z},\overline{{\bf Z}})
&=\left(\frac{1}{2}({\bf V}^A-\sqrt{-1}{\bf V}^B),\frac{1}{2}({\bf V}^A+\sqrt{-1}{\bf V}^B)\right)=\frac{1}{2}(\partial^{A},\partial^{B})\veccdot
\begin{bmatrix}
I_{\convgenus} & I_{\convgenus} \\
\overline{\Pi} & \Pi
\end{bmatrix},
\nonumber
\\
(\boldsymbol{\Omega},\overline{\boldsymbol{\Omega}})
&=-\sqrt{-1}(d{\bf y}^{A},d{\bf y}^{B})\veccdot
\begin{bmatrix}
\Pi & -\overline{\Pi} \\
-I_{\convgenus} & I_{\convgenus}
\end{bmatrix}
\begin{bmatrix}
{\rm Im}(\Pi) & 0 \\
0 & {\rm Im}(\Pi)
\end{bmatrix}
^{-1}.
\label{eq:Omega-def}
\end{align}
From the observations in \S\ref{subsec:description-complex-structure} and \S\ref{subsec:complex_structure},
$Z_k$ and $\overline{Z}_k$ are $(1,0)$ and $(0,1)$-vector fields,
and $\Omega_k$ and $\overline{\Omega}_k$ are
$(1,0)$ and $(0,1)$-forms on $\mathbb{H}_F$
such that $\Omega_k(Z_l)=\overline{\Omega}_k(\overline{Z}_l)=\delta_{kl}$
and $\Omega_k(\overline{Z}_l)=\overline{\Omega}_k(Z_l)=0$
for $k,l=1,\cdots,\convgenus$.
The systems $\{Z_k\}_{k=1}^{\convgenus}$
and  $\{\Omega_k\}_{k=1}^{\convgenus}$ 
are a smooth frame on the holomorphic tangent bundle
and a smooth coframe of the holomorphic cotangent bundle
on $\mathbb{H}_{F}$ (and hence on $\teich_{g,m}$).
\begin{proposition}[\cite{Towards-complex-miyachi2017}]
\label{prop:derivatives-extremallengthfunction}
The differentials and the Levi-form
of the extremal length function of $F$ satisfies the following:
\begin{align}
\left(\partial \extsymp_F\right)_{{\bf y}}
&=
-\frac{\sqrt{-1}}{4}\sum_{k=1}^{\convgenus}
({\bf a}_{F}^{A}+\sqrt{-1}{\bf y}^{A})\veccdot{\rm Im}(\boldsymbol{\pi}_{k})\,
\Omega^{k},
\label{eq:thm:extremallengthfunction1}
\\
\left(\overline{\partial}\extsymp_F\right)_{{\bf y}}
&=
\frac{\sqrt{-1}}{4}\sum_{k=1}^{\convgenus}
({\bf a}_{F}^{A}-\sqrt{-1}{\bf y}^{A})\veccdot{\rm Im}(\boldsymbol{\pi}_{k})\,
\overline{\Omega}^{k},
\label{eq:thm:extremallengthfunction2}
\\
\partial \overline{\partial}\extsymp_F
&=\frac{1}{4}
\sum_{k,l=1}^{\convgenus}
{\rm Im}(\pi_{kl})\,\Omega^{k}\wedge \overline{\Omega}^{l}
\label{eq:thm:extremallengthfunction3}
\end{align}
for ${\bf y}\in \mathbb{H}_{F}$.
\end{proposition}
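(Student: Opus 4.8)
The goal is to compute the differentials and Levi form of the extremal-length function $\extsymp_F$, whose closed-form expression is already available from \eqref{eq:extremallengthfunction1-1}, namely
$$
\extsymp_F({\bf y})=\frac{1}{2}\left({\bf a}^{A}_{F}\veccdot {\rm Im}(\Pi)\veccdot ({\bf a}^{A}_{F})^{T}+{\bf y}^{A}\veccdot {\rm Im}(\Pi)\veccdot ({\bf y}^{A})^{T}\right).
$$
The plan is therefore purely computational: differentiate this scalar function in the real coordinates ${\bf y}=({\bf y}^{A},{\bf y}^{B})$ and then re-express everything in the holomorphic frame $\{Z_k\}$ and coframe $\{\Omega^k\}$ introduced via \eqref{eq:Omega-def}. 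The single genuinely nontrivial analytic input is the dependence of the period matrix $\Pi=\Pi({\bf y})$ on the coordinates. First I would record this dependence: since $\Pi$ is the period matrix of the family of Riemann surfaces parametrized holomorphically by $\Phi$ (Proposition \ref{prop:holomorphic-chart}), $\Pi$ is a holomorphic function of the complex coordinates $z_k$ dual to $Z_k$, and Rauch's variational formula expresses $\partial \pi_{jk}$ in terms of the normalized Abelian differentials $\varphi^j_{F,x}$. This holomorphicity is what makes the off-diagonal $\partial\partial$ and $\overline{\partial}\,\overline{\partial}$ contributions collapse.

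\textbf{Key steps in order.}
First I would apply $Z_k$ and $\overline{Z}_k$ to $\extsymp_F$ using the vector-field presentation in \eqref{eq:Omega-def}: because $Z_k$ is a $(1,0)$ field and $\extsymp_F$ depends on ${\bf y}^A$ and on $\Pi$, I compute $Z_k\,\extsymp_F$ splitting into the term where the derivative hits the explicit quadratic ${\bf y}^A$-dependence and the term where it hits $\Pi$ through ${\rm Im}(\Pi)$. The crucial simplification is that, when $\Pi$ is holomorphic, $\partial\,{\rm Im}(\Pi)=\tfrac{1}{2i}\partial\Pi$ and the terms quadratic in ${\bf a}^A_F,{\bf y}^A$ combine, using the reality of $\extsymp_F$ and relation \eqref{eq:periodrelation0}, into the coefficient $({\bf a}^A_F+\sqrt{-1}{\bf y}^A)\veccdot {\rm Im}(\boldsymbol{\pi}_k)$ displayed in \eqref{eq:thm:extremallengthfunction1}. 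Dualizing back through $\Omega^k(Z_l)=\delta_{kl}$, $\Omega^k(\overline{Z}_l)=0$ converts $Z_k\extsymp_F$ into the $\partial$-form; the $\overline{\partial}$ statement \eqref{eq:thm:extremallengthfunction2} follows identically (or by complex conjugation, since $\extsymp_F$ is real). For the Levi form \eqref{eq:thm:extremallengthfunction3}, I would apply $\overline{Z}_l$ to the coefficient already obtained for $\partial\extsymp_F$: differentiating $({\bf a}^A_F+\sqrt{-1}{\bf y}^A)\veccdot{\rm Im}(\boldsymbol{\pi}_k)$ in the $\overline{Z}_l$ direction, the holomorphicity of $\Pi$ kills $\overline{Z}_l\,\boldsymbol{\pi}_k$, leaving only the derivative of ${\bf y}^A$ against ${\rm Im}(\boldsymbol{\pi}_k)$, which produces precisely $\tfrac{1}{4}{\rm Im}(\pi_{kl})$ after the frame identifications.

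\textbf{The main obstacle.}
The hard part will be bookkeeping the change between the real frame $(\partial^A,\partial^B)$ and the holomorphic frame $\{Z_k,\overline{Z}_k\}$, together with the matrix ${\rm Im}(\Pi)$ appearing both inside $\extsymp_F$ and inside the definition of $\Omega^k$ in \eqref{eq:Omega-def}; these ${\rm Im}(\Pi)$-factors must cancel cleanly for the final coefficients to take the stated symmetric form. I expect that the correct way to control this is to first verify that $Z_k\,{\bf y}^A$ and $Z_k\,\Pi$ have simple expressions (the former constant, the latter given by Rauch's formula as $\varphi^j\varphi^k$-type integrals), and then to check that the symmetry $\pi_{kl}=\pi_{lk}$ of the period matrix is exactly what is needed to make the Levi-form matrix $({\rm Im}(\pi_{kl}))$ symmetric, hence a genuine Hermitian form. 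Positivity of ${\rm Im}(\Pi)$ (Riemann's bilinear relations) is not needed for the identities themselves but confirms a posteriori that $\extsymp_F$ is strictly plurisubharmonic, consistent with \eqref{eq:minsky-inequality} and the role of $\extsymp_F$ as an exhaustion later in the paper.
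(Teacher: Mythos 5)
Your strategy---differentiate the closed form \eqref{eq:extremallengthfunction1-1} in the frame $\{Z_k,\overline{Z}_k\}$---is genuinely different from the paper's proof, which never touches derivatives of $\Pi$ at all: there, \eqref{eq:thm:extremallengthfunction1} and \eqref{eq:thm:extremallengthfunction2} come from Gardiner's variational formula \cite{MR736212} applied to the distinguished tangent vectors $v_j$ (whose $q_{F,x}$-realizations are multiples of the normalized differentials $\varphi^j_{F,x}$ by \eqref{eq:period-computation}) together with the Riemann bilinear relations, and \eqref{eq:thm:extremallengthfunction3} is read off from the Levi-form formula $\mathcal{L}(\extsymp_F)[v,\overline{v}]=2\int_{M_0}|\eta_v|^2/|q_{F,x_0}|$ of \cite[Theorem 5.1]{MR3715450}. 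Your route, by contrast, hinges on the assertion that $\Pi$ is holomorphic on $(\mathbb{H}_F,\mathcal{J})$, and the justification you offer is circular: Proposition \ref{prop:holomorphic-chart} says that the \emph{chart} $\Phi$ is biholomorphic, not that $x\mapsto \tilde{M}_{q_{F,x}}$ is a holomorphic family of Riemann surfaces. The latter is exactly what is in doubt, because the Hubbard--Masur section $x\mapsto q_{F,x}$ is \emph{not} holomorphic---its derivative \eqref{eq:qo-realization-1} is anti-complex linear---so neither the complex structure nor the branch data of the double cover visibly varies holomorphically, and Rauch's formula cannot be invoked before the Kodaira--Spencer class of this family is identified. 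Identifying it requires precisely the variational input \eqref{eq:differential-formula-period} (i.e.\ \cite[Lemma 4.1]{MR3715450}) that the paper channels through Gardiner's formula; it is not available for free.

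Even granting holomorphicity of $\Pi$, your Levi-form step fails as described. From \eqref{eq:Omega-def} one has $\overline{Z}_l({\bf y}^A)=\tfrac{1}{2}e_l$ (with $e_l$ the $l$-th standard basis row vector), so the term you keep yields $-\tfrac{\sqrt{-1}}{4}\cdot\sqrt{-1}\cdot\tfrac{1}{2}\,{\rm Im}(\pi_{kl})=\tfrac{1}{8}{\rm Im}(\pi_{kl})$---only half of \eqref{eq:thm:extremallengthfunction3}. The term you discard does not vanish: ${\rm Im}(\boldsymbol{\pi}_k)$ contains $\overline{\boldsymbol{\pi}_k}$, and holomorphicity kills $\overline{Z}_l\boldsymbol{\pi}_k$ but not $\overline{Z}_l\,\overline{\boldsymbol{\pi}_k}=\overline{Z_l\boldsymbol{\pi}_k}$, which is nonzero since differentiating \eqref{eq:periodrelation0} along $Z_l$ gives $({\bf a}^A_F+\sqrt{-1}{\bf y}^A)\,Z_l\Pi={\rm Im}(\boldsymbol{\pi}_l)\neq 0$. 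Finally, $\{Z_k\}$ is not a coordinate frame ($[Z_k,\overline{Z}_l]\neq 0$, $d\overline{\Omega}^l\neq 0$), so the scalar $\overline{Z}_lZ_k\extsymp_F$ is \emph{not} the coefficient of $\Omega^k\wedge\overline{\Omega}^l$ in $\partial\overline{\partial}\extsymp_F$; the bracket correction $(\partial\extsymp_F)([Z_k,\overline{Z}_l])$ is of the same order as the discrepancy. These defects can be repaired: one can check that, using only the relations obtained by differentiating \eqref{eq:periodrelation0} along $Z_k$ and $\overline{Z}_l$, the symmetry of $\Pi$, and a full accounting of the brackets, the unknown quantities ${\bf y}^A\, Z_k\Pi$ cancel between the naive second derivative and the bracket term, and \eqref{eq:thm:extremallengthfunction3} follows with no holomorphicity hypothesis whatsoever (likewise \eqref{eq:thm:extremallengthfunction1} already follows from the linear expression in \eqref{eq:extremallengthfunction1-1} plus the differentiated period relation). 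But that is a different computation from the one you propose, and it is this cancellation---not holomorphicity of $\Pi$---that produces the coefficient $\tfrac14{\rm Im}(\pi_{kl})$; as written, your argument outputs $\tfrac18{\rm Im}(\pi_{kl})$ plus uncontrolled terms.
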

The formulas in in Proposition \ref{prop:derivatives-extremallengthfunction} are calculated in \cite{Towards-complex-miyachi2017}.
For the completeness, we shall check the formulas.
We take the tangent vectors $\{v_j\}_{j=1,\cdots,\convgenus}$ as \S\ref{subsec:complex_structure}. 
Let $\mu_j$ be a Beltrami differential on $x=(M,f)\in \teich_{g,m}$ satisfying $v_j(x)=[\mu_j]\in T_x\teich_{g,m}$ for $j=1,\cdots,\convgenus$.
Then,
\begin{align*}
-\int_{M}\mu_j q_{F,x}
&=-\int_{M}\frac{\overline{\eta_{v_j}}}{|q_{F,x}|}q_{F,x}
=-\frac{\sqrt{-1}}{4}\int_{\tilde{M}_{q_{F,x}}}
\omega_{q_{F,x_0}}\wedge \overline{\frac{\pi_{q_{F,x}}^*(\eta_{v_j})}{\omega_{q_{F,x}}}}
\\
&=\frac{1}{8}\int_{\tilde{M}_{q_{F,x}}}
\omega_{q_{F,x}}\wedge \overline{\varphi^j_{F,x}}
\\
&=\frac{1}{8}\sum_{k=1}^{\tilde{g}}
\left(
\int_{A_k}\omega_{q_{F,x}}\overline{\int_{B_k}\varphi^j_{F,x}}
-\int_{B_k}\omega_{q_{F,x}}\overline{\int_{A_k}\varphi^j_{F,x}}
\right) \\
&=\frac{1}{8}\left(
({\bf a}_F^A+\sqrt{-1}{\bf y}^A)\cdot \overline{\boldsymbol{\pi}_{j}}
-
({\bf a}_F^A+\sqrt{-1}{\bf y}^A)\cdot \boldsymbol{\pi}_{j}
\right)
\\
&=-\frac{\sqrt{-1}}{4}({\bf a}_F^A+\sqrt{-1}{\bf y}^A)\cdot {\rm Im}(\boldsymbol{\pi}_{j}).
\end{align*}
Since $\Phi_*(v_j)=Z_j$, from Gardiner's formula (\cite{MR736212}), we have \eqref{eq:thm:extremallengthfunction1} and \eqref{eq:thm:extremallengthfunction2}.

Let $v=\sum_{k=1}^{\convgenus}a_iv_j$. From Theorem 5.1 in \cite{MR3715450}, the Levi form $\mathcal{L}(\extsymp_F)[v,\overline{v}]$ at $x=(M,f)\in \teich_{g,m}$ satisfies
\begin{align*}
\mathcal{L}(\extsymp_F)[v,\overline{v}] &=2\int_{M}\frac{|\eta_{v}|^2}{|q_{F,x}|}
=2\sum_{j,k=1}^{\convgenus}a_j\overline{a_k}\int_{M}\frac{\overline{\eta_{v_j}}\eta_{v_k}}{|q_{F,x}|} \\
&=2\sum_{j,k=1}^{\convgenus}a_j\overline{a_k}\frac{\sqrt{-1}}{4}\int_{\tilde{M}_{q_{F,x}}}\frac{\pi_{q_{F,x}}^*(\eta_{v_k})}{\omega_{q_{F,x}}}\wedge\overline{\frac{\pi_{q_{F,x}}^*(\eta_{v_j})}{\omega_{q_{F,x}}}}\\
&=\frac{1}{4}\sum_{j,k=1}^{\convgenus}{\rm Im}(\pi_{jk})a_j\overline{a_k}
\end{align*}
from \eqref{eq:period-computation} since $\Pi$ is symmetric.
This implies \eqref{eq:thm:extremallengthfunction3}.

\subsection{Vector fields tangent to Teichm\"uller disks}
We define a $(1,0)$-vector field ${\bf X}$ on $\mathbb{H}_F$ ($\cong \teich_{g,m}$) by
\begin{align}
{\bf X}={\bf X}_{{\bf y}}
&=-2\sqrt{-1}({\bf a}^{A}_{F}-\sqrt{-1}{\bf y}^{A})\veccdot {\bf Z}
\label{eq:vector-X}
\\
&=-\sqrt{-1}({\bf a}^{A}_{F}-\sqrt{-1}{\bf y}^{A})\veccdot (I_{\convgenus},\overline{\Pi})
\begin{pmatrix}
(\partial^A)^T
\nonumber
\\
(\partial^B)^T
\end{pmatrix}
\nonumber
\\
&=-
({\bf y}^{A}+\sqrt{-1}{\bf a}^{A}_{F})\veccdot (\partial^A)^T
-({\bf y}^{B}+\sqrt{-1}{\bf a}^{B}_{F})\veccdot (\partial^B)^T
\nonumber
\end{align}
(cf. \eqref{eq:periodrelation0}). The vcctor field ${\bf X}$ is tangent to the Teichm\"uller disk defined by the Hubbard-Masur differentials with vertical foliation $F$.

\begin{proposition}
\label{prop:vectorfield-X}
The tangent vector field ${\bf X}$ corresponds to the
$(1,0)$-vector associated to the infinitesimal Beltrami differential
$\overline{q_{F,x}}/|q_{F,x}|$ at $x\in \teich_{g,m}$.
\end{proposition}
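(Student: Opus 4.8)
The plan is to identify, at each $x\in\teich_{g,m}$, the holomorphic tangent vector $[\mu]\in T_x\teich_{g,m}$ determined by $\mu=\overline{q_{F,x}}/|q_{F,x}|$ explicitly in the frame $\{v_j\}_{j=1}^{\convgenus}$ of \S\ref{subsec:complex_structure}, and then to transport it through $\Phi_*$, using $\Phi_*(v_j)=Z_j$ together with the definition \eqref{eq:vector-X} of ${\bf X}$. Thus ``corresponds to'' is understood as: $\Phi_*([\mu])={\bf X}$ under the biholomorphism of Proposition \ref{prop:holomorphic-chart}.

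First I would compute the $q_{F,x}$-realization of $[\mu]$. Taking $q_0=q_{F,x}$ as the generic differential in \eqref{eq:q0-realization}, the pairing of $[\mu]$ against any $\varphi\in\mathcal{Q}_x$ is $\langle[\mu],\varphi\rangle=\int_M\mu\varphi=\int_M(\overline{q_{F,x}}/|q_{F,x}|)\varphi$. Since $\overline{\eta}/|q_{F,x}|=\overline{q_{F,x}}/|q_{F,x}|$ precisely when $\eta=q_{F,x}$, the uniqueness of the realization forces $\eta_{[\mu]}=q_{F,x}$. Consequently, using $\omega_{q_{F,x}}^2=\pi_{q_{F,x}}^*(q_{F,x})$, the Abelian differential attached to $[\mu]$ is $\pi^*_{q_{F,x}}(\eta_{[\mu]})/\omega_{q_{F,x}}=\omega_{q_{F,x}}$ itself. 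This identification, which says that $\mu$ is exactly the unit Beltrami differential pointing in the Teichm\"uller direction of $q_{F,x}$, is the conceptual heart of the statement.

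Next I would expand $[\mu]$ in the basis $\{v_k\}$. Writing $[\mu]=\sum_k c_k v_k$ and recalling that the realization correspondence \eqref{eq:qo-realization-1} is anti-complex linear, we obtain $\eta_{[\mu]}=\sum_k\overline{c_k}\,\eta_{v_k}$, hence by \eqref{eq:period-computation} $\omega_{q_{F,x}}=\pi^*_{q_{F,x}}(\eta_{[\mu]})/\omega_{q_{F,x}}=\sum_k\overline{c_k}\,\varphi^k_{F,x}/(2\sqrt{-1})$. Comparing with the canonical expansion \eqref{eq:canonicaldifferential}, $\omega_{q_{F,x}}=\sum_k({\bf a}^A_F+\sqrt{-1}{\bf y}^A)_k\varphi^k_{F,x}$, and using that the $\{\varphi^k_{F,x}\}$ are linearly independent (being $A$-normalized), I can read off $\overline{c_k}=2\sqrt{-1}({\bf a}^A_F+\sqrt{-1}{\bf y}^A)_k$, that is $c_k=-2\sqrt{-1}({\bf a}^A_F-\sqrt{-1}{\bf y}^A)_k$. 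Pushing forward by $\Phi_*$ and invoking $\Phi_*(v_k)=Z_k$ yields $\Phi_*([\mu])=-2\sqrt{-1}({\bf a}^A_F-\sqrt{-1}{\bf y}^A)\cdot{\bf Z}={\bf X}$ by \eqref{eq:vector-X}, which is the claim.

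I expect the only delicate point to be the bookkeeping of the anti-complex linearity of \eqref{eq:qo-realization-1}: because the coefficients $c_k$ get conjugated when passing to $\eta_{[\mu]}$, the sign of $\sqrt{-1}{\bf y}^A$ flips, and it is exactly this conjugation that produces the factor $({\bf a}^A_F-\sqrt{-1}{\bf y}^A)$ in ${\bf X}$ rather than $({\bf a}^A_F+\sqrt{-1}{\bf y}^A)$. Apart from this sign tracking, the argument is a direct comparison of Abelian-differential expansions and requires no further input beyond the identification $\eta_{[\mu]}=q_{F,x}$.
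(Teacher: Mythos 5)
Your argument is correct, and it interprets the statement exactly as the paper does, namely as the identity $\Phi_*([\mu])={\bf X}_{\Phi(x)}$ for $\mu=\overline{q_{F,x}}/|q_{F,x}|$; it also shares the decisive first step, that the $q_{F,x}$-realization of $[\mu]$ is $q_{F,x}$ itself, so that the attached Abelian differential $\pi^*_{q_{F,x}}(\eta_{[\mu]})/\omega_{q_{F,x}}$ is $\omega_{q_{F,x}}$. After that the mechanics genuinely differ. The paper re-runs the period variational formula \eqref{eq:differential-formula-period} for this particular tangent vector: it differentiates $\Phi^A\circ f$ and $\Phi^B\circ f$ along a holomorphic disk tangent to $[\mu]$ and arrives at ${\bf X}$ in its coordinate form, the last expression in \eqref{eq:vector-X}, with coefficients $-({\bf y}^A+\sqrt{-1}{\bf a}^A_F)$ and $-({\bf y}^B+\sqrt{-1}{\bf a}^B_F)$ against $(\partial^A)^T$ and $(\partial^B)^T$. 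You instead stay inside the frame machinery of \S\ref{subsec:complex_structure}: you expand $[\mu]=\sum_kc_kv_k$, determine the $c_k$ by matching the expansion $\sum_k\overline{c_k}\,\varphi^k_{F,x}/(2\sqrt{-1})$ (coming from \eqref{eq:period-computation} together with the anti-complex linearity of the realization) against the normalized expansion \eqref{eq:canonicaldifferential}, and then push forward term by term via $\Phi_*(v_k)=Z_k$, landing on the first expression in \eqref{eq:vector-X}. The two routes are logically close---the relation $\Phi_*(v_k)=Z_k$ you invoke is itself established in the paper from the same variational formula---but yours buys two things: no fresh period differentiation is needed, so the proposition becomes a purely linear-algebraic consequence of facts already proved in \S\ref{subsec:complex_structure}, and the provenance of the conjugate factor $({\bf a}^A_F-\sqrt{-1}{\bf y}^A)$ in ${\bf X}$ is isolated cleanly as the coefficient conjugation forced by the anti-complex-linear map \eqref{eq:qo-realization-1}. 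What the paper's computation buys is directness: it exhibits $\Phi_*([\mu])$ in the coordinate vector fields $\partial^A,\partial^B$ without needing the intermediate frame ${\bf Z}$ or the linear independence of the normalized differentials $\varphi^k_{F,x}$.
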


\begin{proof}
Let $x=(M,f)\in \teich_{g,m}$.
Let $v\in T_x\teich_{g,m}$ be
the tangent vector associated to the infinitesimal Beltrami differential
$\overline{q_{F,x}}/|q_{F,x}|$.
By the defintion of the $q_{F,x}$-realization,
$\eta_v=q_{F,x}$ (cf. \eqref{eq:q0-realization}).
Let $\lambda\mapsto f(\lambda)$ be a holomorphic disk
defined around $\lambda=0$ which satisfies
$f(0)=x$ and $f_*(\partial/\partial\lambda\mid_{\lambda=0})=v$.
From \eqref{eq:differential-formula-period},
\begin{align*}
(\Phi^A\circ f)_*
\left(
\left.
\frac{\partial}{\partial \lambda} 
\right|_{\lambda=0}
\right)
&=
\frac{1}{2\sqrt{-1}}
\left(
\int_{A_i}\overline{\frac{\pi_{q_{F,x_0}}^*(q_{F,x})}{\omega_{q_{F,x}}}}
-\overline{
\left(-\int_{A_i}\frac{\pi_{q_{F,x_0}}^*(q_{F,x})}{\omega_{q_{F,x}}}
\right)
}
\right)_{i=1}^{\convgenus}
\\
&=
-\sqrt{-1}({\bf a}^A_F-\sqrt{-1}{\bf y}^A),
\\
(\Phi^B\circ f)_*
\left(
\left.
\frac{\partial}{\partial \lambda} 
\right|_{\lambda=0}
\right)
&=
\frac{1}{2\sqrt{-1}}
\left(
\int_{B_i}\overline{\frac{\pi_{q_{F,x_0}}^*(q_{F,x})}{\omega_{q_{F,x}}}}
-\overline{
\left(-\int_{B_i}\frac{\pi_{q_{F,x_0}}^*(q_{F,x})}{\omega_{q_{F,x}}}
\right)
}
\right)_{i=1}^{\convgenus}
\\
&=
-\sqrt{-1}({\bf a}^B_F-\sqrt{-1}{\bf y}^B).
\end{align*}
This means $\Phi_*({\bf v})={\bf X}_{\Phi(x)}$
from \eqref{eq:vector-X}.
%
\end{proof}
\subsection{Monge-Amp\`ere measures associated with extremal lengths}
From \eqref{eq:Omega-def} and \eqref{eq:thm:extremallengthfunction3},
\begin{align}
(dd^c\extsymp_F)^{\convgenus}
&=
\convgenus!\left(\frac{\sqrt{-1}}{2}\right)^{\convgenus}
\det ({\rm Im}(\Pi))\Omega^1\wedge\overline{\Omega}^{1} 
\wedge \cdots \wedge\Omega^{\convgenus}\wedge\overline{\Omega}^{\convgenus}
\label{ddcNExt}
\\
&=
\convgenus!\,dy^A_1\wedge dy^B_1\wedge
\cdots \wedge dy^A_{\convgenus}\wedge dy^B_{\convgenus}.
\nonumber
\end{align}
Namely,
the Monge-Amp\`ere measure of $\extsymp_F$
coincides with the constant multiple of 
the Euclidean measure under the chart.
Set ${\bf W}=-({\bf X}+\overline{{\bf X}})/4$
and ${\bf W}^c=({\bf X}-\overline{{\bf X}})/4\sqrt{-1}
=\mathcal{J}({\bf W})$.
Then,
\begin{align}
\label{eq:WW}
{\bf W}
=\frac{1}{2}
\left({\bf y}^{A}\veccdot (\partial^A)^T
+
{\bf y}^{B}\veccdot (\partial^B)^T
\right)
\ \ \mbox{and} \ \
{\bf W}^c
=\frac{1}{2}
\left({\bf a}^{A}_{F}\veccdot (\partial^A)^T
+
{\bf a}^{B}_{F}\veccdot (\partial^B)^T
\right).
\end{align}
From Proposition \ref{prop:derivatives-extremallengthfunction},
we have
\begin{align}
d\extsymp_F[{\bf W}]
&=
d^c\extsymp_F[{\bf W}^c]
=
\frac{1}{2}\extsymp_F,
\label{eq:extremal-differential-W}
\\
d\extsymp_F[{\bf W}^c]
&=
d^c\extsymp_F[{\bf W}]
=
0,
\label{eq:extremal-differential-Wc}
\\
{\bf W}\lrcorner dd^c\extsymp_F
&=
d^c\extsymp_F,
\label{eq:W-contract-ddcExt1}
\\
{\bf W}^c\lrcorner dd^c\extsymp_F
&=
-d\extsymp_F,
\label{eq:Wc-contract-ddcExt1}
\\
{\bf W}\lrcorner (dd^c\extsymp_F)^{\convgenus}
&=
\convgenus(dd^c\extsymp_F)^{\convgenus-1}\wedge d^c\extsymp_F
\label{eq:W-contract-ddcExt}
\end{align}
on $\mathbb{H}_F\cong \teich_{g,m}$,
where $\lrcorner$ stands for the contraction
(e.g.
\cite{MR1211412}).
Define a function $\extrecip_G$ on $\teich_{g,m}$ by
$$
\extrecip_G({\bf y})=-\frac{1}{\extsymp_G({\bf y})}
=-\frac{1}{\ext_{\Phi^{-1}({\bf y})}(G)}
$$
for ${\bf y}\in \mathbb{H}_F$ and $G\in \mathcal{MF}$.
From \cite[Theorem 5.3]{MR3715450},
$\extrecip_{G}$ is a continuous PSH-function on $\teich_{g,m}$
for all $G\in \mathcal{MF}$.
Notice
from \eqref{eq:extremal-differential-W},
\eqref{eq:extremal-differential-Wc},
\eqref{eq:W-contract-ddcExt1}
and 
\eqref{eq:Wc-contract-ddcExt1}
that
\begin{align*}
{\bf W}\lrcorner (dd^c\extrecip_F)
&=\frac{{\bf W}\lrcorner (
\extsymp_F dd^c\extsymp_F-2d\extsymp_F\wedge d^c\extsymp_F)}
{\extsymp_F^3}=0,
\\
{\bf W}^c\lrcorner (dd^c\extrecip_F)
&=\frac{{\bf W}^c\lrcorner (
\extsymp_F dd^c\extsymp_F-2d\extsymp_F\wedge d^c\extsymp_F)}
{\extsymp_F^3}=0.
\end{align*}
Hence,
the $(1,0)$-vector field ${\bf X}$ on $\mathbb{H}_F$ is in the null-space
of the complex Hessian $dd^c\extrecip_F$ of $\extrecip_F$,
and $\extrecip_F$ satisfies
the \emph{homogeneous Monge-Amp\`ere equation}
$$
(dd^c\extrecip_F)^{\convgenus}=0
$$
on $\mathbb{H}_F\cong \teich_{g,m}$
(cf. \cite[\S3.1]{MR1150978}).
From \eqref{eq:W-contract-ddcExt},
we obtain
\begin{align}
\label{ddcNresExt1}
\left(dd^c\extrecip_F\right)^{\convgenus-1}\wedge d^c\extrecip_F
&=
\frac{(dd^c\extsymp_F)^{\convgenus-1}\wedge d^c\extsymp_F}
{(\extsymp_F)^{2\convgenus}}
=
\frac{1}{\convgenus}
{\bf W}\lrcorner
\left(
\frac{(dd^c\extsymp_F)^{\convgenus}}
{(\extsymp_F)^{2\convgenus}}
\right).
\end{align}
In particular
$(dd^c\extrecip_F)^{\convgenus-1}\ne 0$.

%
%
\subsection{Measures on the horospheres}
For $G\in \mathcal{MF}$ and $R>0$,
we define the \emph{horosphere} for $G$ by
\begin{align*}
{\rm HS}(G;R)=\{x\in\teich_{g,m}\mid \ext_{x}(G)=R^2\}.
\end{align*}
From \eqref{eq:extremallengthfunction1-1},
under the coordinates in Proposition \ref{prop:chart},
the horosphere ${\rm HS}(F;R)$ is represented as
the affine subspace
\begin{equation}
\label{eq:HS-chart}
\{
({\bf y}^{A},{\bf y}^{B})\in
\mathbb{H}_F
\mid
{\bf a}^{A}_{H}\veccdot ({\bf y}^{B})^{T}-
{\bf a}^{B}_{H}\veccdot ({\bf y}^{A})^{T}=2R^2\}.
\end{equation}
Henceforth,
we also denote by ${\rm HS}(F;R)$ the set \eqref{eq:HS-chart}
under the coordinates in Proposition \ref{prop:chart}.
From Proposition \ref{prop:vectorfield-X},
the tangent vector field ${\bf W}$ is the gradient vector field 
of the extremal length function for $F$.
From \eqref{ddcNresExt1},
the contraction
\begin{equation}
\label{eq:measure-on-horosphere}
d\HSmeas_{F,R}
=
\frac{1}{\convgenus}
{\bf W}\lrcorner
\left(
\frac{(dd^c\extsymp_F)^{\convgenus}}
{(\extsymp_F)^{2\convgenus}}
\right)
=
\left(dd^c\extrecip_F\right)^{\convgenus-1}\wedge
d^c\extrecip_F
\end{equation}
is a non-trivial Borel measure on the horosphere ${\rm HS}(F;R)$.
Let $t\in\mathbb{R}$,
we set $T_t({\bf y})=e^{2t}{\bf y}$.
From 
\eqref{eq:extremallengthfunction1-1},
\eqref{eq:WW} and
\eqref{eq:measure-on-horosphere},
\begin{equation}
\label{eq:isomeasure-mu}
\HSmeas_{F,e^{2t}R}(T_t(E))=\HSmeas_{F,R}(E)
\end{equation}
for all Borel set $E\subset {\rm HS}(F,R)$.
From \eqref{eq:measure-on-horosphere} and \eqref{eq:isomeasure-mu},
\begin{equation}
\label{compactizon}
\HSmeas_{F,R}(E)
\asymp
\int_{\hat{E}}
(dd^c\extsymp_F)^{\convgenus}
\end{equation}
holds for any Borel set $E\subset {\rm HS}(F,R)$ and $R>0$,
where $\hat{E}=\{T_{t-\log (\sqrt{2}R)}({\bf y})\in \mathbb{H}_F\mid {\bf y}\in E, 0\le t\le (\log 2)/2\}$
and the constants for the comparison depend only on the topology of $\Sigma_{g,m}$.

\subsection{Comparison between $\HSmeas_{F,R}$ and $\PThursM^{x_0}$}
\label{subsec:Remark-embedding}
Consider a mapping
\begin{equation}
\label{eq:HF-MF}
\Psi\colon
\mathbb{H}_F\ni {\bf y}\mapsto h(q_{F,\Phi^{-1}({\bf y})})\in \mathcal{MF}.
\end{equation}
Since the piecewise linear structure on $\mathcal{MF}$
is determined by the intersection number function
associated with some finite system of simple closed curves,
the mapping \eqref{eq:HF-MF} is a piecewise linear homeomorphism onto its image
(cf.  \cite{MR1810534},
\cite[Expos\'e 6 and Appendice]{MR568308}
and \cite{MR662738}).
Hence the pushforward measure
$\Psi_*(\left(dd^c\extsymp_F\right)^{\convgenus})$
via the mapping \eqref{eq:HF-MF} is locally comparable with the Thurston measure
$\ThursM$.
Compare another treatment of $\ThursM$ due to Masur in \cite[\S4]{MR644018}.

Let $\mathcal{PMF}^F\subset \mathcal{PMF}$ be the projection of the image $\Psi(\mathbb{H}_F)\subset \mathcal{MF}-\{0\}$. $\mathcal{PMF}^F$ consists of the projective classes of measured foliations transverse to $F$.
For $x\in \teich_{g,m}$,
let $\mathcal{SMF}_x^F\subset \mathcal{SMF}_x$ be the corresponding 
subset via the identification $\mathcal{SMF}_x\cong \mathcal{PMF}$ discussed in \S\ref{subsec:Thurstonmeasure}.
The set $\mathcal{SMF}^F_x$ is an open subset of $\mathcal{SMF}_x$.
For $R>0$,
we define a homeomorphism
\begin{equation}
\label{eq:TRHS}
{\bf T}_{R,x}\colon \mathcal{SMF}_x^F\to {\rm HS}(F,R)
\end{equation}
in such a way that for $G\in \mathcal{SMF}_x^F$,
$\Psi({\bf T}_{R,x}(G))$ is projectively equivalent to $G$
in $\mathcal{MF}$.
%
\begin{proposition}[Comparison between $\HSmeas_{F,R}$ and $\PThursM^{x}$]
\label{prop:comparison}
Let $x\in \teich_{g,m}$.
Let $G_0\in \mathcal{SMF}_{x}$
such that $G_0$ and $F$ are transverse.
Let $U$ be a neighborhood of $G_0$ with $\overline{U}\subset \mathcal{SMF}^F_{x}$.
Then,
$$
\HSmeas_{F,R}({\bf T}_{R,x}(E))\asymp \PThursM^{x}(E)
$$
for all $R>0$ and all Borel set $E$ on $U$,
where $\asymp$ means that the measures are comparable
with constants independent of the choice of the set $E$ in $U$,
but may depend on $U$.
\end{proposition}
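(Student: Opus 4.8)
The plan is to reduce the statement to the local comparison between the pushed-forward Monge--Amp\`ere measure $\Psi_*\bigl((dd^c\extsymp_F)^{\convgenus}\bigr)$ and the Thurston measure $\ThursM$ recorded in \S\ref{subsec:Remark-embedding}, and then to absorb the discrepancy between the two radial truncations---the $F$-horospheres $\{\extsymp_F=R^2\}$ on one side and the extremal-length unit sphere $\mathcal{SMF}_x$ on the other---into a factor that is bounded above and below on $\overline U$. First I would apply \eqref{compactizon} to replace $\HSmeas_{F,R}({\bf T}_{R,x}(E))$, up to constants depending only on the topology of $\Sigma_{g,m}$, by $\int_{\widehat{{\bf T}_{R,x}(E)}}(dd^c\extsymp_F)^{\convgenus}$, where $\widehat{{\bf T}_{R,x}(E)}$ is the shell obtained by flowing ${\bf T}_{R,x}(E)$ under the dilations $T_t$ through an interval of fixed length.

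The key structural input is a pair of matching homogeneities. On the one hand $\extsymp_F$ is homogeneous of degree one in ${\bf y}$, as one reads off from \eqref{eq:extremallengthfunction1-1} (equivalently $d\extsymp_F[{\bf W}]=\tfrac12\extsymp_F$ in \eqref{eq:extremal-differential-W}), so $T_t$ carries ${\rm HS}(F;R)$ to ${\rm HS}(F;e^tR)$. On the other hand the map $\Psi$ of \eqref{eq:HF-MF} is homogeneous for $T_t$: since $i(\Psi({\bf y}),\beta)=\tfrac12\,\mathcal{H}_F(\Phi^{-1}({\bf y}))\cdot\tilde\beta_{q_0}$ is linear in ${\bf y}$ by \eqref{eq:intersectionnumber-horizontal} for $\beta$ carried by $\tau_{v,q_0}$, and finitely many such $\beta$ separate points of $\mathcal{MF}$ (as in the proof of Theorem \ref{thm:embedding-H1}), one gets $\Psi(T_t{\bf y})=e^{2t}\Psi({\bf y})$. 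As $\Psi({\bf T}_{R,x}(G))$ is by definition proportional to $G$, these two facts give ${\bf T}_{e^sR,x}(G)=T_s({\bf T}_{R,x}(G))$ and, writing $\Psi({\bf T}_{R,x}(G))=\tilde c(G)\,R^2\,G$ with $\tilde c(G)>0$, exhibit the image $\Psi(\widehat{{\bf T}_{R,x}(E)})$ as the shell $\{\lambda G: G\in E,\ \lambda\in\tilde c(G)\,[\tfrac12,1]\}$, which is independent of $R$ and lies in a fixed compact subset of the cone over $\overline U$.

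With the relevant region pinned down and independent of $R$, I would push the integral forward by the piecewise-linear homeomorphism $\Psi$ and invoke the local comparability of $\Psi_*\bigl((dd^c\extsymp_F)^{\convgenus}\bigr)$ with $\ThursM$; since the shell sits in one fixed compact region, covered by finitely many train-track charts, the comparison constant is uniform in $E\subset U$ and in $R$. It then remains to compare $\ThursM$ of the shell $\{\lambda G:G\in E,\ \lambda\in\tilde c(G)[\tfrac12,1]\}$ with $\ThursM$ of the cone $\{tG:G\in E,\ 0\le t\le1\}$ over $E\subset\mathcal{SMF}_x$, the latter being exactly ${\rm Vol}_{Th}(x)\,\PThursM^x(E)$ by the cone definition of $\PThursM^x$. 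Disintegrating $\ThursM$ in the radial coordinate adapted to $\mathcal{SMF}_x$ and using the homogeneity \eqref{eq:multple-TH-measure}, both are radial integrals over $E$ whose ratio is governed by $\tilde c(G)^{2\convgenus}$.

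The main obstacle, and the only place where the transversality of $G_0$ and $F$ and the hypothesis $\overline U\subset\mathcal{SMF}^F_x$ are truly used, is the final step: bounding $\tilde c$ away from $0$ and $\infty$ on $\overline U$. The scalar $\tilde c(G)$ degenerates precisely as $[G]$ approaches a projective class no longer transverse to $F$, i.e. as one approaches the boundary of $\mathcal{PMF}^F$ (where $h(q_{F,\cdot})$ ceases to be proportional to $G$); continuity of $\tilde c$ together with compactness of $\overline U$ inside the open set $\mathcal{SMF}^F_x$ then yields a two-sided bound. Chaining the three comparisons gives $\HSmeas_{F,R}({\bf T}_{R,x}(E))\asymp\PThursM^x(E)$ with constants depending on $U$ (through these bounds) and on $x$ (through ${\rm Vol}_{Th}(x)$) but not on $E$ or $R$.
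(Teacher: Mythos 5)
Your proposal is correct and follows essentially the same route as the paper's (much terser) proof: both reduce the claim via \eqref{compactizon} to the Monge--Amp\`ere mass of a radial shell, push it forward through the piecewise-linear homeomorphism $\Psi$ to compare with the Thurston measure, and use the scaling homogeneity (the paper's \eqref{isomeasure-mu}) together with the compactness of $\overline{U}$ inside the open set $\mathcal{SMF}^F_x$ to make the constants uniform in $E$ and $R$. The details you supply---the homogeneity $\Psi\circ T_t=e^{2t}\Psi$, the resulting $R$-independence of the image shell, and the two-sided bound on $\tilde c$---are exactly what the paper compresses into its appeal to ``the above discussion'' and \eqref{compactizon}.
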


\begin{proof}
Since the image ${\bf T}_{R,x}(U)$ is relatively compact in $\mathbb{H}_F$,
from the above discussion and \eqref{compactizon},
the measure
$\HSmeas_{F,R}$ is comparable with the pushforward
measure $({\bf T}_{R,x})_*(\PThursM^{x})$ on ${\bf T}_{R,x}(U)$.
From \eqref{eq:isomeasure-mu} (or \eqref{compactizon} again),
the constants for the comparison are independent of $R>0$.
\end{proof}

\subsection{Monge-Amp\`ere mass of reciprocal of extremal length}
\label{subsec:Pluri-Green-Extremal-length}
Krushkal \cite{MR1142683}
observed that
the pluricomplex Green function $g_{\teich_{g,m}}$ on $\teich_{g,m}$
is represented as
\begin{equation}
\label{eq:Green-function}
g_{\teich_{g,m}}(x,y)=\log\tanh d_T(x,y)
\quad
(x,y\in \teich_{g,m}).
\end{equation}
See also \cite{miyachi-pluripotentialtheory1} for another proof.

For $G,H\in \mathcal{MF}$,
we define
a continuous PSH-function on $\teich_{g,m}$ by
$$
\extrecip_{G,H}(x)=\max\{\extrecip_G(x),\extrecip_H(x)\}.
$$

\begin{proposition}
\label{prop:exhausion-F-G}
When $G,H\in \mathcal{MF}$ are transverse,
the function $\extrecip_{G,H}$ is a continuous
and negative PSH-exhaustion on $\teich_{g,m}$,
and satisfies
\begin{equation}
\label{eq:comparison-uFG-AND-dT}
u_{G,H}(x)\asymp -e^{-2d_T(x_0,x)}
\quad
(x\in \teich_{g,m}),
\end{equation}
where the constants for the comparison depend only on $G$, $H$ and $x_0$.
\end{proposition}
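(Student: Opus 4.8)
The plan is to work throughout with the reciprocal $\Theta(x)=\max\{\ext_x(G),\ext_x(H)\}$, so that $\extrecip_{G,H}(x)=-1/\Theta(x)$ is real-valued and negative. The qualitative properties are immediate: each $\extrecip_G$ is a continuous, negative PSH function on $\teich_{g,m}$ by \cite[Theorem 5.3]{MR3715450}, and the maximum of two continuous negative PSH functions is again continuous, negative and PSH. Hence the entire content of the proposition is the two-sided comparison $\Theta(x)\asymp e^{2d_T(x_0,x)}$: granting it, \eqref{eq:comparison-uFG-AND-dT} is exactly $\extrecip_{G,H}=-1/\Theta\asymp -e^{-2d_T(x_0,\cdot)}$, and since $d_T$ is complete and proper, the lower bound $\Theta(x)\ge c\,e^{2d_T(x_0,x)}$ forces each sublevel set $\{\extrecip_{G,H}<r\}=\{\Theta<1/|r|\}$ to sit inside a closed (hence compact) Teichm\"uller ball, which yields the exhaustion property.

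The upper bound $\Theta(x)\le C\,e^{2d_T(x_0,x)}$ is the easy half. Applying the conformal quasi-invariance \eqref{eq:comparison-K} with $y=x_0$ gives $\ext_x(G)\le e^{2d_T(x_0,x)}\ext_{x_0}(G)$, and likewise for $H$, so $\Theta(x)\le e^{2d_T(x_0,x)}\max\{\ext_{x_0}(G),\ext_{x_0}(H)\}$.

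The lower bound is the crux, and this is where transversality is used. First, Minsky's inequality \eqref{eq:minsky-inequality} gives, for every $F\in\mathcal{MF}$, both $i(F,G)^2\le\ext_x(F)\ext_x(G)\le\ext_x(F)\,\Theta(x)$ and $i(F,H)^2\le\ext_x(F)\,\Theta(x)$, whence
\[ \ext_x(F)\ \ge\ \frac{\max\{i(F,G)^2,\,i(F,H)^2\}}{\Theta(x)}\ \ge\ \frac{\bigl(i(F,G)+i(F,H)\bigr)^2}{4\,\Theta(x)}. \]
Next I would extract a uniform transversality constant: the function $F\mapsto i(F,G)+i(F,H)$ is continuous and, because $G$ and $H$ are transverse, strictly positive on the compact sphere $\mathcal{SMF}_{x_0}$; hence it is bounded below there by some $\delta>0$, and by homogeneity $i(F,G)+i(F,H)\ge\delta\,\ext_{x_0}(F)^{1/2}$ for all $F$. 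Combining these, $\ext_x(F)/\ext_{x_0}(F)\ge\delta^2/(4\,\Theta(x))$ for every $F\neq0$, so taking the infimum over $F$ and invoking Kerckhoff's formula in the reciprocal form $\inf_{F}\ext_x(F)/\ext_{x_0}(F)=e^{-2d_T(x_0,x)}$ gives $\Theta(x)\ge(\delta^2/4)\,e^{2d_T(x_0,x)}$, as desired.

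The main obstacle is exactly this last (lower) bound: it is not a formal consequence of Minsky's inequality, which only bounds \emph{products} of extremal lengths from below, and the genuinely geometric input is the uniform constant $\delta$ produced from transversality via compactness of $\mathcal{SMF}_{x_0}$. The only remaining points requiring care are the passage from $\mathcal{S}$ to all of $\mathcal{MF}$ (justified by continuity and homogeneity of both sides) and the use of Kerckhoff's formula in its reciprocal form, which is legitimate because $d_T$ is symmetric.
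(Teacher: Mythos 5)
Your proof is correct and follows essentially the same route as the paper: a uniform transversality constant obtained from compactness of $\mathcal{SMF}_{x_0}$, Minsky's inequality, and the sharpness of the comparison between extremal length and $e^{2d_T(x_0,\cdot)}$. The only immaterial difference is that the paper substitutes the specific extremal foliation $J\in\mathcal{SMF}_{x_0}$ with $\ext_x(J)=e^{-2d_T(x_0,x)}$ supplied by Teichm\"uller's theorem (and works with the sum $\ext_x(G)+\ext_x(H)$ rather than the max), whereas you take an infimum over all of $\mathcal{MF}$ and invoke Kerckhoff's formula---two formulations of the same underlying fact.
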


\begin{proof}
Since $G$ and $H$ are transverse,
there is an $\epsilon_0>0$ such that $i(G,J)^2+i(H,J)^2\ge \epsilon_0$
for all $J\in \mathcal{SMF}_{x_0}$.
Minsky's inequality \eqref{eq:minsky-inequality}
implies
$$
\ext_x(G)+\ext_x(H)\ge (i(G,J)^2+i(H,J)^2)/\ext_x(J)\ge \epsilon_0/\ext_x(J).
$$
From Teichm\"uller's theorem,
for any $x\in \teich_{g,m}-\{x_0\}$,
there is a unique $J\in \mathcal{SMF}_{x_0}$ satisfying
$\ext_x(J)=e^{-2d_T(x_0,x)}$
(cf. \cite[\S5.2.3]{MR1215481}).
From \eqref{eq:comparison-K}
\begin{equation}
\label{eq:comparison-exhaustion-ext1}
\epsilon_0e^{2d_T(x_0,x)}\le 
\ext_x(G)+\ext_x(H)
\le (\ext_{x_0}(G)+\ext_{x_0}(H))e^{2d_T(x_0,x)}.
\end{equation}
Since extremal length functions are positive functions,
\begin{equation}
\label{eq:comparison-exhaustion-ext2}
-\frac{2}{\ext_x(G)+\ext_x(H)}
\le u_{G,H}(x)
\le 
-\frac{1}{\ext_x(G)+\ext_x(H)}
\end{equation}
for all $x\in \teich_{g,m}$.
The comparison \eqref{eq:comparison-uFG-AND-dT}
follows from \eqref{eq:comparison-exhaustion-ext1}
and \eqref{eq:comparison-exhaustion-ext2}.
\end{proof}

Let $K$ be a compact set in $\teich_{g,m}$ containing $x_0$
in the interior.
From the Krushkal formula \eqref{eq:Green-function}
and Proposition \ref{prop:exhausion-F-G},
we have
\begin{equation}
\label{eq:comparison-green-dT}
g_{\teich_{g,m}}(x_0,x)\asymp -e^{-2d_T(x_0,x)}\asymp \extrecip_{G,H}(x)
\quad
(x\in \teich_{g,m}-K),
\end{equation}
where the constants for the first comparison depend only on $K$.
%
%


\begin{proposition}[Finiteness of MA-mass and Boundary measure for $\extrecip_{G,H}$]
\label{prop:ddcuFG-volume-finite}
When $G,H\in \mathcal{MF}$ are transverse,
$$
\int_{\teich_{g,m}}(dd^c\extrecip_{G,H})^{\convgenus}<\infty,
$$
and the boundary measure $\mu_{\extrecip_{G,H}}$ of $\extrecip_{G,H}$ and the pluriharmonic measure are comparable  on the Bers boundary in the sense that they are mutually absolutely continuous and the Radon-Nikodym derivatives are bounded.
\end{proposition}

\begin{proof}
We identify $\Bers{x_0}$ with $\teich_{g,m}$ via the Bers embedding
(cf. \S\ref{subsec:Bers-slice}).
Fix $r<0$.
We set
$$
g_{\teich_{g,m},x_0;r}(x)=\max\{r,g_{\teich_{g,m}}(x_0,x)\}.
$$
Then,
$g_{\teich_{g,m},x_0;r}$ is a continuous PSH-exhaustion on $\teich_{g,m}$.
Since $g_{\teich_{g,m},x_0;r}$ coincides with $g_{\teich_{g,m}}(x_0,\,\cdot\,)$
in the outside of a compact set containing $x_0$,
$$
\int_{\teich_{g,m}}(dd^cg_{\teich_{g,m},x_0;r})^{\convgenus}<\infty.
$$
Since both $g_{\teich_{g,m},x_0;r}(x)$
and $\extrecip_{G,H}(x)$ are negative bounded continuous exhaustions,
from \eqref{eq:comparison-green-dT},
there is a constant $A=A(G,H,x_0,r)$, $B=B(G,H,x_0,r)>0$ such that
$$
Ag_{\teich_{g,m},x_0;r}(x)<\extrecip_{G,H}(x)<Bg_{\teich_{g,m},x_0;r}(x)\quad
(x\in \teich_{g,m}).
$$
From Proposition \ref{prop:thm-BT}, the Monge-Amp\`ere mass of $\extrecip_{G,H}$ is finite. Since $g_{\teich_{g,m},x_0;r}$ coincides with the pluricomplex Green function outside a compact set, the boundary measure of $\extrecip_{G,H}$ and the pluriharmonic measure are comparable on the Bers boundary.
\end{proof}

%

\subsection{Behavior of $\extrecip_{F,G}$ and $\extrecip_F$ around $\partial \teich_{g,m}$}
We continue to identify $\Bers{x_0}$ with $\teich_{g,m}$ via the Bers embedding.
Let $\Gamma_0$ be the marked Fuchsian group representing $x_0$
as \S\ref{subsec:Bers-slice}.

\begin{proposition}
\label{prop:F-G-N}
For any $\varphi\in \partial \Bers{x_0}$,
there are $F,G\in \mathcal{MF}$ and a neighborhood $\hat{N}$
of $\varphi$ in $A_{2}(\mathbb{H}^{*},\Gamma_{0})$
such that
\begin{enumerate}
\item[{\rm (1)}]
$F$ is essentially complete
and $F$ and $G$ are transverse; 
\item[{\rm (2)}]
for any $x\in \teich_{g,m}$,
$(\Xi_{x_0}\circ \boldsymbol{\psi}_x)^{-1}(\hat{N})$
is contained in an open set $\mathcal{SMF}_x$
which is relatively compact in $\mathcal{SMF}^{F}_x$;
and
\item[{\rm (3)}]
$\extrecip_{F,G}(x)=\extrecip_F(x)$
for $x\in \hat{N}\cap \Bers{x_0}$.
\end{enumerate}
\end{proposition}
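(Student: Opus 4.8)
The plan is to pin down a measured foliation $G$ that is \emph{pinched} at $\varphi$ and to take $F$ essentially complete and transverse to the pinching data, so that $F$ itself does \emph{not} degenerate as $x\to\varphi$. Then the reciprocal extremal length $\extrecip_G$ will dominate $\extrecip_F$ near $\varphi$ (equivalently $\ext_x(G)\le\ext_x(F)$), which is exactly (3), while the robust transversality of $F$ will give (2). Concretely, for $\varphi\in\partial^{mf}\Bers{x_0}$ with ending lamination $\Lambda$ (minimal and filling), I would take $G$ with $L(G)=\Lambda$; for a cusp $\varphi\in\partial^{cusp}\Bers{x_0}$ I would take for $G$ a foliation carried by the end invariant of $\Gamma_\varphi$ (the accidental parabolics together with the ending laminations of the complementary pieces). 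In either case $G$ records precisely the directions along which the conformal structure collapses as $x\to\varphi$.

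For the choice of $F$ I would use that transversality of $F$ to a \emph{filling} foliation depends only on its support: since $\Lambda$ fills, $i(H,G)=0$ forces $L(H)\subset\Lambda$, so $F\pitchfork G$ holds iff no nonzero $H$ has $L(H)\subset\Lambda$ and $i(H,F)=0$, a condition on $(\Lambda,F)$ alone. In particular every $[G']\in\Xi_{x_0}^{-1}(\varphi)$, all sharing the support $\Lambda$, is transverse to $F$ as soon as $F$ is. As essentially complete foliations are generic in $\mathcal{MF}$ and transversality is an open condition, I can choose $F$ essentially complete with $F\pitchfork\Lambda$; moreover I would arrange $F$ to be transverse to an entire Hausdorff-neighborhood of $\Lambda$ in the (compact) space of geodesic laminations, since the set of laminations meeting $L(F)$ trivially is closed. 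This robustness is what feeds condition (2).

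For (2), since $\proje_x$ is a homeomorphism and transversality to $F$ is a property of projective classes, one has $(\Xi_{x_0}\circ\proje_x)^{-1}(\hat N)=\proje_x^{-1}\!\big(\Xi_{x_0}^{-1}(\hat N\cap\partial^{mf}\Bers{x_0})\big)$, so it suffices that $\Xi_{x_0}^{-1}(\hat N\cap\partial^{mf}\Bers{x_0})$ be relatively compact in the open set $\mathcal{PMF}^F$, uniformly in $x$. As $\Xi_{x_0}$ is a closed continuous surjection with compact fibers and $\Xi_{x_0}^{-1}(\varphi)\subset\mathcal{PMF}^F$, the perfect-map argument applies: with $C=\mathcal{PMF}^{mf}\setminus\mathcal{PMF}^F$ closed, $W=\partial^{mf}\Bers{x_0}\setminus\Xi_{x_0}(C)$ is an open neighborhood of $\varphi$ with $\Xi_{x_0}^{-1}(W)\subset\mathcal{PMF}^F$, and I take $\hat N$ with $\hat N\cap\partial^{mf}\Bers{x_0}\subset W$. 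The delicate point is relative compactness rather than mere containment: a limit of minimal filling classes may leave $\mathcal{PMF}^{mf}$, and I must check its supporting geodesic laminations Hausdorff-accumulate only on laminations transverse to $F$ — this is exactly where the robust choice of $F$ (transverse to a Hausdorff-neighborhood of $\Lambda$) is used.

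Finally, for (3) I would invoke Minsky's inequality \eqref{eq:minsky-inequality}. Transversality gives $i(F,G)>0$ (taking $H=F$ in the definition, using $i(F,F)=0$), so $\ext_x(F)\ext_x(G)\ge i(F,G)^2$; hence $\ext_x(G)\le i(F,G)$ forces $\ext_x(F)\ge i(F,G)^2/\ext_x(G)\ge\ext_x(G)$, and therefore $\extrecip_{F,G}(x)=\max\{\extrecip_F,\extrecip_G\}(x)=\extrecip_F(x)$. Thus (3) reduces to the bound $\ext_x(G)\le i(F,G)$ throughout $\hat N\cap\Bers{x_0}$, which follows once $\limsup_{x\to\varphi}\ext_x(G)<\infty$ after replacing $G$ by a small multiple $cG$ (this alters neither the essential completeness of $F$ nor transversality). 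The hard part is precisely this degeneration estimate: that the extremal length of a foliation supported on the pinched lamination stays bounded — in fact tends to $0$ — as $x\to\varphi$ in the Bers compactification. I would establish it by combining Proposition \ref{prop:Teichmuller-limit} (which already yields $\ext_{z_t}(H)=e^{-2t}\ext_x(H)\to0$ along Teichm\"uller rays landing at such $\varphi$) with the end-invariant description of \cite{MR2925381}: the ending lamination is unrealizable in $\mathbb H^3/\Gamma_\varphi$, curves exiting the degenerate end converge to $\Lambda$ and become short on the varying conformal structure $M_x$ as $x\to\varphi$, and the collar estimate (bounding the extremal length of a short curve by a multiple of its hyperbolic length) then forces $\ext_x(G)\to0$. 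This degeneration is the geometric heart of the proposition; the remaining items are soft topology.
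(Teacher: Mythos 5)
Your setup for (1)--(2) is close in spirit to the paper's (the paper also takes $F$ essentially complete and non-degenerating at $\varphi$ --- there, $F$ is chosen \emph{realizable}, ${\rm leng}_{\rho_\varphi}(F)>0$, and the neighborhood $\hat N$ comes from continuity of the length function; your perfect-map argument is a reasonable variant). The genuine gap is in (3): your whole argument rests on the claim that, for $G$ supported on the pinched data of $\varphi$, one has $\limsup_{x\to\varphi}\ext_x(G)<\infty$ (indeed $\ext_x(G)\to 0$) as $x\to\varphi$ in the Bers compactification. This is false, and your proposed justification cannot work because Bers' inequality goes the wrong way: continuity of the length function gives ${\rm leng}_{\rho_{\varphi_n}}(G)\to 0$ for the \emph{three-manifold} lengths, but the inequality ${\rm leng}_{\rho}(H)\le {\rm leng}$ on the conformal boundary bounds the manifold length \emph{by} the surface length, not the reverse, so ``short in the end'' never implies ``short on $x$''. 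A concrete counterexample exists already for $(g,m)=(1,1)$: by \cite[Theorem B]{MR1689341} the Bers compactification is the closed disk parametrized by end invariants, so $x_n\to\varphi_\theta$ in $\overline{\Bers{x_0}}$ exactly when the top conformal structures satisfy $\tau_n\to\theta$ in $\overline{\mathbb{H}}$; taking $\theta$ irrational of bounded type (so $[F_\theta]$ is uniquely ergodic) and the tangential approach $\tau_n=\theta+1/n+\sqrt{-1}/n^{3}$, the formula for $q_{F_{[a,b]},\tau}$ in \S\ref{subsec:phenomena-by-averaging} gives $\ext_{\tau_n}(F_\theta)=|\tau_n-\theta|^2/{\rm Im}(\tau_n)\asymp n\to\infty$. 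So even boundedness fails, and no rescaling $G\mapsto cG$ can repair it. (This does not contradict Claim \ref{claim:3-1}: Bers convergence yields Gardiner--Masur convergence to $[F_\varphi]$, which is purely projective and controls no absolute scale of extremal length.)

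The paper's proof avoids this trap by never estimating the extremal length of the pinched data at all. It takes $F$ realizable at $\varphi$, takes \emph{any} $G'$ transverse to $F$, and proves only that the \emph{ratio} $\ext_x(G')/\ext_x(F)$ stays bounded on $\hat N\cap\Bers{x_0}$ (Claim \ref{claim:1-1}), after which one sets $G=G'/M^{1/2}$. The ratio bound is a compactness/contradiction argument: for $x_n\to\varphi_0$ with ratio blowing up, consider the vertical foliations $G_{x_n}$ of the Teichm\"uller geodesics from $x_0$ to $x_n$, so $\ext_{x_n}(G_{x_n})=e^{-2d_T(x_0,x_n)}$; pass to limits $G_\infty$, $H_\infty$ with $i(G_\infty,H_\infty)=0$; use \cite[Theorem 5.2]{MR3330544} together with essential completeness of $F$ to force $i(F,G_\infty)\ne 0$; then Minsky's inequality \eqref{eq:minsky-inequality} gives $\ext_{x_n}(F)\ge\epsilon_0^2e^{2d_T(x_0,x_n)}$, which dominates the Kerckhoff bound $\ext_{x_n}(G')\le\ext_{x_0}(G')e^{2d_T(x_0,x_n)}$. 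In other words, both $\ext_x(F)$ and $\ext_x(G)$ may blow up as $x\to\varphi$; what can be controlled is that $\ext_x(F)$ grows at the maximal possible rate, so that it dominates every fixed foliation. Your argument for (3) would have to be replaced by a relative estimate of this kind.
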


\begin{proof}
Take an essentially complete $F\in \mathcal{MF}$
whose support is realizable in the Kleinian manifold associated to $\varphi$,
that is, ${\rm leng}_{\rho_\varphi}(F)>0$.
Since the length function is continuous on the Bers compactification,
we can take a small neighborhood $\hat{N}$ of $\varphi$ in $A_{2}(\mathbb{H}^{*},\Gamma_{0})$
such that the ending laminations of the Kleinian surface groups in 
the closure of $\hat{N}\cap \Bers{x_0}$ 
does not coincide with $L(F)$
(see also \cite[Lemma 30]{MR1413855}).
One can check that such an $\hat{N}$ satisfies the condition (2).

We claim 
\begin{claim}
\label{claim:1-1}
For any $G'\in\mathcal{MF}$,
the quotient $\ext_{x}(G')/\ext_x(F)$ is bounded on $\hat{N}\cap \Bers{x_0}$.
\end{claim}

\begin{proof}[Proof of Claim \ref{claim:1-1}]
Take a sequence $\{x_n\}_{n}$ in $\hat{N}\cap \Bers{x_0}$
such that
$$
\ext_{x_n}(G')/\ext_{x_n}(F)\to \sup _{x\in \hat{N}\cap \Bers{x_0}}\left(\ext_{x}(G')/\ext_{x}(F)\right).
$$
For our purpose, we may assume that $\{x_n\}_n$ is a divergent sequence in $\Bers{x_0}$.
Let $G_{x_n}\in \mathcal{SMF}_{x_0}$ be the vertical foliation
of the quadratic differential (of unit norm) associated to the Teichm\"uller geodesic connecting
from $x_0$ to $x_n$.
By taking a subsequence,
we may assume that $\{x_n\}_n$ converges to $\varphi_0$ in 
the closure of $\hat{N}\cap \partial \Bers{x_0}$
and
to $[H_\infty]\in \mathcal{PMF}$ in the Thurston compactification,
and $\{G_{x_n}\}_n$ converges to $G_\infty\in \mathcal{SMF}_{x_0}$.
From \cite[Proposition 5.1]{MR2449148},
$i(G_\infty,H_\infty)=0$.

We claim  $i(F,G_\infty)\ne 0$.
Otherwise, $L(F)=L(G_\infty)=L(H_\infty)$ since $F$ is essentially complete
(cf. \cite{MR662738}).
On the other hand,
from \cite[Theorem 5.2]{MR3330544},
the limit $H_\infty$ is disjoint from the parabolic loci of the Kleinian manifold asssociated to $\varphi_0$
and satisfies $i(H_\infty,H')=0$ for any $H'\in \mathcal{MF}$ such that $L(H')$
coincides with the ending lamination of a geometrically infinite end of the Kleinian manifold
associated to $\varphi_0$. In particular, $L(H_\infty)$ is not realizable in the Kleinian manifold associated to $\varphi_0$. This contradicts the realizability of $F$.

Since $i(F,G_\infty)\ne 0$,
there is a constant $\epsilon_0>0$ such that $i(F,G_{x_n})\ge \epsilon_0$ for sufficiently large $n$.
Hence,
$$
\epsilon_{0}^2\le i(F,G_{x_n})^2\le \ext_{x_n}(F)\ext_{x_n}(G_{x_n})
=e^{-2d_T(x_0,x_n)}\ext_{x_n}(F)
$$
and
\begin{align*}
\frac{\ext_{x_n}(G')}{\ext_{x_n}(F)}
&\le
\frac{\ext_{x_n}(G')}{\epsilon_0^2e^{2d_T(x_0,x_n)}}
\le \frac{\ext_{x_0}(G')}{\epsilon_0^2} 
\end{align*}
for sufficiently large $n$.
\end{proof}

Let us complete the proof of Proposition \ref{prop:F-G-N}. Take $G'\in \mathcal{MF}$ which is transverse to $F$.
Let $M=\sup_x \ext_x(G')/\ext_x(F)$ for $x\in \hat{N}\cap \Bers{x_0}$ and set $G=G'/M^{1/2}$.
Then,
$F$ and $G$ are transverse and satisfy
$\extrecip_{F,G}(x)=\extrecip_F(x)$
for
$x\in \hat{N}\cap \Bers{x_0}$.
\end{proof}

\section{Pluriharmonic measure and Thurston measure}
\label{sec:pluriharmonic-measure-Thurston-measure}
%
For $x\in \teich_{g,m}$,
we denote by $\harmonicmeasure^{x_0}_x$
the pluriharmonic measure of $x$
on $\Bers{x_0}$
(cf. \S\ref{subsec:result-from-pluripotential-theory}).
The superscript ``$x_0$" of $\harmonicmeasure^{x_0}_x$
indicates the base point of the Bers slice $\Bers{x_0}$.
Since $\partial \Bers{x_0}$ is a compact metrizable space,
$\harmonicmeasure^{x_0}_x$ and the pushforward measure
$\pushThursMBers_x$ defined in \eqref{eq:defintion-pushforward-Th}
are inner and outer regular
(cf. \cite[Theorem 1.1]{MR1700749}).

The aim of this section is to prove the following.

\begin{theorem}[PH measure and Thurston measure]
\label{thm:support-PH-measure}
For any $x\in \teich_{g,m}$,
the pluriharmonic measure $\harmonicmeasure^{x_0}_x$
is absolutely continuous 
with respect to 
$\pushThursMBers_x$
on $\partial\Bers{x_0}$.
\end{theorem}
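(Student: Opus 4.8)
The plan is to compare the pluriharmonic measure with the Thurston pushforward \emph{locally} near each boundary point, using Demailly's comparison principle (Proposition \ref{prop:Demally1}) against the maximal exhaustions $\extrecip_{F,G}$, and then to match the boundary measure of $\extrecip_{F,G}$ with $\pushThursMBers_x$ through the horosphere measures. Since $\harmonicmeasure^{x_0}_x$ is by definition the boundary measure of the normalized pluricomplex Green function $u_{\teich_{g,m},x}=(2\pi)^{-\convgenus}g_{\teich_{g,m}}(x,\cdot)$, and since both $\harmonicmeasure^{x_0}_x$ and $\pushThursMBers_x$ are regular Borel measures on the compact space $\partial\Bers{x_0}$, it suffices to prove that every $\varphi\in\partial\Bers{x_0}$ has a neighborhood $N_0$ in $\partial\Bers{x_0}$ on which $\harmonicmeasure^{x_0}_x\ll\pushThursMBers_x$, and then to cover $\partial\Bers{x_0}$ by finitely many such neighborhoods.

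So fix $\varphi$ and take $F,G\in\mathcal{MF}$ together with the neighborhood $\hat N$ furnished by Proposition \ref{prop:F-G-N}, so that $F$ is essentially complete, $F$ and $G$ are transverse, and $\extrecip_{F,G}=\extrecip_F$ on $\hat N\cap\Bers{x_0}$. First I would apply Demailly's Th\'eor\`eme 3.8 (Proposition \ref{prop:Demally1}) with $u=u_{\teich_{g,m},x}$ and $v=\extrecip_{F,G}$. The required hypotheses hold: $u_{\teich_{g,m},x}$ is a continuous negative PSH-exhaustion whose Monge--Amp\`ere measure is a point mass at $x$, while $\extrecip_{F,G}$ is a continuous negative PSH-exhaustion by Proposition \ref{prop:exhausion-F-G} with $\int_{\teich_{g,m}}(dd^c\extrecip_{F,G})^{\convgenus}<\infty$ by Proposition \ref{prop:ddcuFG-volume-finite}. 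The Krushkal comparison \eqref{eq:comparison-green-dT}, together with $e^{-2d_T(x,\cdot)}\asymp e^{-2d_T(x_0,\cdot)}$, shows that $u_{\teich_{g,m},x}/\extrecip_{F,G}$ stays bounded as one approaches $N_0:=\hat N\cap\partial\Bers{x_0}$, so the relevant $\limsup$ is a finite function $\lambda$ there. Demailly's theorem then yields $d\harmonicmeasure^{x_0}_x\le\lambda^{\convgenus}\,d\mu_{\extrecip_{F,G}}$ on $N_0$, reducing the claim to $\mu_{\extrecip_{F,G}}\ll\pushThursMBers_x$ on $N_0$.

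The remaining and, I expect, decisive step is to identify the boundary measure $\mu_{\extrecip_{F,G}}$ on $N_0$ with the pushforward of the Thurston measure. On $\hat N$ we have $\extrecip_{F,G}=\extrecip_F$, whose level sets are exactly the horospheres ${\rm HS}(F;R)$ with $R=|r|^{-1/2}$; because $\extrecip_F$ satisfies the homogeneous Monge--Amp\`ere equation \eqref{eq:homogeneous-MA}, the interior volume term in the Lelong--Jensen formula vanishes there, so the Lelong--Jensen measures $\mu_{\extrecip_{F,G},r}$ restricted to $\hat N$ coincide with the horosphere measures $\HSmeas_{F,R}$ of \eqref{eq:measure-on-horosphere}. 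Passing to the weak limit $r\to 0^-$ (equivalently $R\to\infty$) and transporting along ${\bf T}_{R,x}$ and the boundary map $\Xi_{x_0}\circ\proje_x$---whose compatibility with the Teichm\"uller-geodesic flow to the boundary is provided by Proposition \ref{prop:Teichmuller-limit}---Proposition \ref{prop:comparison} gives $\HSmeas_{F,R}\asymp\PThursM^x$ uniformly in $R$ on the relatively compact subset of $\mathcal{SMF}_x$ corresponding to $N_0$. Since $\pushThursMBers_x=(\Xi_{x_0}\circ\proje_x)_*\PThursM^x$ by \eqref{eq:defintion-pushforward-Th}, this gives $\mu_{\extrecip_{F,G}}\asymp\pushThursMBers_x$, in particular $\mu_{\extrecip_{F,G}}\ll\pushThursMBers_x$, on $N_0$, completing the local estimate and hence the theorem.

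The hard part will be making the limiting identification in the last paragraph rigorous: one must check that the weak-$*$ limit defining the boundary measure $\mu_{\extrecip_{F,G}}$, when restricted to $N_0$, is genuinely the image under $\Xi_{x_0}\circ\proje_x$ of the limit of the rescaled horosphere measures, and that the parametrizations ${\bf T}_{R,x}$ converge compatibly to the boundary correspondence. This is where Masur's full-measure statement for $\mathcal{SMF}^{ue}_x$ and Proposition \ref{prop:Teichmuller-limit} are needed, ensuring that the relevant maps are defined and behave continuously $\pushThursMBers_x$-almost everywhere on $\partial^{ue}\Bers{x_0}$.
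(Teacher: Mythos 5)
Your outline reproduces the paper's strategy almost verbatim (local comparison via Proposition \ref{prop:F-G-N}, Demailly's Proposition \ref{prop:Demally1} together with \eqref{eq:comparison-green-dT} to trade $\harmonicmeasure^{x_0}_x$ for the boundary measure of $\extrecip_{F,G}$, then horosphere measures, Proposition \ref{prop:comparison} and Teichm\"uller rays to reach $\pushThursMBers_x$), but it has a genuine gap that is not merely the technical ``hard part'' you flagged: you never show that $\harmonicmeasure^{x_0}_x$ gives zero mass to the cusp locus $\partial^{cusp}\Bers{x_0}$. Since $\pushThursMBers_x=(\Xi_{x_0}\circ\proje_x)_*\PThursM^x$ is by construction supported on $\partial^{mf}\Bers{x_0}$, the set $\partial^{cusp}\Bers{x_0}$ is $\pushThursMBers_x$-null, so absolute continuity \emph{requires} $\harmonicmeasure^{x_0}_x(\partial^{cusp}\Bers{x_0})=0$ --- and nothing in your argument delivers this. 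The horosphere/weak-limit identification only sees the part of the boundary reached by rays $R_G$ with $G\in\mathcal{SMF}^{mf}_x$: by the portmanteau theorem, weak convergence of the Lelong--Jensen measures yields at best a one-sided bound, $\mu_{\extrecip_{F,G}}(\mathcal{V})\le\liminf_r\mu_{\extrecip_{F,G},r}(\mathcal{V})$ for ambient open sets $\mathcal{V}$ whose boundary trace lies in $\partial^{mf}\Bers{x_0}$; it cannot rule out that in the limit a definite amount of mass escapes to the cusps, which lie entirely outside the image of the ray parametrization. For the same reason your claimed two-sided comparison $\mu_{\extrecip_{F,G}}\asymp\pushThursMBers_x$ on $N_0$ overstates what this route can give; the paper proves, and only needs, the one-sided estimate $\mu_i(U^{mf}\cap\hat N_i)\lesssim\pushThursMBers_x(U\cap\hat N_i)$ (equality of the two measures comes only later, in Theorem \ref{thm:PHmeasure-is-Thurston-measure}, via ergodicity of the $\mcg_{g,m}$-action).

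The paper closes this hole with a separate and indispensable argument (Proposition \ref{prop:Bers-APT}): for each $\gamma\in\pi_1(\Sigma_{g,m})$ the function $F_\gamma=H({\rm tr}^2\rho_{\,\cdot\,}(\gamma))$ is holomorphic on $\Bers{x_0}$, continuous on $\overline{\Bers{x_0}}$, takes values in $\mathbb{D}\cup\{1\}$, and equals $1$ exactly on the set $\APT^{x_0}_\gamma$ of boundary groups admitting $\gamma$ as an accidental parabolic; applying Demailly's Poisson formula to the powers $(F_\gamma)^n$ and dominated convergence gives $\harmonicmeasure^{x_0}_x(\APT^{x_0}_\gamma)=0$, and $\partial^{cusp}\Bers{x_0}$ is the countable union of these sets. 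With that in hand --- plus the rigorization of your limiting step, which the paper carries out through the entrance-time function $\tau_i$ of the rays into $\hat N_i$, its upper semicontinuity, and the construction of the ambient open sets $\mathcal{V}_s$ satisfying $\mathcal{V}_s\cap\partial^{mf}\Bers{x_0}=U^{mf}_s$ and $\mathcal{V}_s\cap\mathcal{R}=\mathcal{R}_s$ --- your outline becomes the paper's proof. Without it, the proof is incomplete at a point that no amount of care with the weak limit will repair.
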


\subsection{Cusps are negligible}
We first check the following.

\begin{proposition}[Cusps are negligible]
\label{prop:Bers-APT}
$\harmonicmeasure^{x_0}_x
(\partial^{cusp} \Bers{x_0})=0$.
Namely,
the pluriharmonic measure $\harmonicmeasure^{x_0}_x$ is supported on 
$\partial^{mf}\Bers{x_0}$.
\end{proposition}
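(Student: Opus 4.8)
The plan is to localize the problem near the cusp locus and reduce it to a mass estimate for the boundary measure of the exhaustion $\extrecip_{F,G}$, which I would then control by the Thurston measure through the horosphere measures $\HSmeas_{F,R}$. First I would fix a cusp $\varphi_0\in\partial^{cusp}\Bers{x_0}$ and apply Proposition~\ref{prop:F-G-N} to obtain an essentially complete $F$, a transverse $G$, and a neighbourhood $\hat N$ of $\varphi_0$ with $\extrecip_{F,G}=\extrecip_F$ on $\hat N\cap\Bers{x_0}$; since $\partial\Bers{x_0}$ is second countable, countably many such neighbourhoods cover $\partial^{cusp}\Bers{x_0}$, so it suffices to prove $\harmonicmeasure^{x_0}_x(\hat N\cap\partial^{cusp}\Bers{x_0})=0$ for each. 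By the Krushkal formula \eqref{eq:Green-function} together with $|d_T(x,\cdot)-d_T(x_0,\cdot)|\le d_T(x,x_0)$, both the normalized Green function $u_{\Bers{x_0},x}=(2\pi)^{-\convgenus}g_{\Bers{x_0}}(x,\cdot)$ and $\extrecip_{F,G}$ are comparable to $-e^{-2d_T(x_0,\cdot)}$ near $\partial\Bers{x_0}$ (cf. \eqref{eq:comparison-green-dT}), so their ratio stays bounded on $\hat N\cap\Bers{x_0}$. Both are continuous PSH exhaustions of finite Monge--Amp\`ere mass (Proposition~\ref{prop:ddcuFG-volume-finite}, and $(dd^cu_{\Bers{x_0},x})^{\convgenus}$ is a point mass at $x$), so Demailly's comparison Proposition~\ref{prop:Demally1} yields $\harmonicmeasure^{x_0}_x\le C\,\mu_{\extrecip_{F,G}}$ as Borel measures on $\hat N\cap\partial\Bers{x_0}$. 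It therefore suffices to bound $\mu_{\extrecip_{F,G}}(\hat N\cap\partial^{cusp}\Bers{x_0})$.

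Next I would describe $\mu_{\extrecip_{F,G}}$ over $\hat N$. Because $\extrecip_{F,G}=\extrecip_F$ there and $\extrecip_F$ solves the homogeneous Monge--Amp\`ere equation \eqref{eq:homogeneous-MA}, the interior mass $(dd^c\extrecip_{F,G})^{\convgenus}$ vanishes on $\hat N\cap\Bers{x_0}$, and the Lelong--Jensen measures of $\extrecip_{F,G}$ on the level sets $\{\extrecip_{F,G}=r\}$ restrict, over $\hat N$, to the horosphere measures $d\HSmeas_{F,R}=(dd^c\extrecip_F)^{\convgenus-1}\wedge d^c\extrecip_F$ of \eqref{eq:measure-on-horosphere} with $R^2=-1/r$. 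Letting $r\to 0^-$ identifies $\mu_{\extrecip_{F,G}}$ on $\hat N$ with the weak limit of these $\HSmeas_{F,R}$. By Proposition~\ref{prop:F-G-N}(2) the foliations whose boundary groups lie in $\hat N$ form a set $U$ relatively compact in $\mathcal{SMF}^F_x$, so Proposition~\ref{prop:comparison} gives $\HSmeas_{F,R}({\bf T}_{R,x}(E))\asymp\PThursM^x(E)$ for Borel $E\subset U$, with constants uniform in $R$.

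Finally I would pass to the limit. Every boundary group $\Xi_{x_0}(\proje_x(G))$ lies in $\partial^{mf}\Bers{x_0}$, so a cusp is never of this form, and $(\Xi_{x_0}\circ\proje_x)^{-1}(\partial^{cusp}\Bers{x_0})$ is $\PThursM^x$-null; moreover the horosphere points ${\bf T}_{R,x}(G)$ converge, as $R\to\infty$, to $\Xi_{x_0}(\proje_x(G))$ for minimal filling $G$ (the Teichm\"uller-ray convergence of Proposition~\ref{prop:Teichmuller-limit}), uniformly on the compact set $\overline U$. Hence for every compact $K\subset\hat N\cap\partial^{cusp}\Bers{x_0}$ and every open $O\supset K$, the set $\{G\in U:{\bf T}_{R,x}(G)\in O\}$ is, for large $R$, contained in a neighbourhood of $(\Xi_{x_0}\circ\proje_x)^{-1}(\partial^{cusp}\Bers{x_0})$ whose $\PThursM^x$-mass shrinks to $0$ as $O\downarrow K$; combining the Portmanteau inequality $\mu_{\extrecip_{F,G}}(O)\le\liminf_R\HSmeas_{F,R}(O)$ with the uniform comparison and inner regularity of $\mu_{\extrecip_{F,G}}$ gives $\mu_{\extrecip_{F,G}}(\hat N\cap\partial^{cusp}\Bers{x_0})=0$, and the domination from the first paragraph finishes the proof. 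The hard part will be the second and third steps: justifying rigorously that Demailly's level-set measures of $\extrecip_{F,G}$ agree over $\hat N$ with the horosphere measures $\HSmeas_{F,R}$, and that the maps ${\bf T}_{R,x}$ converge uniformly on $\overline U$ to the Bers boundary map $\Xi_{x_0}\circ\proje_x$, so that the uniform comparison of Proposition~\ref{prop:comparison} survives the weak limit with no mass escaping onto the cusp set.
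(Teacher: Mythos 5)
Your proposal is not the paper's argument, and as written it has a genuine gap at its crux. The paper disposes of the cusp set by a soft, purely complex-analytic trick: for each $\gamma\in\pi_1(\Sigma_{g,m})$ the set $\APT^{x_0}_{\gamma}$ of boundary groups with $\rho_\varphi(\gamma)$ parabolic is exactly the peak set $\{F_\gamma=1\}$ of the function $F_\gamma(\varphi)=H({\rm tr}^2\rho_\varphi(\gamma))$, where $H$ maps $\hat{\mathbb{C}}-[0,4]$ conformally onto $\mathbb{D}$; $F_\gamma$ is holomorphic on $\Bers{x_0}$, continuous on $\overline{\Bers{x_0}}$, with values in $\mathbb{D}\cup\{1\}$. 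Applying Demailly's integral representation \eqref{eq:integral-formula} to the powers $(F_\gamma)^n$ and letting $n\to\infty$ (dominated convergence) gives $\harmonicmeasure^{x_0}_x(\APT^{x_0}_{\gamma})=0$, and the countable union over $\gamma$ finishes. No Thurston measure, horospheres, or Teichm\"uller rays enter at all. This order of logic matters: the paper's proof of the local comparison (Proposition \ref{prop:local-comparison}), which your outline parallels, itself invokes Proposition \ref{prop:Bers-APT} at its last step to replace $U$ by $U^{mf}$ --- i.e.\ the ray-and-horosphere machinery is only ever used to control mass on $\partial^{mf}\Bers{x_0}$, and the cusp set is removed beforehand by the independent trace-function argument.

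The gap in your route is the uniformity claim you yourself flag as the hard part: that ${\bf T}_{R,x}$ converges, uniformly on $\overline{U}$, to $\Xi_{x_0}\circ\proje_x$ as $R\to\infty$. This is not merely unproved, it is false as stated. The closure $\overline{U}$ in $\mathcal{SMF}_x$ of any nonempty open subset of $\mathcal{SMF}^{mf}_x$ contains foliations that are not minimal and filling (weighted multicurves are dense in $\mathcal{MF}$); on these $\Xi_{x_0}\circ\proje_x$ is not even defined, and by Masur's theorem (cited after Proposition \ref{prop:Teichmuller-limit}) the corresponding Teichm\"uller rays converge precisely to \emph{cusps}. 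So no continuous extension of $\Xi_{x_0}\circ\proje_x$ to $\overline{U}$ exists, and uniform convergence there is impossible; on $\mathcal{SMF}^{mf}_x$ one only has the pointwise convergence of Proposition \ref{prop:Teichmuller-limit}. Relatedly, your containment of $\{G\in U:{\bf T}_{R,x}(G)\in O\}$ in ``a neighbourhood of $(\Xi_{x_0}\circ\proje_x)^{-1}(\partial^{cusp}\Bers{x_0})$'' is vacuous, since that preimage is empty --- yet the set in question is genuinely nonempty: foliations close to the multicurves pinched at a cusp in $K$ have ${\bf T}_{R,x}(G)$ near $K$ for large $R$, and this is exactly the mass you must show is small. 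What the argument actually requires is that for each compact $K\subset\partial^{cusp}\Bers{x_0}$ one has $\inf_{O\supset K}\liminf_{R\to\infty}\PThursM^{x}\left(\{G:{\bf T}_{R,x}(G)\in O\}\right)=0$; this could plausibly be repaired by an Egorov-type argument using only almost-everywhere ray convergence (via the full measure of $\mathcal{SMF}^{ue}_x$), but no such argument appears in your proposal, and without it the weak limit can, for all you have shown, deposit positive mass on the cusp set. The remaining steps (countable cover via Proposition \ref{prop:F-G-N}, two-sided comparison of $\harmonicmeasure^{x_0}_x$ with the boundary measure of $\extrecip_{F,G}$ via \eqref{eq:comparison-green-dT} and Proposition \ref{prop:Demally1}, identification of the level-set measures with $\HSmeas_{F,R}$ and the $R$-uniform comparison of Proposition \ref{prop:comparison}) are sound and match the paper's own treatment of the $\partial^{mf}$ part.
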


\begin{proof}
Let $\gamma\in \pi_1(\Sigma_{g,m})$.
Let $\APT^{x_{0}}_{\gamma}\subset
\partial \Bers{x_0}$
be the boundary groups which admit $\gamma$ as an APT
(possibly, 
$\APT^{x_{0}}_{\gamma}=\emptyset$ for some $\gamma\in \pi_1(\Sigma_{g,m})$).
Since
$\partial^{cusp} \Bers{x_0}=
\cup_{\gamma\in \pi_1(\Sigma_{g,m})}
\APT^{x_{0}}_{\gamma}$,
it suffices to show that 
$\harmonicmeasure^{x_0}_x
(\APT^{x_{0}}_{\gamma})=0$
for each $\gamma\in \pi_1(\Sigma_{g,m})$.

Suppose $\APT^{x_{0}}_{\gamma}\ne \emptyset$.
Consider
a holomorphic function on $\hat{\mathbb{C}}-[0,4]$ defined by
$$
H(w)=\frac{w-2-\sqrt{w^{2}-4w}}{2}
$$
and $H(\infty)=0$.
$H$ maps $\hat{\mathbb{C}}-[0,4]$
conformally onto $\mathbb{D}$
and is continuous on  $\hat{\mathbb{C}}-[0,4)$
with $H(4)=1$.
Set
$$
F_{\gamma}(\varphi)=H({\rm tr}^{2}\rho_{\varphi}(\gamma))
$$
for $\varphi\in \overline{\Bers{x_{0}}}$.
Since every monodromy $\rho_{\varphi}$
is faithful and discrete for $\varphi\in \overline{\Bers{x_{0}}}$,
${\rm tr}^{2}\rho_{\varphi}(\gamma)\in \mathbb{C}-[0,4)$
for all $\varphi\in \overline{\Bers{x_{0}}}$.
Therefore,
$F_{\gamma}$ is holomorphic on $\Bers{x_{0}}$
and continuous on the Bers closure $\overline{\Bers{x_{0}}}$
such that
$F_{\gamma}(\Bers{x_{0}})\subset\mathbb{D}$
and
$F_{\gamma}(\overline{\Bers{x_{0}}})\subset\mathbb{D}\cup\{1\}$.
Furthermore,
for $\varphi\in \overline{\Bers{x_{0}}}$,
$\varphi\in \APT^{x_{0}}_{\gamma}$
if and only if $F_{\gamma}(\varphi)=1$.

By Demailly's Poisson integral formula in \cite[Th\'eor\`eme 5.1]{MR881709},
for all $n\in \mathbb{N}$,
the $n$-th power
$(F_{\gamma})^{n}$
of $F_{\gamma}$ is represented by
\begin{equation}
\label{eq:integral-APT}
(F_{\gamma})^{n}(x)=\int_{\partial\Bers{x_0}}
(F_{\gamma})^{n}(\varphi)\harmonicmeasure^{x_0}_x(\varphi)
\quad (x\in \Bers{x_0}).
\end{equation}
The $n$-th power $(F_{\gamma})^{n}$ converges pointwise
to
the characteristic function $\chi_{\APT^{x_{0}}_{\gamma}}$
of $\APT^{x_{0}}_{\gamma}$
on $\overline{\Bers{x_{0}}}$
as $n\to \infty$.
Since $\harmonicmeasure^{x_0}_x(\partial\Bers{x_0})=1$
and all $(F_{\gamma})^{n}$ is uniformly bounded on the Bers closure,
by Lebesgue's dominated convergence theorem,
from \eqref{eq:integral-APT}
$$
0=\int_{\partial\Bers{x_0}}
\chi_{\APT^{x_{0}}_{\gamma}}(\varphi)
d\harmonicmeasure^{x_0}_x(\varphi)
=\harmonicmeasure^{x_0}_x(\APT^{x_{0}}_{\gamma}),
$$
and we are done.
\end{proof}

\subsection{Local comparison and Proof of Theorem \ref{thm:support-PH-measure}}
\label{subsec:A-covering-partial-Bers}
Let $x\in \teich_{g,m}$.
%
For $H\in \mathcal{SMF}^{mf}_{x}\cong \mathcal{PMF}^{mf}$,
$\varphi_H=\varphi_{H,x_0}\in \partial\Bers{x_0}$
be the totally degenerate group whose ending lamination is equal to $L(H)$.
By Proposition \ref{prop:F-G-N},
there are $F_H$,
$G_H\in \mathcal{MF}$
and a neighborhood $\hat{N}_H$ of $\varphi_H$
such that
$F_H$ is essentially complete,
$F_H$ and $G_H$ are transverse and satisfy
$$
\extrecip_{F_H,G_H}=\extrecip_{F_H}
$$
on $\hat{N}_H\cap \Bers{x}$.
Since $\partial\Bers{x_0}$ is compact,
we can choose a finite system $\{\hat{N}_{H_i}\}_{i}$
which covers $\partial\Bers{x_0}$.
%
For the simplicity,
set 
$F_i=F_{H_i}$,
$G_i=G_{H_i}$,
and $\hat{N}_i=\hat{N}_{H_i}$.

Theorem \ref{thm:support-PH-measure} follows from the following proposition.

\begin{proposition}[Local comparison]
\label{prop:local-comparison}
For each $i$,
the pluriharmonic measure
$\harmonicmeasure^{x_0}_{x}$
is absolutely continuous with respect to
$\pushThursMBers_x$
on $\partial\Bers{x_0}\cap \hat{N}_i$.
\end{proposition}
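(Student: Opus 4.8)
The plan is to use the reciprocal extremal length function $\extrecip_{F_i,G_i}$ as a mediator between Demailly's pluriharmonic measure and the Thurston measure carried by the horospheres for $F_i$. Write $u=(2\pi)^{-\convgenus}g_{\teich_{g,m}}(x,\cdot)$ on $\Bers{x_0}\cong\teich_{g,m}$, so that $\harmonicmeasure^{x_0}_x$ is the boundary measure $\mu_u$, and set $v=\extrecip_{F_i,G_i}$, which is a continuous negative PSH-exhaustion of finite Monge-Amp\`ere mass by Propositions~\ref{prop:exhausion-F-G} and~\ref{prop:ddcuFG-volume-finite}. I would reduce the assertion to the two absolute continuities $\harmonicmeasure^{x_0}_x\ll\mu_v$ and $\mu_v\ll\pushThursMBers_x$ on $\hat N_i$.

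For the first, by Krushkal's formula \eqref{eq:Green-function} together with the estimate $\log\tanh t\sim -2e^{-2t}$, the function $u$ is comparable to $-e^{-2d_T(x_0,\cdot)}$ near $\partial\Bers{x_0}$ (the change of basepoint contributing only a bounded factor to $d_T$), and \eqref{eq:comparison-green-dT} makes the latter comparable to $v$. Thus $\limsup_{y\to z}u(y)/v(y)<\infty$ for every $z\in\hat N_i\cap\partial\Bers{x_0}$, and Demailly's Proposition~\ref{prop:Demally1} yields $d\harmonicmeasure^{x_0}_x\le C\,d\mu_v$ on $\hat N_i$, hence $\harmonicmeasure^{x_0}_x\ll\mu_v$ there.

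For the second, Proposition~\ref{prop:F-G-N}(3) gives $v=\extrecip_{F_i}$ on $\hat N_i\cap\Bers{x_0}$, and the homogeneous equation \eqref{eq:homogeneous-MA} shows that the interior term $(dd^cv)^{\convgenus}$ does not charge this region; consequently the Lelong--Jensen level-set measure of $v$ over $\hat N_i$ coincides, for $R^2=1/|r|$, with the horosphere measure $\HSmeas_{F_i,R}$ of \eqref{eq:measure-on-horosphere}, and the restriction of $\mu_v$ to $\hat N_i$ is the weak limit of $\HSmeas_{F_i,R}$ as $R\to\infty$. Proposition~\ref{prop:comparison} bounds $\HSmeas_{F_i,R}({\bf T}_{R,x}(E))$ by a constant multiple of $\PThursM^x(E)$ uniformly in $R$, while ${\bf T}_{R,x}(G)\to\Xi_{x_0}(\proje_x(G))$ as $R\to\infty$ by Proposition~\ref{prop:Teichmuller-limit}, and $\pushThursMBers_x=(\Xi_{x_0}\circ\proje_x)_*\PThursM^x$ by \eqref{eq:defintion-pushforward-Th}. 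Given a Borel set $E\subset\hat N_i\cap\partial\Bers{x_0}$ with $\pushThursMBers_x(E)=0$, outer regularity encloses $E$ in open sets of vanishing $\pushThursMBers_x$-measure; the uniform comparison then controls the associated horospherical masses independently of $R$, and weak convergence forces $\mu_v(E)=0$. Composing with the first step gives $\harmonicmeasure^{x_0}_x\ll\pushThursMBers_x$ on $\partial\Bers{x_0}\cap\hat N_i$.

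I expect the main obstacle to be the passage in the preceding paragraph: making precise that Demailly's boundary measure of $v$ near $\hat N_i$ is literally the weak limit of the horosphere measures $\HSmeas_{F_i,R}$, and then upgrading the $R$-uniform comparison with $\PThursM^x$ to an absolute-continuity statement on the boundary. This is where the interplay among the Lelong--Jensen formula \eqref{eq:Demailly-LJ-formula}, the boundary convergence of the maps ${\bf T}_{R,x}$, and the inner/outer regularity of $\harmonicmeasure^{x_0}_x$ and $\pushThursMBers_x$ must be handled with care.
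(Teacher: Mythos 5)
Your overall strategy coincides with the paper's: use $\extrecip_{F_i,G_i}$ as a mediator, compare its Demailly boundary measure with $\harmonicmeasure^{x_0}_x$ on one side, and compare its Lelong--Jensen level-set measures (which on $\hat N_i$ coincide with the horosphere measures $\HSmeas_{F_i,R}$, by Proposition \ref{prop:F-G-N}(3) and \eqref{eq:homogeneous-MA}) with the Thurston measure on the other side. The first half of your argument is correct and is exactly how the paper concludes, via Proposition \ref{prop:Demally1} and \eqref{eq:comparison-green-dT}. The genuine gap is in the second half, precisely at the step you yourself flag as the ``main obstacle'': the assertion that ``the uniform comparison then controls the associated horospherical masses independently of $R$.'' Proposition \ref{prop:comparison} compares $\HSmeas_{F_i,R}({\bf T}_{R,x}(E))$ with $\PThursM^{x}(E)$ for a \emph{fixed} set $E$ of foliations lying in a fixed relatively compact subset of $\mathcal{SMF}^{F_i}_x$. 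When you apply it to an ambient open set $O$ enclosing your null set, the relevant sets are $A_R={\bf T}_{R,x}^{-1}(O\cap{\rm HS}(F_i,R))$, which depend on $R$: they consist of all foliations whose Teichm\"uller ray happens to pass through $O$ at the time corresponding to $R$, including foliations whose rays later exit $O$ and converge to boundary points far from $O\cap\partial\Bers{x_0}$. Pointwise convergence ${\bf T}_{R,x}(G)\to\varphi_G$ (Proposition \ref{prop:Teichmuller-limit}) does not by itself yield $\PThursM^{x}(A_R)\lesssim\pushThursMBers_x(\,\cdot\,)$ of a controlled enlargement of $O\cap\partial\Bers{x_0}$ uniformly in $R$; one would need a reverse-Fatou argument together with an equi-convergence statement for the rays, and in addition one must show that all the sets $A_R$ stay inside one relatively compact subset of $\mathcal{SMF}^{F_i}_x$, since otherwise the constants in Proposition \ref{prop:comparison} degenerate. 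Proposition \ref{prop:F-G-N}(2) provides such compactness only for preimages of boundary sets under $\Xi_{x_0}\circ\proje_x$, not for these ray preimages, so this too requires a separate argument (essentially a rerun of Claim \ref{claim:1-1}).

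The paper closes exactly this gap by reversing the construction: instead of starting from an arbitrary ambient open set and trying to control its $R$-dependent preimages, it introduces the entrance-time function $\tau_i$ of the Teichm\"uller rays into $\hat N_i$, proves its upper semicontinuity (Claim \ref{claim:3-3}), and uses this together with Claim \ref{claim:3-4} to manufacture ambient open sets $\mathcal{V}_s$ whose intersection with the ray region $\mathcal{R}$ is exactly $\mathcal{R}_s$ and whose intersection with $\partial^{mf}\Bers{x_0}$ is exactly $U^{mf}_s$. Consequently $\mathcal{V}_s\cap{\rm HS}(F_i,e^{2u})$ equals ${\bf T}_{e^{2u},x}(\mathcal{U}^{mf}_s)$ up to a $\HSmeas$-null set, where $\mathcal{U}^{mf}_s=(\Xi_{x_0}\circ\proje_x)^{-1}(U^{mf}_s)$ is a \emph{fixed}, $u$-independent boundary preimage contained (by Proposition \ref{prop:F-G-N}(2)) in a fixed relatively compact subset of $\mathcal{SMF}^{F_i}_x$; the comparison of Proposition \ref{prop:comparison} is then trivially uniform in $u$, the portmanteau inequality for the open sets $\mathcal{V}_s$ bounds $\mu_i(\mathcal{V}_s)$ by $\pushThursMBers_x(U\cap\hat N_i)$, and a monotone limit in $s$ finishes the proof. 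Your proposal contains the correct skeleton and all the right ingredients, but without this ray-adapted construction (or an equivalent equi-convergence/compactness argument), the weak-convergence step at the heart of the proof does not go through.
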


\begin{proof}
Since $\harmonicmeasure^{x_0}_{x}$ and $\pushThursMBers_x$
are outer regular on $\partial\Bers{x_0}$,
it suffices to show that
\begin{equation*}
\harmonicmeasure^{x_0}_{x}(U\cap \hat{N}_i)
\lesssim \pushThursMBers_x(U\cap \hat{N}_i)
\end{equation*}
for each relative open set $U\subset \partial\Bers{x_0}$,
where the constant for the comparison is independent of $U$.

For $G\in \mathcal{SMF}_x$ which is transverse to $F_i$,
let $R_{G}\colon \mathbb{R}\to \teich_{g,m}\cong \Bers{x_0}$
be the Teichm\"uller ray defined by $-q_{F_i,{\bf T}_{1,x}(G)}$
emanating from ${\bf T}_{1,x}(G)\in {\rm HS}(F_i,1)\subset \mathbb{H}_{F_i}\cong \teich_{g,m}$,
where ${\bf T}_{1, x}\colon \mathcal{SMF}^{F_i}_{x}\to \mathbb{H}_{F_i}$ is defined for $F_i$ and $R=1$ as \eqref{eq:TRHS}.
Let $G^{F_i}=h(q_{F_i,{\bf T}_{1,x}(G)})$ for the simplicity.
Then,
$G^{F_i}$ is projectively equivalent to $G$,
and satisfies $\ext_{{\bf T}_{1,x}(G)}(G^{F_i})=\ext_{{\bf T}_{1,x}(G)}(F_i)=i(G^{F_i},F_i)=1$.

We define a function
$\tau_i\colon \partial^{mf}\Bers{x_0}\cap\hat{N}_i\to \mathbb{R}$
by
$$
\tau_i(\varphi_G)=\inf\{\tau>0\mid 
\mbox{$R_{G'}(t)\in \hat{N}_i$ for $t\ge \tau$,
$G'\in \mathcal{SMF}_x^{mf}$ with $L(G')=L(G)$}\}.
$$
From Proposition \ref{prop:Teichmuller-limit},
$\tau_i(\varphi_G)<\infty$ for any $\varphi_G\in \partial^{mf}\Bers{x_0}\cap\hat{N}_i$.
By the definition of $\tau_i$,
$R_{G'}(t)\in \hat{N}_i$ for all $t>\tau_i(\varphi_G)$ and $G'\in \mathcal{SMF}_x^{mf}$ with $L(G')=L(G)$.

We claim
\begin{claim}
\label{claim:3-3}
$\tau_i$ is upper semicontinuous.
\end{claim}

\begin{proof}[Proof of Claim \ref{claim:3-3}]
Suppose to the contrary that $\tau_i$ is not upper semicontinuous
at $\varphi_G\in \hat{N}_i\cap \partial^{mf}\Bers{x_0}$.
There are $\epsilon>0$
and $\{H_n\}_n\subset \mathcal{SMF}_x^{mf}$
($\cong \mathcal{PMF}^{mf}$)
such that $\tau(\varphi_{H_n})\ge \tau(\varphi_G)+\epsilon$
and $\varphi_{H_n}\to \varphi_G$.
Since the Hausdorff limit (in the space of geodesic laminations)
of any subsequences of $\{L(H_n)\}_n$ 
%
contains $L(G)$,
any accumulation point of $\{H_n\}_n$ in $\mathcal{SMF}_x$
is topologically equivalent to $G$
since $G$ is minimal and filling
(see the discussion in the last second paragraph in \cite[\S1]{MR2258749}).
Therefore,
we may assume that 
there is a sequence $\{t_n\}_n$ in $\mathbb{R}$
such that $t_n\ge \tau(\varphi_G)+\epsilon/2$ such that $R_{H_n}(t_n)\not\in\hat{N}_i$
and $H_n\to G'\in  \mathcal{SMF}_x^{mf}$ with $L(G')=L(G)$.

When $\{t_n\}_n$ is bounded from above,
we may also assume that $t_n\to t_\infty$.
Since ${\bf T}_{1,x}(H_n)\to {\bf T}_{1,x}(G)$,
$R_{H_n}(t_n)\to R_{G'}(t_\infty)$
(cf. \cite[\S1.1, Theorem]{MR0445013}).
Since $t_\infty\ge \tau(\varphi_G)+\epsilon/2$,
$R_{G'}(t_\infty)\in \hat{N}_i$.
This is a contradiction.

Suppose $t_\infty\to \infty$.
We may assume that $\{R_{H_n}(t_n)\}_n$
converges to $\varphi_\infty$ in $\overline{\Bers{x_0}}$.
By the same discussion as the proof of Proposition \ref{prop:Teichmuller-limit},
we have
$$
{\rm leng}_{\rho_{\varphi_n}}(H_n)\lesssim
e^{-t_n}\ext_{{\bf T}_{1,x}(H_n)}(H_n)^{1/2}
=e^{-t_n}i((H_n)^{F_i},F_i)=e^{-t_n},
$$
where $\varphi_n\in \Bers{x_0}$ is the corresponding point
to $R_{H_n}(t_n)$.
From the continuity of the length function,
we have ${\rm leng}_{\rho_{\varphi_\infty}}(G')=0$.
Therefore we obtain $\varphi_\infty=\varphi_{G'}=\varphi_G$.
This is also a contradiction since $\hat{N}_i$ is a neighborhood of
$\varphi_G$.
\end{proof}

Let $U^{mf}=U\cap \partial^{mf}\Bers{x_0}$.
For $s>0$,
we define
$$
U^{mf}_s=\{\varphi_G\in \hat{N}_i\cap \partial^{mf}\Bers{x_0}\mid
\tau_i(\varphi_G)<s/2\}.
$$
Then,
$\{U^{mf}_s\}_s$ and $U^{mf}$ satisfies
\begin{enumerate}
\item
$U^{mf}_s\subset U^{mf}_{s'}$ for $s<s'$;
\item
$U^{mf}\cap \hat{N}_i=\cup_{s>0}U^{mf}_s$;
\item
$U^{mf}_s$ is open in $\partial^{mf}\Bers{x_0}$ in the sense that for any $\varphi_G\in U^{mf}_s$, there is an open neighborhood $V$ of $\varphi_G$ with $V\cap\partial^{mf}\Bers{x_0} \subset U^{mf}_s$; and
\item
$\mathcal{U}^{mf}_s=(\Xi_{x_0}\circ \proje_x)^{-1}(U^{mf}_s)$ is open in $\mathcal{SMF}^{mf}_x$ in the sense that any $G\in \mathcal{U}^{mf}_s$ admits a neighborhood $V'$ with $V'\cap \mathcal{SMF}^{mf}_x\subset \mathcal{U}^{mf}_s$.
\end{enumerate}
The third condition follows from
Claim \ref{claim:3-3},
and the fourth condition is deduced from
the continuity of the mapping $\Xi_{x_0}\circ \proje_x$ (cf. \S\ref{subsec:BoundarygroupswithoutAPT} and \ref{subsec:Thurstonmeasure}).
From (2) of Proposition \ref{prop:F-G-N}, each $\mathcal{U}^{mf}_s$ is contained in an open set in $\mathcal{SMF}_x$ which is relatively compact in $\mathcal{SMF}^{F_i}_x$.
Such an open set is defined from $(\Xi_{x_0}\circ \boldsymbol{\psi}_x)^{-1}(\hat{N}_i)$, and taken independently of $U$.

Take $s_0>0$ satisfying $U^{mf}_s\ne \emptyset$ for $s>s_0$.
Fix $s>s_0$,
we define subsets $\mathcal{R}$
and $\mathcal{R}_s$ in $\Bers{x_0}\cong \teich_{g,m}$ by
\begin{align*}
\mathcal{R}
&=\{R_{G'}(t)\in \Bers{x_0}\mid \mbox{$G'\in \mathcal{SMF}^{mf}_x\cap \mathcal{SMF}_x^{F_i}$, $t>0$}\},
\\
\mathcal{R}_s
&=\{R_{G'}(t)\in \Bers{x_0}\mid G'\in \mathcal{U}^{mf}_s, t>0\}.
\end{align*}
From the above discussion,
$\mathcal{R}_s$ satisfies an open condition in the sense that
any $z\in \mathcal{R}_s$ admits a neighborhood $V_z$ in $\Bers{x_0}$
with $V_z\cap \mathcal{R}\subset \mathcal{R}_s$.
Next we claim
\begin{claim}
\label{claim:3-4}
For $\varphi_G\in U^{mf}_s\cap \hat{N}_i$,
there is a neighborhood $V_{\varphi_G}$ in $A_{2}(\mathbb{H}^{*},\Gamma_{0})$
such that
$V_{\varphi_G}\cap \partial^{mf}\Bers{x_0}\subset U^{mf}_s\cap \hat{N}_i$
and $V_{\varphi_G}\cap \mathcal{R}\subset \mathcal{R}_s$.
\end{claim}

\begin{proof}[Proof of Claim \ref{claim:3-4}]
Since $U^{mf}_s$ is open,
we can take a neighborhood of $\varphi_G$ with the first condition.
We need to show the existence of a neighborhood of $\varphi_G$ with the second condition.

Otherwise,
there is a sequence $\{z_n\}_n\subset \mathcal{R}\cap \hat{N}_i$
with $z_n\notin \mathcal{R}_s$
and $z_n\to \varphi_G$ in $\overline{\Bers{x_0}}$.
Take $t_n>0$ and $H_n\in \mathcal{SMF}^{mf}_x$
with $R_{H_n}(t_n)=z_n$.
By taking a subsequence,
we may assume that $H_n$ converges to some $G'\in \mathcal{SMF}_x$
as $n\to \infty$.
Then,
$$
{\rm leng}_{\rho_{\varphi_n}}((H_n)^{F_i})
\lesssim \ext_{z_n}((H_n)^{F_i})^{1/2}=e^{-t_n},
$$
where 
$\varphi_n\in \Bers{x_0}$ is the corresponding point
to $z_n\in \teich_{g,m}$. 
Since $z_n\to \varphi_G$ and $G\in \mathcal{SMF}_x^{mf}$,
from the continuity of the length function,
$G'$ is topologically equivalent to $G$.
Therefore,
$\varphi_{H_n}\to \varphi_{G'}=\varphi_{G}$.
This implies that
$\varphi_{H_n}\in U^{mf}_s\cap \hat{N}_i$
and $H_n\in \mathcal{U}^{mf}_s$ for sufficiently large $n$.
This is a contradiction.
\end{proof}

Let us proceed the proof of Proposition \ref{prop:local-comparison}.
We define an open set in the ambient space $A_{2}(\mathbb{H}^{*},\Gamma_{0})$
by
$$
\mathcal{V}_s=
\left(\cup_{z\in \mathcal{R}_s}V_z
\right)\cup 
\left(
\cup_{\varphi_G\in U^{mf}_s}V_{\varphi_G}
\right).
$$
From the definition,
\begin{align*}
\mathcal{V}_s\cap \partial^{mf}\Bers{x_0}
&=U^{mf}_s \\
\mathcal{V}_s\cap \mathcal{R}
&=\mathcal{R}_s.
\end{align*}
Since $\mathcal{SMF}^{mf}_x$ is a subset of full-measure on $\mathcal{SMF}_x$,
from Proposition \ref{prop:comparison},
\begin{align}
\HSmeas_{F,e^{2u}}(\mathcal{V}_s\cap {\rm HS}(F_i,e^{2u}))
&=\HSmeas_{F,e^{2u}}({\bf T}_{e^{2u},x}(\mathcal{U}^{mf}_s))
\label{eq:comparison-on-Bi1}
\\
&\asymp 
\PThursM^{x}(\mathcal{U}^{mf}_s)
=
\pushThursMBers_x(U^{mf}_s)
\nonumber
\\
&\le \pushThursMBers_x(U\cap \hat{N}_i)
\nonumber
\end{align}
for all $u>s$,
where the constant for the comparison is independent of $s>s_0$
and $U$.
From the definition of the function $\tau_i$, 
$$
(\mathcal{V}_s\cap  \mathcal{R})\cap {\rm HS}(F_i,e^{2u})\subset \hat{N}_i
$$
for $u>s/2$.
Since $\extrecip_{F_i,G_i}=\extrecip_{F_i}$ on $\hat{N}_i$ (Proposition \ref{prop:F-G-N}) and $\mu_{i,u}=\mu_{\extrecip_{F_i,G_i},-e^{-2u}}$ is supported on the level set $S_{\extrecip_{F_i,G_i}}(-e^{-2u})$ of $\extrecip_{F_i,G_i}$ (cf. \S\ref{subsubsec:PSH exhaustions}),
\begin{equation}
\label{eq:comparison-on-Bi2}
\mu_{i,u}(\mathcal{V}_s)
=
\HSmeas_{F,e^{2u}}(\mathcal{V}_s\cap {\rm HS}(F_i,e^{2u}))
\end{equation}
when $u$ is sufficiently large.
From 
\cite[Th\'eor\`eme et D\'efinition 3.1]{MR881709}
and
 Proposition \ref{prop:ddcuFG-volume-finite},
$\mu_{i,u}$ converges weakly 
to the boundary measure $\mu_i$ of $\extrecip_{F_i,G_i}$.
Hence,
from \eqref{eq:comparison-on-Bi1}
and \eqref{eq:comparison-on-Bi2}
we conclude
$$
\mu_i(U^{mf}_s)
=
\mu_{i}(\mathcal{V}_s)\le \liminf_{u\to \infty}\mu_{i,u}(\mathcal{V}_s)
\lesssim
\pushThursMBers_x(U\cap \hat{N}_i)
$$
since $\mathcal{V}_s$ is an open set in the ambient space (cf. \cite[(iv) of Theorem 2.1]{MR1700749}).
Since $\{U^{mf}_s\}_s$ is an increasing sequence of measurable sets and $\cup_{s>0}U^{mf}_s=U^{mf}\cap \hat{N_i}$, from Proposition \ref{prop:Bers-APT}, from Proposition \ref{prop:ddcuFG-volume-finite}, we deduce
\begin{align*}
\harmonicmeasure^{x_0}_{x}(U\cap \hat{N}_i)
&=\harmonicmeasure^{x_0}_{x}(U^{mf}\cap \hat{N}_i)
\\
&\asymp \mu_i(U^{mf}\cap \hat{N}_i)
=
\lim_{s\to \infty}\mu_i(U^{mf}_s)
\lesssim
\pushThursMBers_x(U\cap \hat{N}_i),
\end{align*}
where the constants for the comparisons are independent of $U$.
\end{proof}


\subsection{Corollary of Theorem \ref{thm:support-PH-measure}}
The pushforward measure
$\pushThursMBers_x$ is supported on $\partial^{ue}\teich_{g,m}$ (cf. \S\ref{subsec:pushforwardmeasure-Bers-boundary}).
The Thurston measure $\PThursM^x$ on $\mathcal{SMF}^{mf}_x$
is defined from the Euclidean measure on the train track coordinates.
Hence,
we can see that $\pushThursMBers_x$ has no atom
on $\partial\Bers{x_0}$
since the inverse image $(\Xi_{x_0}\circ \proje_{x_0})^{-1}(\varphi_G)$
for $\varphi_G\in \partial^{mf}\Bers{x_0}$
is a proper (linear) subspace in any train track coordinates
around $G$.
Thus, from Theorem \ref{thm:support-PH-measure}, we deduce

\begin{corollary}
\label{coro:support-PH-measure}
For any $x\in \teich_{g,m}$,
the pluriharmonic measure $\harmonicmeasure^{x_0}_{x}$
is supported on $\partial^{ue}\Bers{x_0}$
and has no atom on $\partial\Bers{x_0}$.
\end{corollary}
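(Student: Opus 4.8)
The plan is to obtain the corollary as a direct consequence of the absolute continuity established in Theorem~\ref{thm:support-PH-measure}: once $\harmonicmeasure^{x_0}_x\ll\pushThursMBers_x$ is known, every $\pushThursMBers_x$-null Borel set is automatically $\harmonicmeasure^{x_0}_x$-null, so both assertions reduce to verifying the corresponding statements for the pushforward measure $\pushThursMBers_x$. Accordingly I would first reduce the two claims to showing that $\pushThursMBers_x(\partial\Bers{x_0}\setminus\partial^{ue}\Bers{x_0})=0$ and that $\pushThursMBers_x(\{\varphi\})=0$ for every $\varphi\in\partial\Bers{x_0}$.

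For the support statement I would invoke Masur's observation recalled in \S\ref{subsec:pushforwardmeasure-Bers-boundary}: $\mathcal{SMF}^{ue}_x$ has full $\PThursM^x$-measure in $\mathcal{SMF}_x$, so that its image $\partial^{ue}\Bers{x_0}=(\Xi_{x_0}\circ\proje_x)(\mathcal{SMF}^{ue}_x)$ has full $\pushThursMBers_x$-measure. Absolute continuity then forces $\harmonicmeasure^{x_0}_x(\partial\Bers{x_0}\setminus\partial^{ue}\Bers{x_0})=0$, which is exactly the assertion that $\harmonicmeasure^{x_0}_x$ is supported on $\partial^{ue}\Bers{x_0}$. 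For atomlessness, since cusps are already $\harmonicmeasure^{x_0}_x$-negligible by Proposition~\ref{prop:Bers-APT}, it suffices to handle $\varphi_G\in\partial^{mf}\Bers{x_0}$; here the fiber $(\Xi_{x_0}\circ\proje_x)^{-1}(\varphi_G)$ is the set of projective minimal filling foliations with support lamination $L(G)$, which is cut out as a proper linear subspace in any complete train track chart and therefore has vanishing Euclidean, hence Thurston, measure. Thus $\pushThursMBers_x(\{\varphi_G\})=\PThursM^x((\Xi_{x_0}\circ\proje_x)^{-1}(\varphi_G))=0$, and absolute continuity transfers this to $\harmonicmeasure^{x_0}_x$.

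The essential content of the corollary lies entirely in Theorem~\ref{thm:support-PH-measure}, established through the local comparison of Proposition~\ref{prop:local-comparison}, which I am assuming; given that, there is no real obstacle. The only step demanding genuine care is the atomlessness of $\pushThursMBers_x$, namely the verification that a single ending lamination pulls back to a proper linear subspace of a train track coordinate chart and hence to a Thurston-null set. With that checked, the corollary follows in one line from absolute continuity.
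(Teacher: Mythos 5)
Your proposal is correct and follows essentially the same route as the paper: the paper likewise deduces the corollary from the absolute continuity of Theorem~\ref{thm:support-PH-measure}, using Masur's observation that $\partial^{ue}\Bers{x_0}$ has full $\pushThursMBers_x$-measure together with the fact that the fiber $(\Xi_{x_0}\circ \proje_{x_0})^{-1}(\varphi_G)$ over any $\varphi_G\in\partial^{mf}\Bers{x_0}$ lies in a proper linear subspace of a train track chart and is therefore Thurston-null. Your separate appeal to Proposition~\ref{prop:Bers-APT} for cusps is harmless but redundant, since $\pushThursMBers_x$ already assigns measure zero to $\partial^{cusp}\Bers{x_0}$ and absolute continuity handles those points as well.
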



\section{Pluriharmonic Poisson kernel}
\label{sec:Poisson-kernel}
The aim of this section
is to determine the Poisson kernel for Teichm\"uller space.
%

\begin{theorem}[Poisson kernel]
\label{thm:Poisson-kernel}
The function \eqref{eq:Poisson-kernel}
is the Poisson kernel.
Namely,
for $x,y\in \teich_{g,m}$,
$$
d\harmonicmeasure^{x_0}_y=\mathbb{P}(x,y,\,\cdot\,)d\harmonicmeasure^{x_0}_x
$$
on $\partial\Bers{x_0}$.
\end{theorem}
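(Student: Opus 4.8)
The plan is to realize both pluriharmonic measures as Demailly boundary measures and to compare them by computing the boundary ratio of the corresponding pluricomplex Green functions. By definition $\harmonicmeasure^{x_0}_x$ is the boundary measure associated with the continuous PSH-exhaustion $u_x=(2\pi)^{-\convgenus}g_{\teich_{g,m}}(x,\cdot)$ on $\Bers{x_0}\cong\teich_{g,m}$, and since $(dd^cu_x)^{\convgenus}=\delta_x$ has total mass $1$, the finiteness hypothesis $\int_{\Bers{x_0}}(dd^cu_x)^{\convgenus}<\infty$ of Proposition \ref{prop:Demally1} holds automatically for both $u_x$ and $u_y$. Everything then reduces to the boundary behaviour of the quotient $u_y/u_x=g_{\teich_{g,m}}(y,\cdot)/g_{\teich_{g,m}}(x,\cdot)$.

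First I would insert Krushkal's formula \eqref{eq:Green-function}, $g_{\teich_{g,m}}(x,z)=\log\tanh d_T(x,z)$. As $z\to\partial\Bers{x_0}$ one has $d_T(x,z)\to\infty$ and $\log\tanh t=-2e^{-2t}(1+o(1))$, so
\[
\frac{u_y(z)}{u_x(z)}=\frac{g_{\teich_{g,m}}(y,z)}{g_{\teich_{g,m}}(x,z)}=e^{2(d_T(x,z)-d_T(y,z))}\,(1+o(1)).
\]
Since $|d_T(x,z)-d_T(y,z)|\le d_T(x,y)$, this quotient stays between $e^{-2d_T(x,y)}$ and $e^{2d_T(x,y)}$ near the boundary, so $\lambda(\varphi)=\limsup_{z\to\varphi}u_y(z)/u_x(z)$ is finite on the whole of $N_0=\partial\Bers{x_0}$, which is trivially relatively open.

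The analytic heart is the identification of this limit at uniquely ergodic boundary points: for $\varphi\in\partial^{ue}\Bers{x_0}$ with associated foliation $F_\varphi$,
\[
\lim_{z\to\varphi}\bigl(d_T(x,z)-d_T(y,z)\bigr)=\beta(x,y,\varphi)=\tfrac12\log\frac{\ext_x(F_\varphi)}{\ext_y(F_\varphi)},
\]
whence $\lim_{z\to\varphi}u_y(z)/u_x(z)=\ext_x(F_\varphi)/\ext_y(F_\varphi)$. This is exactly the statement that $\beta$ is the horofunction for the Teichm\"uller distance at $\varphi$, which I would extract from the extremal-length geometry recorded in \eqref{eq:ex-geo2} together with the horofunction results of \cite{MR3278905} and \cite{MR3289706}. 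The step demanding the most care—and the main obstacle—is that the convergence $z\to\varphi$ is taken in the \emph{Bers} topology, whereas the extremal-length formula lives on the Gardiner--Masur compactification; I therefore need that a sequence converging to a uniquely ergodic Bers boundary point also converges to $[F_\varphi]$ in the Gardiner--Masur sense, which is where unique ergodicity and the ending lamination theory (cf. \S\ref{subsec:BoundarygroupswithoutAPT} and \cite{MR3330544}) are essential.

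Finally I would apply Proposition \ref{prop:Demally1} twice on $N_0=\partial\Bers{x_0}$: once with $(u,v)=(u_y,u_x)$, yielding $d\harmonicmeasure^{x_0}_y\le\lambda^{\convgenus}\,d\harmonicmeasure^{x_0}_x$, and once with $x,y$ interchanged, yielding $d\harmonicmeasure^{x_0}_x\le\tilde\lambda^{\convgenus}\,d\harmonicmeasure^{x_0}_y$ for $\tilde\lambda(\varphi)=\limsup_{z\to\varphi}u_x(z)/u_y(z)$. By Corollary \ref{coro:support-PH-measure} both measures are carried by $\partial^{ue}\Bers{x_0}$, and there the previous step turns the lim-sups into genuine limits with $\lambda(\varphi)=\ext_x(F_\varphi)/\ext_y(F_\varphi)$ and $\tilde\lambda(\varphi)=1/\lambda(\varphi)$. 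Writing $\rho=d\harmonicmeasure^{x_0}_y/d\harmonicmeasure^{x_0}_x$, the two inequalities give $\rho\le\lambda^{\convgenus}$ and $\rho\ge\lambda^{\convgenus}$, so $\rho=\lambda^{\convgenus}=\mathbb{P}(x,y,\cdot)$ on $\partial^{ue}\Bers{x_0}$. Since $\mathbb{P}(x,y,\cdot)\equiv 1$ off $\partial^{ue}\Bers{x_0}$ and both measures vanish there, the identity $d\harmonicmeasure^{x_0}_y=\mathbb{P}(x,y,\cdot)\,d\harmonicmeasure^{x_0}_x$ extends to all of $\partial\Bers{x_0}$, as required.
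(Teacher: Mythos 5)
Your proposal is correct and follows essentially the same route as the paper's own proof: Krushkal's formula gives the boundary asymptotics $g_{\teich_{g,m}}(y,z)/g_{\teich_{g,m}}(x,z)=e^{2(d_T(x,z)-d_T(y,z))}(1+o(1))$, hence a uniformly bounded lim-sup (the paper's \eqref{eq:limit-Poisson-kernel2}), the Gardiner--Masur/horofunction identification via \eqref{eq:ex-geo1}--\eqref{eq:ex-geo2} gives the exact limit $\ext_x(F_\varphi)/\ext_y(F_\varphi)$ at points of $\partial^{ue}\Bers{x_0}$ (the paper's \eqref{eq:limit-Poisson-kernel}), and the two-sided application of Proposition \ref{prop:Demally1} combined with Corollary \ref{coro:support-PH-measure} and the reciprocity $\mathbb{P}(x,y,\cdot)=\mathbb{P}(y,x,\cdot)^{-1}$ yields the kernel. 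The one ingredient you invoke rather than prove---that convergence to $\varphi\in\partial^{ue}\Bers{x_0}$ in the Bers topology forces Gardiner--Masur convergence to $[F_\varphi]$---is precisely the paper's Claim \ref{claim:3-1}, which it establishes (citing \cite[\S3]{MR3336619}) via short curves with bounded extremal length, the Bers inequality, continuity of the length function, unique ergodicity, and the characterization of uniquely ergodic points in the Gardiner--Masur boundary from \cite{MR3009545}.
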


\begin{proof}
Since the function \eqref{eq:Poisson-kernel}
is reciprocal in the sense that 
$\mathbb{P}(x,y,\varphi)=\mathbb{P}(y,x,\varphi)^{-1}$
for $x,y\in \teich_{g,m}$ and $\varphi\in \partial \Bers{x_0}$,
from Demailly's theorem (Proposition \ref{prop:Demally1})
and Corollary \ref{coro:support-PH-measure},
the assertion of the theorem follows from
\begin{align}
\lim_{\teich_{g,m}\ni z\to \varphi}
\frac{g_{\teich_{g,m}}(y,z)}{g_{\teich_{g,m}}(x,z)}
&=
\frac{\ext_{x}(F_{\varphi})}{\ext_{y}(F_{\varphi})},
\label{eq:limit-Poisson-kernel}
\\
\limsup_{\teich_{g,m}\ni z\to \varphi'}
\frac{g_{\teich_{g,m}}(y,z)}{g_{\teich_{g,m}}(x,z)}
&\le e^{2d_T(x,y)}
\label{eq:limit-Poisson-kernel2}
\end{align}
for $x,y\in \teich_{g,m}$,
$\varphi\in \partial^{ue}\Bers{x_0}$
and $\varphi'\in \partial\Bers{x_0}$,
where $F_\varphi\in \mathcal{MF}$ is defined as \S\ref{subsubsec:results}.
Indeed,
\eqref{eq:limit-Poisson-kernel} and \eqref{eq:limit-Poisson-kernel2} implies that the left-hand side of \eqref{eq:limit-Poisson-kernel2} is measurable and integrable on $\partial \Bers{x_0}$ with respect to the harmonic measure $\harmonicmeasure^{x_0}_x$ ($x\in \teich_{g,m}$) and coincides with our function $\mathbb{P}(x,y,\cdot )$ a.e. on $\partial \Bers{x_0}$ from Corollary \ref{coro:support-PH-measure}.

\eqref{eq:limit-Poisson-kernel2} follows from \eqref{eq:Green-function} and
$$
\frac{g_{\teich_{g,m}}(x,z)}{g_{\teich_{g,m}}(y,z)}
=e^{-2(d_{T}(x,z)-d_{T}(y,z))}(1+o(1))
\le e^{2d_{T}(x,y)}(1+o(1))
$$
as $z\to \varphi'\in \partial\Bers{x_0}$.
We show \eqref{eq:limit-Poisson-kernel}.
%
We claim

\begin{claim}
\label{claim:3-1}
Let $\{x_{n}\}_{n}\subset \teich_{g,m}\cong \Bers{x_0}$ be a sequence converging to $\varphi\in \partial^{ue}\Bers{x_0}$.
Then,
$\{x_{n}\}_{n}$ converges
to the projective class $[F_{\varphi}]$
in the Gardinar-Masur compactification.
\end{claim}

\begin{proof}[Proof of Claim \ref{claim:3-1}]
This claim follows by applying the discussion in \cite[\S3]{MR3336619}.
We give a proof for confirmation.

Take $\alpha_{n}\in \mathcal{S}$ with $\ext_{x_{n}}(\alpha_{n})\le M$
for some constant $M$ depending only on $(g,m)$
(cf. \cite[Theorem 1]{MR780038}).
By taking a subsequence, we may assume that
$t_{n}\alpha_{n}\to F\in \mathcal{MF}-\{0\}$
with some $t_n>0$.
Since $x_{n}$ converges to a totally degenerate group without APT,
$\ext_{x_0}(\alpha_{n})\to \infty$ and hence 
$t_{n}\to 0$
(cf. \cite{MR0396937}).
By 
the Bers inequality
and \eqref{eq:comparison-hyp-ext},
the hyperbolic length of the geodesic representation of $t_{n}\alpha_{n}$
in the quasifuchsian manifold associated with $x_{n}$ tends to $0$.
From the continuity of the length function,
any sublamination of the support of $F$
is non-realizable in the Kleinian manifold
associated with $\varphi$.
Hence,
the support of $F$ is contained in $F_{\varphi}$ and $i(F,F_{\varphi})=0$.
Thus we have $[F]=[F_{\varphi}]$
in $\mathcal{PMF}$
since $F_{\varphi}$ is uniquely ergodic.

By taking a subsequence if necessary, we may assume that $x_{n}\to \mathfrak{p}\in \partial_{GM}\teich_{g,m}$.
From \eqref{eq:ex-geo2}, we obtain
\begin{align*}
i_{x_0}(\mathfrak{p},[F_{\varphi}])
&=i_{x_0}(\mathfrak{p},[F])=
\lim_{n\to \infty}
i_{x_0}(x_{n},[t_{n}\alpha_{n}]) \\
&=
\lim_{n\to \infty}e^{-d_{T}(x_0,x_{n})}
\frac{\ext_{x_{n}}(t_{n}\alpha_{n})^{1/2}}
{\ext_{x_0}(t_{n}\alpha_{n})^{1/2}} \\
&\le \lim_{n\to \infty}
\frac{M^{1/2}t_{n}e^{-d_{T}(x_0,x_{n})}}{{\ext_{x_0}(t_{n}\alpha_{n})^{1/2}}}
=0.
\end{align*}
From the characterization of uniquely ergodic measured foliations in the Gardiner-Masur
compactification,
we conclude that $\mathfrak{p}=[F_{\varphi}]$
(cf. \cite[Theorem 3]{MR3009545}).
\end{proof}

Let us finish the proof.
From
\eqref{eq:ex-geo1},
\eqref{eq:ex-geo2}
and
the Krushkal formula \eqref{eq:Green-function},
\begin{align*}
\frac{g_{\teich_{g,m}}(y,z)}{g_{\teich_{g,m}}(x,z)}
&=\exp(2(d_T(x,z)-d_T(y,z)))(1+o(1)) \\
&=\exp(2d_T(y,x))
\exp(-4\gromov{x}{z}{y})(1+o(1)) 
\\
&\to\exp(2d_T(y,x))
 \left(
 \exp(-d_{T}(y,x))
 \frac{\ext_{x}(F_\varphi)^{1/2}}{\ext_{y}(F_\varphi)^{1/2}}\right)^2
 \\
& =
\frac{\ext_{x}(F_\varphi)}{\ext_{y}(F_\varphi)}
\end{align*}
as $z\to \varphi\in \partial^{ue}\Bers{x_0}$.
This  implies \eqref{eq:limit-Poisson-kernel}.
\end{proof}

\begin{remark}
\label{remark:Levi-Poisson}
The Poisson kernel $\mathbb{P}(x,y,\,\cdot\,)$ is
not pluriharmonic in the variable $y$
when $\convgenus\ge 2$.
Indeed,
when $F\in \mathcal{MF}$ is uniquely ergodic and essentially complete,
for
$y=(M_1,f_1)\in \teich_{g,m}$ and
$v\in T_y\teich_{g,m}$,
\begin{align*}
\Levi{\mathbb{P}(x,\,\cdot\,,F)}{v}{\overline{v}}
&=
-\convgenus\frac{\ext_{x}(F)^{\convgenus}}{\ext_{y}(F)^{\convgenus+1}}
\left(
2\int_{M_1}\frac{|\eta_v|^2}{|q_{F,y}|}
-\frac{\convgenus+1}{\ext_{y}(F)}
\left|
\langle v,q_{F,y}\rangle
\right|^2
\right),
\end{align*}
where $\mathcal{L}$ stands for the Levi form
and $\eta_v\in \mathcal{Q}_y$ is the $q_{F,y}$-realization of $v$ (cf. \eqref{eq:q0-realization} and \cite[Theorem 5.1]{MR3715450}).
When $v$ is represented by the infiniteismal Beltrami differential $\overline{q_{F,y}}/|q_{F,y}|$,
$\eta_v=q_{F,y}$.
Hence,
the Levi form of $\mathbb{P}(x,\,\cdot\,,F)$ at $y$ is positive in the direction $v$.
However,
when $v$ satisfies $\langle v,q_{F,y}\rangle=0$,
the Levi form at $y$ is negative in this direction $v$.

On the other hand,
the Poisson kernel $\mathbb{P}(x,y,\,\cdot\,)$
is plurisubharmonic in the variable $x$
(cf. \cite[Corollary 1.1]{MR3715450}).
\end{remark}

\section{The Green formula}
\label{sec:intrinsic-representation}
The aim of this section is to complete the proof of 
the Poisson integral formula (Theorem \ref{thm:Poisson-integral-formula}).
Indeed,
Theorem \ref{thm:Poisson-integral-formula}
is derived from the following theorem.

\begin{theorem}[Green formula]
\label{thm:Poisson-integral-formula-sub}
Let $V$ be a continuous function on the Bers compactification
$\overline{\Bers{x_0}}$
which is plurisubharmonic on $\Bers{x_0}\cong \teich_{g,m}$.
Then
\begin{align*}
V(x)
&=\int_{\partial \Bers{x_0}}V(\varphi)
\mathbb{P}(x_0,x,\varphi)
d\pushThursMBers_{x_0}(\varphi)
-
\int_{\Omega}dd^cV\wedge |g_{x}|(dd^cg_{x})^{\convgenus-1},
\end{align*}
where 
$g_{x}(y)=(2\pi)^{-1}\log\tanh d_T(x,y)$.
Furthermore,
when $\convgenus\ge 2$,
$$
V(x)=\int_{\partial \Bers{x_0}}V(\varphi)
\mathbb{P}(x_0,x,\varphi)
d\pushThursMBers_{x_0}(\varphi)
-
\int_{\Omega}dd^cV\wedge (dd^cg_{x})^{\convgenus-2}\wedge
dg_x\wedge d^cg_x.
$$
\end{theorem}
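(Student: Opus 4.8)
The plan is to read the first identity as the Teichm\"uller-space incarnation of Demailly's integral formula \eqref{eq:integral-formula}, and then to obtain the second identity from the first by an integration by parts, valid once $\convgenus\ge 2$.

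First I would use the Krushkal formula \eqref{eq:Green-function} to recognize $g_x$ as the normalization $(2\pi)^{-1}g_{\teich_{g,m}}(x,\cdot)$ of the pluricomplex Green function with pole at $x$; with this normalization $(dd^cg_x)^{\convgenus}=\delta_x$, so $g_x$ is precisely the continuous PSH exhaustion $u_{\Bers{x_0},x}$ of $\Omega=\Bers{x_0}\cong\teich_{g,m}$ whose boundary measure is the pluriharmonic measure $\harmonicmeasure^{x_0}_x$. Applying \eqref{eq:integral-formula} (equivalently, letting $r\to 0^-$ in the Lelong--Jensen formula \eqref{eq:Demailly-LJ-formula}, which is legitimate since $\int_{\teich_{g,m}}(dd^cg_x)^{\convgenus}=1<\infty$) to the given $V$, which is PSH on $\Bers{x_0}$ and continuous on $\overline{\Bers{x_0}}$, yields
$$
\int_{\partial\Bers{x_0}}V\,d\harmonicmeasure^{x_0}_x=V(x)+\int_{\Bers{x_0}}dd^cV\wedge|g_x|(dd^cg_x)^{\convgenus-1}.
$$
It then remains to rewrite the boundary integral, for which I would invoke the Poisson kernel theorem (Theorem \ref{thm:Poisson-kernel}) together with the identification of the pluriharmonic measure with the Thurston measure (Theorem \ref{thm:PHmeasure-is-Thurston-measure}; combining the absolute continuity of Theorem \ref{thm:support-PH-measure} with the cocycle of Theorem \ref{thm:Poisson-kernel}, and noting $\mathbb{P}(x_0,x_0,\cdot)=1$ so $\harmonicmeasure^{x_0}_{x_0}=\pushThursMBers_{x_0}$) to substitute $d\harmonicmeasure^{x_0}_x=\mathbb{P}(x_0,x,\cdot)\,d\pushThursMBers_{x_0}$. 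Rearranging gives the first formula.

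For the second identity I would integrate by parts, assuming $\convgenus\ge 2$ so that $(dd^cg_x)^{\convgenus-2}$ is defined. Since $|g_x|=-g_x$ and $g_x\,dd^cg_x=d(g_x\,d^cg_x)-dg_x\wedge d^cg_x$, setting $\Theta=dd^cV\wedge(dd^cg_x)^{\convgenus-2}$ (a $d$-closed current) one has
$$
dd^cV\wedge|g_x|(dd^cg_x)^{\convgenus-1}=-\Theta\wedge d(g_x\,d^cg_x)+dd^cV\wedge(dd^cg_x)^{\convgenus-2}\wedge dg_x\wedge d^cg_x.
$$
Integrating over $\Bers{x_0}$ and applying Stokes to the exact term $\Theta\wedge d(g_x\,d^cg_x)=d(\Theta\wedge g_x\,d^cg_x)$, the contribution along $\partial\Bers{x_0}$ vanishes because $g_x\to 0$ there, and the contribution from the pole at $x$ vanishes because $(dd^cg_x)^{\convgenus-1}$ carries no mass at $x$ when $\convgenus\ge 2$; what remains is exactly the second formula.

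The main obstacle is the rigorous justification of these manipulations, since $V$ is merely continuous PSH (so $dd^cV$ is a positive current, not a smooth form) and $g_x$ is unbounded at $x$: all wedge products $(dd^cg_x)^k$ and their pairings with $g_x$ and $dd^cV$ must be interpreted in the Bedford--Taylor sense, and the integration by parts has to be carried out on $\Bers{x_0}\setminus B_\epsilon(x)$ with control of the spherical term $\int_{\partial B_\epsilon(x)}\Theta\wedge g_x\,d^cg_x$ as $\epsilon\to 0$, together with a matching regularization near $\partial\Bers{x_0}$ so that the boundary vanishing of $g_x$ may be used. The finiteness of the Monge--Amp\`ere mass (Proposition \ref{prop:ddcuFG-volume-finite}) and the comparison \eqref{eq:comparison-green-dT} are the inputs that keep every mass finite throughout.
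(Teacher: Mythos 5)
Your proposal is correct and follows the paper's own route: the paper obtains the Green formula exactly as you do, by combining Demailly's Jensen--Lelong/Poisson formula \eqref{eq:integral-formula} for the exhaustion $g_x=(2\pi)^{-1}g_{\teich_{g,m}}(x,\cdot)$ (via the Krushkal formula \eqref{eq:Green-function}) with Theorem \ref{thm:Poisson-kernel} and Theorem \ref{thm:PHmeasure-is-Thurston-measure} to rewrite $d\harmonicmeasure^{x_0}_x=\mathbb{P}(x_0,x,\cdot)\,d\pushThursMBers_{x_0}$, and your Stokes/integration-by-parts argument supplies the passage to the second identity that the paper leaves implicit in its citation of Demailly. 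The one inaccuracy is your parenthetical gloss on Theorem \ref{thm:PHmeasure-is-Thurston-measure}: absolute continuity, the cocycle relation, and $\mathbb{P}(x_0,x_0,\cdot)=1$ do not by themselves force the density to equal $1$ (the paper's proof of that theorem needs $\mcg_{g,m}$-equivariance and the ergodicity of the action on $\mathcal{PMF}$), but since you invoke the theorem itself rather than this sketch, the argument is unaffected.
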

From the definitions of the function $\mathbb{P}$ and the probability measure $\pushThursMBers_{x_0}$,
the first terms of the above Green formulas
are dealt with from Thurston theory and Extremal length geometry.
It is also possible to discuss the second terms
from the topological aspect in Teichm\"uller theory.
Indeed,
the Levi form of the pluricomplex Green function has a topological interpretation
in terms of the Thurston symplectic form on $\mathcal{MF}$
via Dumas' K\"ahler (symplectic) structure
on the space of holomorphic quadratic differentials
(cf.  \cite{miyachi-pluripotentialtheory1}.
See also \cite[Theorem 5.8]{MR3413977}).

Theorem \ref{thm:Poisson-integral-formula-sub}
follows from
the following theorem,
Theorem \ref{thm:Poisson-kernel},
and the Jensen-Lelong formula \eqref{eq:integral-formula}
(cf. \cite[Th\'eor\`eme 5.1]{MR881709}).

\begin{theorem}[PH measure is Thurston measure]
\label{thm:PHmeasure-is-Thurston-measure}
For any $x\in \teich_{g,m}$,
$$
\harmonicmeasure^{x_0}_{x}=\pushThursMBers_x
$$
on $\partial\Bers{x_0}$.
\end{theorem}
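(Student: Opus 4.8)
The plan is to show first that the two families $\{\pushThursMBers_x\}_x$ and $\{\harmonicmeasure^{x_0}_x\}_x$ obey the same cocycle, and then to pin down the single remaining Radon--Nikodym derivative by an exact boundary computation. For the cocycle, I would feed the homogeneity of the extremal length into the marking-change formula \eqref{eq:push-forward-marking_change}: writing a uniquely ergodic class as $[F_\varphi]$ and normalising its representatives on $\mathcal{SMF}_{x_0}$ and on $\mathcal{SMF}_x$, the density produced in \eqref{eq:push-forward-marking_change} becomes exactly $(\ext_{x_0}(F_\varphi)/\ext_x(F_\varphi))^{\convgenus}=\mathbb{P}(x_0,x,\varphi)$ (cf. \eqref{eq:Poisson-kernel}). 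Pushing forward by $\Xi_{x_0}\circ\proje_x$ yields $d\pushThursMBers_x=({\rm Vol}_{Th}(x_0)/{\rm Vol}_{Th}(x))\,\mathbb{P}(x_0,x,\cdot)\,d\pushThursMBers_{x_0}$ on $\partial^{ue}\Bers{x_0}$, which is of full measure by Corollary \ref{coro:support-PH-measure}. Comparing with Theorem \ref{thm:Poisson-kernel}, i.e. $d\harmonicmeasure^{x_0}_x=\mathbb{P}(x_0,x,\cdot)\,d\harmonicmeasure^{x_0}_{x_0}$, and using the absolute continuity of Theorem \ref{thm:support-PH-measure}, I find that $\Theta=d\harmonicmeasure^{x_0}_{x_0}/d\pushThursMBers_{x_0}$ is a single function and that $d\harmonicmeasure^{x_0}_x/d\pushThursMBers_x=\Theta\cdot{\rm Vol}_{Th}(x)/{\rm Vol}_{Th}(x_0)$ for every $x$. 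Hence it suffices to prove $\Theta\equiv 1$: once this is known, equating the total masses of the two probability measures forces ${\rm Vol}_{Th}(x)\equiv{\rm Vol}_{Th}(x_0)$, and the identity then holds for all $x$ (recovering the constancy of the Hubbard--Masur function as a by-product).

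To identify $\Theta$ I would compute $d\harmonicmeasure^{x_0}_{x_0}/d\pushThursMBers_{x_0}$ \emph{exactly} on each chart $\hat{N}_i$ of the covering from \S\ref{subsec:A-covering-partial-Bers}, routing through the boundary measure $\mu_i$ of the local exhaustion $\extrecip_{F_i,G_i}=\extrecip_{F_i}$. The point is to upgrade the mere comparability \eqref{eq:comparison-green-dT} to a genuine limit. By the Krushkal formula \eqref{eq:Green-function} one has $g_{\teich_{g,m}}(x_0,w)=-2e^{-2d_T(x_0,w)}(1+o(1))$; and for $\varphi\in\partial^{ue}\Bers{x_0}\cap\hat{N}_i$ the Gardiner--Masur asymptotics \eqref{eq:ex-geo2}--\eqref{eq:ex-geo3}, together with the convergence $w\to[F_\varphi]$ in $\cl{\teich_{g,m}}$ established inside the proof of Theorem \ref{thm:Poisson-kernel}, give $\lim_{w\to\varphi}e^{-2d_T(x_0,w)}\ext_w(F_i)=i(F_\varphi,F_i)^2/\ext_{x_0}(F_\varphi)$. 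Consequently $\lim_{w\to\varphi}g_{\teich_{g,m}}(x_0,w)/\extrecip_{F_i}(w)$ exists and equals $2\,i(F_\varphi,F_i)^2/\ext_{x_0}(F_\varphi)$, so that Demailly's Proposition \ref{prop:Demally1} applies with \emph{equality} (not merely an inequality) and produces an explicit density of $\harmonicmeasure^{x_0}_{x_0}$ with respect to $\mu_i$ on $\partial^{ue}\Bers{x_0}\cap\hat{N}_i$, up to the fixed $(2\pi)$-powers in the normalisation $u_{\Omega,x_0}=(2\pi)^{-\convgenus}g_{\teich_{g,m}}(x_0,\cdot)$.

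Finally I would match $\mu_i$ with $\pushThursMBers_{x_0}$ exactly rather than up to constants. The Monge--Amp\`ere identity \eqref{ddcNExt} shows that $(dd^c\extsymp_{F_i})^{\convgenus}$ is the Euclidean volume in the chart $\Phi$, the horosphere measure \eqref{eq:measure-on-horosphere} is its contraction along the gradient flow, and the piecewise-linear homeomorphism $\Psi$ of \eqref{eq:HF-MF} carries it to the Thurston measure; the flow-invariance \eqref{isomeasure-mu} lets me transport the whole computation to a single horosphere, so that the comparison in Proposition \ref{prop:comparison} can be refined to an equality of densities on $\mathcal{SMF}^{F_i}_{x_0}$. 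Multiplying the two exact densities and invoking the probability normalisation of both $\harmonicmeasure^{x_0}_{x_0}$ and $\pushThursMBers_{x_0}$ then forces $\Theta\equiv1$. The main obstacle is precisely this last identification: turning every $\asymp$ in \eqref{compactizon} and Proposition \ref{prop:comparison} into an exact equality of Radon--Nikodym densities, since the factor $i(F_\varphi,F_i)^2/\ext_{x_0}(F_\varphi)$ emerging from the Green function on the one hand, and the Jacobian of $\Psi$ along the horosphere on the other, must be shown to be reciprocal. Verifying that Demailly's boundary limit and the Thurston Jacobian cancel exactly — so that no residual $\varphi$-dependence or global constant survives — is where the genuine work lies.
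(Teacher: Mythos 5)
Your first paragraph is a correct reduction: combining Theorem \ref{thm:Poisson-kernel}, the marking-change formula \eqref{eq:push-forward-marking_change} (pushed forward to $\partial\Bers{x_0}$), and the absolute continuity of Theorem \ref{thm:support-PH-measure}, the theorem is indeed equivalent to showing that the single density $\Theta=d\harmonicmeasure^{x_0}_{x_0}/d\pushThursMBers_{x_0}$ is constant (and then $\equiv 1$ by normalisation, which also recovers Corollary \ref{coro:Hubbard-Masur-constant}). But everything after that is a plan, not a proof, and the gap you flag yourself at the end is exactly the fatal one. The paper's machinery — \eqref{compactizon}, Proposition \ref{prop:comparison}, Proposition \ref{prop:local-comparison} — only ever produces two-sided comparisons $\asymp$ with uncontrolled constants, for a structural reason: the Thurston measure lives in integral PL (train-track) charts while the Monge--Amp\`ere measure $(dd^c\extsymp_{F_i})^{\convgenus}$ is the Euclidean measure in the period chart $\Phi$, and the transition between these is only piecewise linear, so the pushforward under $\Psi$ of \eqref{eq:HF-MF} is locally comparable to, but not identified with, $\ThursM$. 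Upgrading this to an exact equality of densities (and then verifying that the resulting Jacobian is exactly reciprocal to the Demailly limit $2\,i(F_\varphi,F_i)^2/\ext_{x_0}(F_\varphi)$, with no residual $\varphi$-dependence) is a substantial theorem in its own right, essentially requiring the symplectic identification of the Hubbard--Masur map with the Thurston symplectic form (cf.\ \cite[Theorem 5.8]{MR3413977}); your proposal merely asserts that this cancellation must happen. Note also that your intermediate limit computation, while correct, is exactly the content of \eqref{eq:limit-Poisson-kernel} already used in Theorem \ref{thm:Poisson-kernel}, so it buys nothing beyond what the reduction of your first paragraph already contains.

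The idea you are missing is that no exact local computation is needed: the paper kills $\Theta$ by symmetry and ergodicity. Both families of measures are equivariant under the mapping class group — $[\omega]_*\harmonicmeasure^{x_0}_{x}=\harmonicmeasure^{x_0}_{[\omega](x)}$ (Lemma \ref{lem:mcg-PH}) and $[\omega]_*\pushThursMBers_x=\pushThursMBers_{[\omega](x)}$ together with $d\pushThursMBers_{[\omega](x)}=\mathbb{P}(x,[\omega](x),\cdot)\,d\pushThursMBers_x$ (Lemma \ref{lem:Th-points}) — and comparing these with Theorem \ref{thm:Poisson-kernel} shows that the Radon--Nikodym derivative $\Lambda_x=d\harmonicmeasure^{x_0}_x/d\pushThursMBers_x$ satisfies $\Lambda_x\circ[\omega]^{-1}=\Lambda_x$ a.e.\ for every $[\omega]\in\mcg_{g,m}$. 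Since the $\mcg_{g,m}$-action on $\mathcal{PMF}$ is ergodic for the Thurston measure class (Masur, \cite[Corollary 2]{MR644018}), $\Lambda_x$ is constant a.e., and the constant is $1$ because both measures are probability measures. This rigidity input is what replaces the exact density cancellation you could not supply; without it (or an equivalent substitute), your outline does not close.
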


\subsection{Measures and the action of $\mcg_{g,m}$}
Since the action of $\mcg_{g,m}$
extends continuously to $\Bers{x_0}\cup \partial^{mf}\Bers{x_0}$,
the pushforward measure $[\omega]_*\harmonicmeasure^{x_0}_{x}$
is well-defined for $[\omega]\in \mcg_{g,m}$ and $x\in \teich_{g,m}$
from Corollary \ref{coro:support-PH-measure}.
We first check the following
(see the discussion after \cite[D\'efinition 5.2]{MR881709} and \cite[(5.8) in Th\'eor\`eme 5.4]{MR881709}).

\begin{lemma}[$\mcg_{g,m}$ and PH measure]
\label{lem:mcg-PH}
For $[\omega]\in \mcg_{g,m}$ and $x\in \teich_{g,m}$
\begin{equation*}
[\omega]_*\harmonicmeasure^{x_0}_{x}=
\harmonicmeasure^{x_0}_{[\omega](x)}
\end{equation*}
on $\partial\Bers{x_0}$.
\end{lemma}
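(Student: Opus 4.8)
The plan is to exploit the biholomorphic invariance of Demailly's construction under the mapping class group action, reducing the identity to the invariance of the pluricomplex Green function. First I would recall that $\mcg_{g,m}$ acts on $\teich_{g,m}$ by biholomorphic automorphisms (Royden \cite{MR0288254}), so that under the biholomorphic Bers embedding $\teich_{g,m}\cong\Bers{x_0}$ each $[\omega]$ induces a biholomorphic self-map of the bounded hyperconvex domain $\Bers{x_0}$, which I continue to denote $[\omega]$. By the Krushkal formula \eqref{eq:Green-function} and the $\mcg_{g,m}$-invariance of $d_T$,
$$
g_{\teich_{g,m}}([\omega](x),[\omega](z))=\log\tanh d_T([\omega](x),[\omega](z))=\log\tanh d_T(x,z)=g_{\teich_{g,m}}(x,z).
$$
Writing $u_w=(2\pi)^{-\convgenus}g_{\teich_{g,m}}(w,\,\cdot\,)$ for the PSH exhaustion defining the pluriharmonic measure of $w$, this reads $u_{[\omega](x)}=u_x\circ[\omega]^{-1}$ on $\Bers{x_0}$; in particular $[\omega]$ carries the sublevel sets $B_{u_x}(r)$ and level sets $S_{u_x}(r)$ onto $B_{u_{[\omega](x)}}(r)$ and $S_{u_{[\omega](x)}}(r)$.

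Next I would transport the Lelong--Jensen measures. Since $dd^c$ is natural under biholomorphisms, $[\omega]^*(dd^cV)=dd^c(V\circ[\omega])$ and $[\omega]^*\big((dd^c(u_x\circ[\omega]^{-1}))^{\convgenus}\big)=(dd^cu_x)^{\convgenus}$; applying the change of variables $z=[\omega](w)$ in the Lelong--Jensen formula \eqref{eq:Demailly-LJ-formula} for $\mu_{u_{[\omega](x)},r}$ and using $u_{[\omega](x)}=u_x\circ[\omega]^{-1}$ turns it into the Lelong--Jensen formula for $\mu_{u_x,r}$ evaluated at the test function $V\circ[\omega]$. As these boundary measures on the level sets are determined by their action on plurisubharmonic functions, this yields, for each fixed $r<0$,
$$
\mu_{u_{[\omega](x)},r}=[\omega]_*\mu_{u_x,r}
$$
as Borel measures on $\Bers{x_0}$, both supported on the compact level sets in the interior where $[\omega]$ is a diffeomorphism.

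Finally I would pass to the boundary limit $r\to 0^-$. The $(dd^cu_x)^{\convgenus}$-masses being finite, $\mu_{u_x,r}$ and $\mu_{u_{[\omega](x)},r}$ converge weakly on the ambient space to $\harmonicmeasure^{x_0}_x$ and $\harmonicmeasure^{x_0}_{[\omega](x)}$ respectively. By Corollary \ref{coro:support-PH-measure} both boundary measures are supported on $\partial^{ue}\Bers{x_0}\subset\partial^{mf}\Bers{x_0}$, precisely the locus to which $[\omega]$ extends continuously; since cusps are negligible (Proposition \ref{prop:Bers-APT}), for a continuous $f$ on $\overline{\Bers{x_0}}$ the composition $f\circ[\omega]$ may be regarded as continuous $\harmonicmeasure^{x_0}_x$-almost everywhere on the support. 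Passing to the limit in $\int f\,d\mu_{u_{[\omega](x)},r}=\int f\circ[\omega]\,d\mu_{u_x,r}$ then gives $\int f\,d\harmonicmeasure^{x_0}_{[\omega](x)}=\int f\circ[\omega]\,d\harmonicmeasure^{x_0}_x=\int f\,d\big([\omega]_*\harmonicmeasure^{x_0}_x\big)$, which is the claim. I expect the main obstacle to be exactly this last interchange of pushforward with the weak limit: $[\omega]$ is a priori continuous only on $\Bers{x_0}\cup\partial^{mf}\Bers{x_0}$ and neither on the full Bers boundary nor on the ambient Banach space, so the passage from the interior identity to its boundary counterpart must route entirely through the support statement of Corollary \ref{coro:support-PH-measure} and the negligibility of the cusps.
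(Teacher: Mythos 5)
Your argument coincides with the paper's own proof up to and including the level-set identity: Krushkal's formula together with the $\mcg_{g,m}$-invariance of $d_T$ gives the equivariance of the pluricomplex Green function, and the Lelong--Jensen formula \eqref{eq:Demailly-LJ-formula} turns this into $\int_{S_{g_{[\omega](x)}}(r)}f\,d\mu_{g_{[\omega](x)},r}=\int_{S_{g_{x}}(r)}f\circ[\omega]\,d\mu_{g_{x},r}$ for each $r<0$, which is your pushforward statement. The gap is in the very step you single out at the end: passing this identity to the limit $r\to 0$. Your justification --- that $f\circ[\omega]$ ``may be regarded as continuous $\harmonicmeasure^{x_0}_x$-almost everywhere on the support'' --- is not a hypothesis under which the standard weak-convergence mapping theorem applies. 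That theorem requires the integrand to be continuous, \emph{as a function on the space in which the weak convergence $\mu_{g_x,r}\to\harmonicmeasure^{x_0}_x$ takes place}, at almost every point of the limit measure. Here $[\omega]$ extends continuously only to $\Bers{x_0}\cup\partial^{mf}\Bers{x_0}$; in general it admits no continuous extension to the full Bers boundary (Kerckhoff--Thurston), and cusps are dense in $\partial\Bers{x_0}$, so every point of $\partial^{ue}\Bers{x_0}$ is a limit of boundary points at which any extension of $f\circ[\omega]$ has uncontrolled values. The ambient discontinuity locus of such an extension therefore meets $\partial^{ue}\Bers{x_0}$ in a set of full $\harmonicmeasure^{x_0}_x$-measure, not measure zero, and the theorem you implicitly invoke does not apply. (Continuity \emph{relative to} the subspace $\Bers{x_0}\cup\partial^{mf}\Bers{x_0}$, which carries all the measures involved, would suffice; but that relative form of the mapping theorem is not the standard quotable statement, and proving it requires an Egorov-type uniformization argument that your proposal does not supply.)

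The paper closes exactly this hole with a regularization device. For a test function $f$ (first reduced to a strictly PSH function of class $C^2$ near $\overline{\Bers{x_0}}$, so that Lelong--Jensen applies literally), it defines the upper envelope $f^*(\varphi)=\lim_{\delta\to 0}\sup\{f\circ[\omega](\varphi')\mid\|\varphi'-\varphi\|_\infty<\delta,\ \varphi'\in\Bers{x_0}\}$, where the supremum is taken only over the open slice. This $f^*$ is bounded and genuinely upper semicontinuous on $\overline{\Bers{x_0}}$, and it agrees with $f\circ[\omega]$ on $\Bers{x_0}\cup\partial^{mf}\Bers{x_0}$ by the quoted continuity of the extended action (Bers, Ohshika). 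The one-sided Portmanteau inequality for upper semicontinuous integrands gives $\limsup_{r\to 0}\int f^*\,d\mu_{g_x,r}\le\int f^*\,d\harmonicmeasure^{x_0}_x$, and since cusps are negligible (Proposition \ref{prop:Bers-APT}) the right-hand side equals $\int f\circ[\omega]\,d\harmonicmeasure^{x_0}_x$; combined with the level-set identity this yields $\int f\,d\harmonicmeasure^{x_0}_{[\omega](x)}\le\int f\circ[\omega]\,d\harmonicmeasure^{x_0}_x$, and the lower semicontinuous envelope $f_*$ yields the reverse inequality. This is the missing mechanism; the rest of your proposal is sound and matches the paper.
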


\begin{proof}
We need to show that for any bounded continuous function
$f$ on 
$A_{2}(\mathbb{H}^{*},\Gamma_{0})$,
$x\in \teich_{g,m}$ and $[\omega]\in \mcg_{g,m}$,
\begin{equation*}
\int_{\partial \Bers{x_0}}f\circ [\omega]d\harmonicmeasure^{x_0}_{x}
=\int_{\partial\Bers{x_0}}f\,d\harmonicmeasure^{x_0}_{[\omega](x)}
\end{equation*}
(cf. \cite[Theorem 1.2]{MR1700749}).
We may assume that $f$ is a strictly PSH function of class $C^2$
on a neighborhood of the Bers compactification
(see the proof of \cite[Th\'eor\`eme and D\'efinition 3.1]{MR881709}).
For simplicity,
we set
$g_{x}(y)=(2\pi)^{-1}g_{\teich_{g,m}}(x,y)$
as Theorem \ref{thm:Poisson-integral-formula-sub}.

Since $g_{[\omega](x)}([\omega](y))=g_{x}(y)$,
from the Lelong-Jensen formula \eqref{eq:Demailly-LJ-formula},
\begin{equation}
\int_{S_{g_{[\omega](x)}}(r)}fd\mu_{g_{[\omega](x)},r}
=\int_{S_{g_{x}}(r)}f\circ [\omega]d\mu_{g_{x},r}
\label{eq:base-change-1}
\end{equation}
for $r<0$.
We define a function $f^*$ on $\overline{\Bers{x_0}}$ by
$$
f^*(\varphi)=\lim_{\delta\to 0}
\sup
\{f\circ [\omega](\varphi')\mid \|\varphi'-\varphi\|_\infty<\delta, \varphi'\in \Bers{x_0}\},
$$
where $\|\cdot \|_\infty$ is the hyperbolic supremum norm
on $A_{2}(\mathbb{H}^{*},\Gamma_{0})$.
Then $f^*$ is bounded and upper semicontinuous on $\overline{\Bers{x_0}}$
and satisfies $f^*=f\circ [\omega]$
on $\Bers{x_0}\cup \partial^{mf}\Bers{x_0}$ by virtue of the continuity of $[\omega]$ on $\Bers{x_0}\cup \partial^{mf}\Bers{x_0}$.
Since $\mu_{g_{x},r}$ converges to $\harmonicmeasure^{x_0}_x$
weakly as $r\to 0$ on $A_{2}(\mathbb{H}^{*},\Gamma_{0})$,
from \eqref{eq:base-change-1} and Proposition \ref{prop:Bers-APT},
\begin{align*}
\int_{\partial\Bers{x_0}}f\,d\harmonicmeasure^{x_0}_{[\omega](x)}
&=\limsup_{r\to 0}\int_{S_{g_{[\omega](x)}}(r)}fd\mu_{g_{[\omega](x)},r}
\\
&=\limsup_{r\to 0}\int_{S_{g_{x}}(r)}f\circ [\omega]d\mu_{g_{x},r}
\\
&=\limsup_{r\to 0}\int_{S_{g_{x}}(r)}f^*d\mu_{g_{x},r}
\\
&\le \int_{\partial \Bers{x_0}}f^*d\harmonicmeasure^{x_0}_{x}
=\int_{\partial \Bers{x_0}}f\circ [\omega]\,d\harmonicmeasure^{x_0}_{x}
\end{align*}
(cf. \cite[Theorem 2.1, Problem 2.6 in Chapter 1]{MR1700749}).
Applying the similar argument 
to a bounded lower semicontinuous function
$$
f_*(\varphi)=\lim_{\delta\to 0}
\inf
\{f\circ [\omega](\varphi')\mid \|\varphi'-\varphi\|_\infty<\delta, \varphi'\in \Bers{x_0}\}
$$
on $\overline{\Bers{x_0}}$,
we obtain the reverse inequality.
%
%
%
%
\end{proof}

Next,
we show the following.
\begin{lemma}
\label{lem:Th-points}
For $x\in \teich_{g,m}$ and $[\omega]\in \mcg_{g,m}$,
$$
d\pushThursMBers_{[\omega](x)}=\mathbb{P}(x,[\omega](x),\,\cdot\,)d\pushThursMBers_x
\ \mbox{and} \ \
[\omega]_*(\pushThursMBers_x)=\pushThursMBers_{[\omega](x)}.
$$
\end{lemma}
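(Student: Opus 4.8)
The plan is to deduce both identities from three inputs already in hand: the base-point-change formula \eqref{eq:push-forward-marking_change}, the $\mcg_{g,m}$-invariance of the Thurston measure $\ThursM$ (hence of ${\rm Vol}_{Th}$), and the naturality of extremal length and of the boundary map $\Xi_{x_0}$ under the mapping class group. The key simplification is that for $y=[\omega](x)$ one has ${\rm Vol}_{Th}(y)={\rm Vol}_{Th}(x)$ for free, so the a priori unknown Hubbard--Masur ratio never enters. First I would record the naturality ingredients. From the defining property of the vertical foliation and the action $i([\omega](F),\alpha)=i(F,\omega^{-1}(\alpha))$ one checks $v_{[\omega](x)}(q_{H,x})=[\omega](H)$, so $q_{[\omega](H),[\omega](x)}=q_{H,x}$ and therefore $\ext_{[\omega](x)}([\omega](H))=\ext_x(H)$; combined with $\mathcal{BMF}_{[\omega](x)}=[\omega](\mathcal{BMF}_x)$ and invariance of $\ThursM$ this gives ${\rm Vol}_{Th}([\omega](x))={\rm Vol}_{Th}(x)$. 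I would also record the scaling-equivariance $[\omega](tF)=t[\omega](F)$ and the commutations $\proje_{[\omega](x)}\circ[\omega]=[\omega]\circ\proje_x$ on $\mathcal{SMF}_x$ and $\Xi_{x_0}\circ[\omega]=[\omega]\circ\Xi_{x_0}$. The last of these I would establish by noting, from $q_{[\omega](H),[\omega](x)}=q_{H,x}$, that $[\omega]$ carries the ray $R_{[H],x}$ to $R_{[\omega](H),[\omega](x)}$; Proposition \ref{prop:Teichmuller-limit} together with continuity of the $\mcg_{g,m}$-action on $\Bers{x_0}\cup\partial^{mf}\Bers{x_0}$ then gives the equivariance by passing to the limit.

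For the first identity I would set $y=[\omega](x)$ and use $\proje_y\circ\proje_{x,y}=\proje_x$, so that $\Xi_{x_0}\circ\proje_y\circ\proje_{x,y}=\Xi_{x_0}\circ\proje_x$. Changing variables by $\proje_{x,y}^{-1}$ in the definition \eqref{eq:defintion-pushforward-Th} and applying \eqref{eq:push-forward-marking_change} turns $\int f\,d\pushThursMBers_y$ into $\int_{\mathcal{SMF}_x} f(\Xi_{x_0}\proje_x(G))\,\tfrac{{\rm Vol}_{Th}(x)}{{\rm Vol}_{Th}(y)}\,\ext_y(G)^{-\convgenus}\,d\PThursM^x(G)$, where the volume ratio is $1$ by the previous paragraph. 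It then remains to recognize $\ext_y(G)^{-\convgenus}$ as the Poisson kernel: since $\PThursM^x$ is supported on $\mathcal{SMF}^{ue}_x$, for such $G$ the point $\varphi=\Xi_{x_0}\proje_x(G)$ lies in $\partial^{ue}\Bers{x_0}$ and $F_\varphi$ is a scalar multiple of $G$ (unique ergodicity of $L(G)$), whence the scale-invariant ratio $\ext_x(F_\varphi)/\ext_y(F_\varphi)$ equals $\ext_x(G)/\ext_y(G)=\ext_y(G)^{-1}$ and $\mathbb{P}(x,y,\varphi)=\ext_y(G)^{-\convgenus}$. This yields $d\pushThursMBers_{[\omega](x)}=\mathbb{P}(x,[\omega](x),\,\cdot\,)\,d\pushThursMBers_x$.

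For the second identity I would push $\PThursM^x$ forward by $[\omega]\colon\mathcal{SMF}_x\to\mathcal{SMF}_{[\omega](x)}$. The cone construction together with $[\omega](tF)=t[\omega](F)$, the invariance of $\ThursM$, and ${\rm Vol}_{Th}([\omega](x))={\rm Vol}_{Th}(x)$ gives $[\omega]_*\PThursM^x=\PThursM^{[\omega](x)}$. Feeding this into \eqref{eq:defintion-pushforward-Th} and using $\proje_{[\omega](x)}\circ[\omega]=[\omega]\circ\proje_x$ with the boundary equivariance $\Xi_{x_0}\circ[\omega]=[\omega]\circ\Xi_{x_0}$ identifies $\int f\circ[\omega]\,d\pushThursMBers_x$ with $\int f\,d\pushThursMBers_{[\omega](x)}$, i.e. $[\omega]_*\pushThursMBers_x=\pushThursMBers_{[\omega](x)}$.

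I expect the main obstacle to be the boundary equivariance $\Xi_{x_0}\circ[\omega]=[\omega]\circ\Xi_{x_0}$: it requires the continuous extension of the $\mcg_{g,m}$-action to $\partial^{mf}\Bers{x_0}$ to be compatible with ending laminations, which rests on the ending lamination theorem and on the precise construction of that boundary action. Everything else is bookkeeping with \eqref{eq:push-forward-marking_change}, the naturality of $q_{H,x}$ under the marking change, and the $\mcg_{g,m}$-invariance of the Thurston measure.
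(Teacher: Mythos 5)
Your proof is correct and follows essentially the same route as the paper: the first identity via the change of variables $\proje_{x,[\omega](x)}$, formula \eqref{eq:push-forward-marking_change}, and the invariance ${\rm Vol}_{Th}([\omega](x))={\rm Vol}_{Th}(x)$; the second via the $\mcg_{g,m}$-invariance of $\ThursM$ giving $[\omega]_*\PThursM^x=\PThursM^{[\omega](x)}$, combined with the equivariance of $\Xi_{x_0}$ and $\proje$ under $[\omega]$. The only difference is that you make explicit two steps the paper uses silently — identifying the density $\ext_{[\omega](x)}(G)^{-\convgenus}$ with $\mathbb{P}(x,[\omega](x),\cdot)$ through unique ergodicity of $F_\varphi$, and proving the boundary equivariance $\Xi_{x_0}\circ[\omega]=[\omega]\circ\Xi_{x_0}$ via Teichm\"uller rays and Proposition \ref{prop:Teichmuller-limit} — which is sound and fills genuine gaps in the paper's exposition.
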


\begin{proof}
Since ${\rm Vol}_{Th}([\omega](x))={\rm Vol}_{Th}(x)$,
for any bounded continuous function $f$ on
$A_{2}(\mathbb{H}^{*},\Gamma_{0})$, 
\begin{align*}
\int_{\partial\Bers{x_0}}f\,d\pushThursMBers_{[\omega](x)}
&=
\int_{\mathcal{SMF}^{mf}_{[\omega](x)}}
f\circ (\Xi_{x_0}\circ \proje_{[\omega](x)})d
\PThursM^{[\omega](x)}
\\
&=\int_{\mathcal{SMF}^{mf}_x}f\circ (\Xi_{x_0}\circ \proje_{[\omega](x)}
\circ \proje_{x,[\omega](x)})d
\left((\proje_{x,[\omega](x)}^{-1})_*
\PThursM^{[\omega](x)}
\right)
\\
&=\int_{\mathcal{SMF}^{mf}_x}
f\circ (\Xi_{x_0}\circ \proje_x(G))\frac{1}{\ext_{[\omega](x)}(G)^{\convgenus}}
d\PThursM^x(G)\\
&=
\int_{\partial\Bers{x_0}}f(\varphi)\mathbb{P}(x,[\omega](x),\varphi)\,
d\pushThursMBers_x(\varphi)
\end{align*}
from \eqref{eq:push-forward-marking_change}, where $\proje_x$ and $\proje_{x,[\omega](x)}$ are homeomorphisms defined in \S\ref{subsec:Thurstonmeasure}. This implies the first equation.

Let us prove the second equation.
Any element $[\omega]\in \mcg_{g,m}$
induces a homeomorphism
$$
[\omega]\colon \mathcal{SMF}_x
\ni
G
\to [\omega] (G)
\in \mathcal{SMF}_{[\omega](x)}.
$$
Since the Thurston measure $\ThursM$ is an invariant measure on $\mathcal{MF}$
with respect to the action of $\mcg_{g,m}$,
for a measurable set $E\subset \mathcal{SMF}_{[\omega](x)}$,
\begin{align*}
[\omega]_*\PThursM^x(E) &=\PThursM^x([\omega]^{-1}(E)) \\
&=
\ThursM(\{t G\mid G\in [\omega]^{-1}(E),\ 0\le t\le 1\})/{\rm Vol}_{Th}(x)
\\
&=
\ThursM([\omega]^{-1}(\{t G\mid G\in E,\ 0\le t\le 1\}))/{\rm Vol}_{Th}(x)
\\
&=
\ThursM(\{t G\mid G\in E,\ 0\le t\le 1\})/{\rm Vol}_{Th}([\omega](x))
=\PThursM^{[\omega](x)}(E).
\end{align*}
Therefore we obtain
\begin{align*}
\int_{\partial\Bers{x_0}}f\circ [\omega]\,d\pushThursMBers_x
&=
\int_{\mathcal{SMF}_x}
f\circ [\omega]\circ (\Xi_{x_0}\circ \proje_x)d
\PThursM^x
\\
&=
\int_{\mathcal{SMF}_x}
f\circ (\Xi_{x_0}\circ \proje_{[\omega](x)}\circ [\omega])d
\PThursM^x
\\
&=
\int_{\mathcal{SMF}_{[\omega](x)}}
f\circ (\Xi_{x_0}\circ \proje_{[\omega](x)})
\left(
d([\omega]_*\PThursM^x)
\right)
\\
&=
\int_{\mathcal{SMF}_{[\omega](x)}}
f\circ (\Xi_{x_0}\circ \proje_{[\omega](x)})
d\PThursM^{[\omega](x)}
=\int_{\partial\Bers{x_0}}f\,d\pushThursMBers_{[\omega](x)},
\end{align*}
which implies what we wanted.
%
\end{proof}

\subsection{Proof of Theorem \ref{thm:PHmeasure-is-Thurston-measure}}
Let $x\in \teich_{g,m}$.
From Theorem \ref{thm:support-PH-measure},
there is an integrable function $\Lambda_x$ on $\partial\Bers{x_0}$
such that
$$
d\harmonicmeasure^{x_0}_{x}
=\Lambda_xd\pushThursMBers_{x}
$$
on $\partial\Bers{x_0}$.
For $[\omega]\in \mcg_{g,m}$,
from Theorem \ref{thm:Poisson-kernel},
Lemmas \ref{lem:mcg-PH} and \ref{lem:Th-points},
\begin{align*}
\mathbb{P}(x,[\omega](x),\,\cdot\,)
\Lambda_x d\pushThursMBers_{x}.
&=\mathbb{P}(x,[\omega](x),\,\cdot\,)d\harmonicmeasure^{x_0}_x
=d\harmonicmeasure^{x_0}_{[\omega](x)}
=[\omega]_*d\harmonicmeasure^{x_0}_{x}
\\
&=[\omega]_*(\Lambda_xd\pushThursMBers_{x})
=\Lambda_x\circ[\omega]^{-1}
d\pushThursMBers_{[\omega](x)}
\\
&
=(\Lambda_x\circ[\omega]^{-1})\mathbb{P}(x,[\omega](x),\,\cdot\,)
d\pushThursMBers_{x}.
\end{align*}
Therefore,
we obtain $\Lambda_x\circ[\omega]^{-1}=\Lambda_x$
a.e. on $\partial\Bers{x_0}$ with respect to $\pushThursMBers_{x}$.
Hence, the pullback $\Lambda_x\circ \Xi_x$
is an invariant integrable function on $\mathcal{PMF}$ under the action of $\mcg_{g,m}$.
Since the action of $\mcg_{g,m}$ on $\mathcal{PMF}$
is ergodic with respect to the measure class of $(\proje_{x_0})_*(\PThursM^{x_0})$
(cf. \cite[Corollary 2]{MR644018}),
$\Lambda_x\circ \Xi_x$ is a constant function,
and so is $\Lambda_x$ as a measurable function on $\partial\Bers{x_0}$.
Since both measures $\harmonicmeasure^{x_0}_x$ and
$\pushThursMBers_{x}$
are probability measures on $\partial\Bers{x_0}$,
$\Lambda_x\equiv 1$ a.e. on $\partial\Bers{x_0}$.
\qed
%
%
%
%

\section{Boundary behavior of Poisson integral}
\label{sec:boundary-behaviorPI}
The purpose of this section to prove Theorem \ref{thm:boundary-behavior}.
%

\subsection{Two lemmas}
As \S\ref{subsec:A-covering-partial-Bers},
for $H\in \mathcal{SMF}^{mf}_{x_0}$,
we denote by $\varphi_H\in \partial\Bers{x_0}$
the totally degenerate group whose ending lamination is equal to $L(H)$.

\begin{lemma}
\label{lem:neighborhood-H}
Let $\varphi_H\in \partial^{mf}\Bers{x_0}$.
For $\delta>0$,
we define
$$
N(\varphi_H;\delta)=\{\varphi_G\in \partial^{mf}\Bers{x_0}\mid
i(H,G)<\delta\}.
$$
Then,
$N(\varphi_H;\delta)$ is an open neighborhood of $\varphi_H$ in $\partial^{mf}\Bers{x_0}$
and satisfies
$$
\bigcap_{\delta>0}N(\varphi_H;\delta)=\{\varphi_H\}.
$$
\end{lemma}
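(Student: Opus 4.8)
The plan is to study the map $\Theta:=\Xi_{x_0}\circ\proje_{x_0}\colon \mathcal{SMF}^{mf}_{x_0}\to \partial^{mf}\Bers{x_0}$, which by \S\ref{subsec:BoundarygroupswithoutAPT} and \S\ref{subsec:Thurstonmeasure} is a continuous closed surjection with compact fibers, and whose fiber over a point of $\partial^{ue}\Bers{x_0}$ is a single point (a uniquely ergodic lamination is determined, up to scale, by its support). Since $i(H,\cdot)$ is continuous on $\mathcal{MF}$, the membership condition $i(H,G)<\delta$ makes sense on each fiber; on the full-measure set $\partial^{ue}\Bers{x_0}$ it is literally the value $i(H,G)$, while in general the clean way to read it is through the fiber-maximum
\[
m_H(\varphi)=\max\{\,i(H,G)\mid G\in \mathcal{SMF}^{mf}_{x_0},\ \Theta(G)=\varphi\,\},
\]
the maximum of a continuous function over a compact set, so that $N(\varphi_H;\delta)=\{\varphi\mid m_H(\varphi)<\delta\}$. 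The containment $\varphi_H\in N(\varphi_H;\delta)$ is then immediate: every $G$ in the fiber of $\varphi_H$ has $L(G)=L(H)$, hence $i(H,G)=0<\delta$, and so $m_H(\varphi_H)=0$.

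For openness I would show that $m_H$ is upper semicontinuous, which makes its sublevel sets open. This is exactly where the closedness of $\Xi_{x_0}$ enters: for a closed map, the fibers are upper semicontinuous in the sense that for every open $U\supseteq\Theta^{-1}(\varphi_0)$ there is a neighborhood $W$ of $\varphi_0$ with $\Theta^{-1}(W)\subseteq U$ (take $W=\partial^{mf}\Bers{x_0}\setminus\Theta(\mathcal{SMF}^{mf}_{x_0}\setminus U)$). Given $\varphi_0$ and $\epsilon>0$, I would apply this with $U=\{G\mid i(H,G)<m_H(\varphi_0)+\epsilon\}$, which is open and contains $\Theta^{-1}(\varphi_0)$; the resulting $W$ satisfies $m_H(\varphi)\le m_H(\varphi_0)+\epsilon$ for all $\varphi\in W$, giving upper semicontinuity, and hence that $N(\varphi_H;\delta)$ is open.

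It remains to compute $\bigcap_{\delta>0}N(\varphi_H;\delta)$. The family is decreasing in $\delta$, so the intersection equals $\{\varphi\mid m_H(\varphi)=0\}$. If $m_H(\varphi)=0$, then compactness of the fiber and continuity of $i(H,\cdot)$ produce $G_*\in\Theta^{-1}(\varphi)$ with $i(H,G_*)=0$; since $H$ is filling this forces $L(G_*)\subseteq L(H)$, and since $H$ is minimal and $G_*\neq 0$ we get $L(G_*)=L(H)$, whence $\varphi=\varphi_{G_*}=\varphi_H$. The reverse inclusion was already observed. I expect the main obstacle to be the possible presence of non-uniquely-ergodic minimal filling ending laminations, over which $\Xi_{x_0}$ has positive-dimensional fibers and is \emph{not} an open map, so that the image of the open slab $\{i(H,\cdot)<\delta\}$ need not be open; passing to the fiber-maximum $m_H$ and invoking the closedness of $\Xi_{x_0}$ together with compactness of the fibers is precisely what circumvents this difficulty.
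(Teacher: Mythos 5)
Your proof is correct, and it reaches the openness statement by a genuinely different route than the paper. The paper's own proof factors $\Xi_{x_0}$ through the Gromov boundary of the curve complex: the measure-forgetting map from $\mathcal{PMF}^{mf}$ is a quotient map (Klarreich, Hamenst\"adt) and the induced map onto $\partial^{mf}\Bers{x_0}$ is a homeomorphism (Leininger--Schleimer), so $\Xi_{x_0}\circ\proje_{x_0}$ is a quotient map; openness of $N(\varphi_H;\delta)$ is then deduced from the asserted identity $(\Xi_{x_0}\circ \proje_{x_0})^{-1}(N(\varphi_H;\delta))=\{G\in \mathcal{SMF}^{mf}_{x_0}\mid i(H,G)<\delta\}$, the right-hand side being open by continuity of $i$. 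You avoid the curve-complex factorization entirely and use only what \S\ref{subsec:BoundarygroupswithoutAPT} records --- that $\Xi_{x_0}$ is a closed continuous surjection with compact fibers --- running the standard perfect-map argument to show the fiber-maximum $m_H$ is upper semicontinuous, whence its sublevel sets are open. The difference is not cosmetic: the paper's preimage identity is precisely the claim that the slab $\{G \mid i(H,G)<\delta\}$ is saturated (a union of fibers), and, as your closing paragraph correctly diagnoses, this can fail over non-uniquely-ergodic points, since two transverse measures on the same support may have different intersection numbers with $H$; under the literal ``exists'' reading the preimage of $N(\varphi_H;\delta)$ may be strictly larger than the slab. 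So your argument buys a repair of a step the paper glosses over, at the price of fixing the fiber-maximum (``for all'') reading of the definition --- the reading under which the lemma is robustly true, and which agrees with the paper's on $\partial^{ue}\Bers{x_0}$, where fibers are singletons and where the lemma is subsequently applied (Lemma \ref{lem:Poisson-kernel} and the proof of Theorem \ref{thm:boundary-behavior}). The second assertion, $\bigcap_{\delta>0}N(\varphi_H;\delta)=\{\varphi_H\}$, is handled identically in both proofs: $i(H,G)=0$ forces $L(G)=L(H)$ because $H$ is minimal and filling, hence $\varphi_G=\varphi_H$.
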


\begin{proof}
The mapping $\Xi_{x_0}\colon \mathcal{PMF}^{mf}\to \partial^{mf}\Bers{x_0}$
is factorized as the composition
of the homeomorphism from the Gromov-boundary of the complex of curves
to $\partial^{mf}\Bers{x_0}$
and
the measure-forgetting mapping from $\mathcal{PMF}^{mf}$ to the Gromov boundary
of the complex of curves
(cf. \cite[Theorem 6.6]{MR2582104}).
The measure-forgetting mapping
is the quotient mapping
(cf. \cite{MR2258749} and \cite{Klaereich-boundary}).
Since the intersection number function is continuous,
$(\Xi_{x_0}\circ \proje_{x_0})^{-1}(N(\varphi_H;\delta))=\{G\in \mathcal{SMF}^{mf}_{x_0}\mid i(H,G)<\delta\}$
is open in $\mathcal{SMF}^{mf}_{x_0}$.
Hence, $N(\varphi_H;\delta)$ is an open neighborhood of $\varphi_H$
in $\partial^{mf}\Bers{x_0}$.

Let $\varphi_G\in \bigcap_{\delta>0}N(\varphi_H;\delta)\subset\partial^{mf}\Bers{x_0}$.
Since $i(G,H)<\delta$ for all $\delta>0$, $i(G,H)=0$,
and hence $L(G)=L(H)$ since $H$ is minimal and filling.
Therefore $\varphi_G=\varphi_H$.
\end{proof}

\begin{lemma}
\label{lem:Poisson-kernel}
Let $\delta>0$ and $\varphi_H\in \partial^{ue}\Bers{x_0}$,
there is a neigborhood $U$ of $\varphi_H$ in $A_{2}(\mathbb{H}^{*},\Gamma_{0})$
such that
$$
\sup\{
\mathbb{P}(x_0,x,\varphi)
\mid
\varphi\in \partial^{ue}\Bers{x_0}-
N(\varphi_H;\delta)
\}\le \left(\frac{2}{\delta^{2}}\right)^{\convgenus}
e^{-2\convgenus d_T(x_0,x)}
$$
for $x\in \Bers{x_0}\cap U$.
\end{lemma}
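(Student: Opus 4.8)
The plan is to rewrite the Poisson kernel \eqref{eq:Poisson-kernel} entirely in terms of the continuous function $i_{x_0}$ of extremal length geometry on $\cl{\teich_{g,m}}\times\cl{\teich_{g,m}}$, and to exploit that at the uniquely ergodic limit point the required lower bound is governed by the geometric intersection number. Throughout I normalise so that $H\in\mathcal{SMF}_{x_0}$, and I represent each $\varphi\in\partial^{ue}\Bers{x_0}-N(\varphi_H;\delta)$ by the unique $G\in\mathcal{SMF}_{x_0}$ with $L(F_\varphi)=L(G)$; since $G$ is uniquely ergodic we have $[F_\varphi]=[G]$, and since the kernel \eqref{eq:Poisson-kernel} is invariant under scaling of $F_\varphi$ I may use $G$ in place of $F_\varphi$. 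By the definition of $N(\varphi_H;\delta)$ (Lemma \ref{lem:neighborhood-H}), such a $G$ satisfies $i(H,G)\ge\delta$.

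First I would record the two identities I need. Because $\ext_{x_0}(G)=1$, formula \eqref{eq:ex-geo2} gives
\begin{equation*}
i_{x_0}(x,[G])=e^{-d_T(x_0,x)}\ext_x(G)^{1/2},\qquad\text{hence}\qquad \ext_x(G)=e^{2d_T(x_0,x)}\,i_{x_0}(x,[G])^{2},
\end{equation*}
so that
\begin{equation*}
\mathbb{P}(x_0,x,\varphi)=\left(\frac{\ext_{x_0}(G)}{\ext_x(G)}\right)^{\convgenus}=e^{-2\convgenus d_T(x_0,x)}\,i_{x_0}(x,[G])^{-2\convgenus}.
\end{equation*}
Thus the asserted estimate is equivalent to the uniform lower bound $i_{x_0}(x,[G])\ge\delta/\sqrt{2}$ over all admissible $G$ and all $x$ in a suitable neighbourhood. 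At the limit point itself this bound is sharp and essentially free: since $\ext_{x_0}(H)=\ext_{x_0}(G)=1$, formula \eqref{eq:ex-geo3} yields $i_{x_0}([H],[G])=i(H,G)\ge\delta>\delta/\sqrt{2}$, so the factor $\sqrt{2}$ of slack (which is exactly what the constant $2$ in the statement accommodates) leaves room for a continuity argument.

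The remaining task is to promote this pointwise bound to a uniform one on a neighbourhood. I would set $\mathcal{K}_\delta=\{[G]\in\mathcal{PMF}\mid G\in\mathcal{SMF}_{x_0},\ i(H,G)\ge\delta\}$, a closed, hence compact, subset of $\mathcal{PMF}\subset\partial_{GM}\teich_{g,m}$. The function $i_{x_0}$ is continuous on $\cl{\teich_{g,m}}\times\cl{\teich_{g,m}}$ and is $\ge\delta$ on $\{[H]\}\times\mathcal{K}_\delta$; by a tube-lemma argument (uniform continuity on the compact factor $\mathcal{K}_\delta$) there is a neighbourhood $W$ of $[H]$ in $\cl{\teich_{g,m}}$ with $i_{x_0}(x,[G])\ge\delta/\sqrt{2}$ for every $x\in W\cap\teich_{g,m}$ and every $[G]\in\mathcal{K}_\delta$. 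Finally I would convert $W$ into the required neighbourhood $U$ in $A_2(\mathbb{H}^{*},\Gamma_0)$: by Claim \ref{claim:3-1} every sequence in $\Bers{x_0}$ converging to $\varphi_H$ converges to $[H]$ in the Gardiner-Masur compactification, and since both compactifications are metrizable this forces some Bers-neighbourhood $U$ of $\varphi_H$ to satisfy $U\cap\Bers{x_0}\subset W$ under the identification $\Bers{x_0}\cong\teich_{g,m}$ (otherwise a countable neighbourhood basis would produce a sequence $x_k\to\varphi_H$ in $\overline{\Bers{x_0}}$ whose Gardiner-Masur images avoid $W$, contradicting Claim \ref{claim:3-1}). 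Combining the displayed formula with the uniform bound gives $\mathbb{P}(x_0,x,\varphi)\le e^{-2\convgenus d_T(x_0,x)}(\sqrt{2}/\delta)^{2\convgenus}=(2/\delta^{2})^{\convgenus}e^{-2\convgenus d_T(x_0,x)}$ for all $x\in U\cap\Bers{x_0}$.

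The main obstacle is the passage from a Gardiner-Masur neighbourhood to a Bers neighbourhood in the last step: the two boundary structures are genuinely different, and it is precisely the uniquely ergodic hypothesis on $\varphi_H$ that makes Claim \ref{claim:3-1} available and hence makes the two topologies compatible near $\varphi_H$. An alternative, more hands-on route to the lower bound $\ext_x(G)\ge(\delta^{2}/2)\,e^{2d_T(x_0,x)}$ combines Minsky's inequality \eqref{eq:minsky-inequality}, which gives $\ext_x(G)\ge i(H,G)^{2}/\ext_x(H)\ge\delta^{2}/\ext_x(H)$, with the estimate $\ext_x(H)\le 2e^{-2d_T(x_0,x)}$ near $\varphi_H$; but establishing the latter again requires the near-radial approach encoded in Claim \ref{claim:3-1}, so the intersection-number formulation above is the more economical one.
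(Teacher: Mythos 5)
Your proof is correct and follows essentially the same route as the paper: the paper's own argument likewise reduces the kernel estimate to the uniform lower bound $e^{-2d_T(x_0,x)}\ext_x(G)\ge \delta^2/2$ and derives it from \eqref{eq:ex-geo2}, \eqref{eq:ex-geo3}, the joint continuity of $i_{x_0}$ on the Gardiner--Masur compactification, compactness of $\mathcal{SMF}_{x_0}$, and Claim \ref{claim:3-1} -- exactly the ingredients you invoke. The only difference is organizational: the paper runs a single sequential contradiction (a sequence $x_n\to\varphi_H$ in $\overline{\Bers{x_0}}$ together with $G_n\to G_0$ violating the bound), whereas you separate the uniformity (tube lemma over the compact set $\mathcal{K}_\delta$) from the comparison of the Bers and Gardiner--Masur topologies near $\varphi_H$.
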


\begin{proof}
We first claim that
$$
U'=\{x\in \teich_{g,m}\mid 
\mbox{$e^{-2d_T(x_0,x)}\ext_{x}(G)\ge \delta^2/2$ for $G\in \mathcal{SMF}^{ue}_{x_0}$ with $i(H,G)\ge \delta$}\}
$$
satisfies that $U\cap \Bers{x_0}\subset U'$ for some neighborhood $U$ of $\varphi_H$ in 
is a neighborhood of $\varphi_H$ in the sense that there is a neighborhood $U$ of $A_{2}(\mathbb{H}^{*},\Gamma_{0})$.

Otherwise, there is a sequence $\{x_n\}_{n=1}^\infty$ in $\teich_{g,m}$ converging to $\varphi_H$ and $\{G_n\}_n\subset\mathcal{SMF}^{ue}_{x_0}$ such that $e^{-2d_T(x_0,x_n)}\ext_{x_n}(G_n)<\delta^2/2$ for some $G_n\in \mathcal{SMF}^{ue}_{x_0}$ with $i(G_n,H)\ge \delta$. We may assume that $G_n$ converges to some $G_0\in \mathcal{SMF}_{x_0}$. Since the intersection number is continuous, $i(G_0,H)\ge \delta$.
From Claim \ref{claim:3-1} in Theorem \ref{thm:Poisson-kernel},
\eqref{eq:ex-geo2} and \eqref{eq:ex-geo3},
\begin{align*}
\label{eq:extremal_length_ue}
\delta^2/2 &> e^{-2d_T(x_0,x_n)}\ext_{x_n}(G_n)
=i_{x_0}(x_n,[G_n]) \\
&\to i_{x_0}([H],[G_0])=
\frac{i(G_0,H)^2}{\ext_{x_0}(G_0)\ext_{x_0}(H)}
=i(G_0,H)^2\ge \delta^2
\nonumber
\end{align*}
as $n\to \infty$, which is a contradiction.

%
%
%

We show that the open neighborhood $U$ which is taken above satisfies the desired condition.
Indeed, for $x\in U'$, we deduce
$$
\mathbb{P}(x_0,x,\varphi_G)=
\left(\frac{\ext_{x_0}(G)}{\ext_{x}(G)}
\right)^{\convgenus}
\le
\left(\frac{2}{\delta^{2}}\right)^{\convgenus}
e^{-2\convgenus d_T(x_0,x)}.
$$
Since the right-hand side is independent of $\varphi_G\in \partial^{ue}\Bers{x_0}
-
N(\varphi_H;\delta)$,
we have the assertion.
\end{proof}

\subsection{Proof of Theorem \ref{thm:boundary-behavior}}
We prove Theorem \ref{thm:boundary-behavior} with a weaker assumption.
Suppose $V$ is integrable on $\partial\Bers{x_0}$ and the restriction of $V$ to $\partial^{mf}\Bers{x_0}$ is continuous at $\varphi_0\in \partial^{ue}\Bers{x_0}$.

Fix $\epsilon>0$.
From Lemma \ref{lem:neighborhood-H},
there is $\delta>0$
such that $|V(\varphi)-V(\varphi_0)|<\epsilon$
for $\varphi\in N(\varphi_H;\delta)$.
Since
$d\pushThursMBers_{x}=\mathbb{P}(x_0,x,\cdot)
d\pushThursMBers_{x_0}$ is a probability measure on $\partial\Bers{x_0}$
for $x\in \Bers{x_0}$,
\begin{equation}
\label{eq:bdy-beh-1}
\int_{N(\varphi_H;\delta)}
|V(\varphi)-V(\varphi_0)|
\mathbb{P}(x_0,x,\varphi)
d\pushThursMBers_{x_0}(\varphi)<\epsilon.
\end{equation}
Since $V$ is integrable on $\partial \Bers{x_0}$
and $\partial^{ue}\Bers{x_0}$ is of full measure in $\partial\Bers{x_0}$
with respect to $\pushThursMBers_{x_0}$ (Corollary \ref{coro:support-PH-measure}),
from Lemma \ref{lem:Poisson-kernel},
there is a neighborhood $U$ of $\varphi_H$ in $A_{2}(\mathbb{H}^{*},\Gamma_{0})$ such that 
\begin{equation}
\label{eq:bdy-beh-2}
\int_{\partial \Bers{x_0}-N(\varphi_H;\delta)}
|V(\varphi)-V(\varphi_0)|
\mathbb{P}(x_0,x,\varphi)
d\pushThursMBers_{x_0}(\varphi)
\le Me^{-2\convgenus d_T(x_0,x)}
\end{equation}
for $x\in \Bers{x_0}\cap U$,
where $M>0$ depends only on $V$, $\varphi_0$ and $\delta$.
From \eqref{eq:bdy-beh-1} and \eqref{eq:bdy-beh-2},
we conclude
\begin{align*}
&\left|\int_{\partial \Bers{x_0}}V(\varphi)
\mathbb{P}(x_0,x,\varphi)
d\pushThursMBers_{x_0}(\varphi)-
V(\varphi_0)
\right|
\\
&\le\int_{\partial \Bers{x_0}}
|V(\varphi)-V(\varphi_0)|
\mathbb{P}(x_0,x,\varphi)
d\pushThursMBers_{x_0}(\varphi)
\le \epsilon+Me^{-2\convgenus d_T(x_0,x)}
\end{align*}
for $x\in\Bers{x_0}\cap U$.
\qed


\section{Averaging on $\mathcal{PMF}$}
\label{sec:averaging-pmf}
We discuss
on the integral representation from the topological point of view.

\subsection{Integral representation with $\mathcal{PMF}$}
\label{subsec:integral-representation-with-pmf}
We identify $\mathcal{SMF}_{x_0}$ with $\mathcal{PMF}$  as \S\ref{subsec:Thurstonmeasure}.
We think of $\PThursM^{x_0}$
as a Borel measure on $\mathcal{PMF}$ under the identification.
We define a linear operator (isometry)
$$
L^1(\partial \Bers{x_0},\pushThursMBers_{x_0})
\ni V\mapsto \hat{V}=
V\circ \Xi_{x_0}\in L^1(\mathcal{PMF},\PThursM^{x_0}).
$$
The following is an immediate consequence from
Theorem \ref{thm:Poisson-integral-formula}.

\begin{corollary}[Integral representation with $\mathcal{PMF}$]
\label{coro:integral-representation-PMF}
Let $V$ be a pluriharmonic function on $\teich_{g,m}$ which is continuous on the Bers closure.
Then,
\begin{equation}
\label{eq:integral-representation-PMF}
V(x)=\int_{\mathcal{PMF}}\hat{V}([F])
\left(\sqrt{\frac{\ext_{x_0}(F)}{\ext_x(F)}}\right)^{2\convgenus}
d\PThursM^{x_0}([F])
\end{equation}
for $x\in \teich_{g,m}$.
\end{corollary}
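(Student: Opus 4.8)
The plan is to obtain \eqref{eq:integral-representation-PMF} as a direct transcription of the Poisson integral formula (Theorem \ref{thm:Poisson-integral-formula}) via the defining change of variables \eqref{eq:defintion-pushforward-Th} for the pushforward measure $\pushThursMBers_{x_0}$. Concretely, I would start from
\[
V(x)=\int_{\partial \Bers{x_0}}V(\varphi)\,\mathbb{P}(x_0,x,\varphi)\,d\pushThursMBers_{x_0}(\varphi)
\]
and apply \eqref{eq:defintion-pushforward-Th} with $f(\varphi)=V(\varphi)\mathbb{P}(x_0,x,\varphi)$. Since $\pushThursMBers_{x_0}=(\Xi_{x_0}\circ\proje_{x_0})_*(\PThursM^{x_0})$ is a genuine Borel measure, this change of variables is valid for any $\pushThursMBers_{x_0}$-integrable $f$, not only continuous ones, so no extra regularity of $\mathbb{P}(x_0,x,\cdot)$ is required.

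First I would pull the integrand back along $\Xi_{x_0}\circ\proje_{x_0}$. By definition $V\circ\Xi_{x_0}=\hat{V}$, so the first factor becomes $\hat{V}([G])$ at $G\in\mathcal{SMF}_{x_0}$. For the kernel, I would use that $\mathcal{SMF}^{ue}_{x_0}$ has full $\PThursM^{x_0}$-measure (Masur, cf. \S\ref{subsec:pushforwardmeasure-Bers-boundary}), so it suffices to evaluate at uniquely ergodic $G$. For such $G$ the point $\varphi=\Xi_{x_0}([G])$ lies in $\partial^{ue}\Bers{x_0}$ with ending lamination $L(G)$; since $G$ is uniquely ergodic, the corresponding measured foliation satisfies $F_\varphi=cG$ for some $c>0$. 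Because the extremal length is homogeneous of degree two, the factor $c^2$ cancels in the ratio and
\[
\mathbb{P}(x_0,x,\Xi_{x_0}([G]))=\left(\frac{\ext_{x_0}(F_\varphi)}{\ext_x(F_\varphi)}\right)^{\convgenus}=\left(\frac{\ext_{x_0}(G)}{\ext_x(G)}\right)^{\convgenus}.
\]
Substituting, identifying $\mathcal{SMF}_{x_0}$ with $\mathcal{PMF}$ via $\proje_{x_0}$ as in \S\ref{subsec:Thurstonmeasure}, and writing the exponent $\convgenus$ as $\left(\sqrt{\,\cdot\,}\right)^{2\convgenus}$, yields \eqref{eq:integral-representation-PMF}.

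The only point requiring genuine care — rather than a real obstacle, since the statement is an immediate corollary — is the well-definedness of the kernel on $\mathcal{PMF}$ together with its almost-everywhere agreement with the pulled-back Poisson kernel. I would note that $(\ext_{x_0}(F)/\ext_x(F))^{\convgenus}$ is invariant under $F\mapsto tF$, hence descends to a function on $\mathcal{PMF}$, and that this homogeneity, combined with unique ergodicity, is exactly what makes $\mathbb{P}(x_0,x,\Xi_{x_0}([F]))$ independent of the chosen representative $F$ of $[F]$. Evaluating at the representative $G\in\mathcal{SMF}_{x_0}$, where $\ext_{x_0}(G)=1$, then matches the scale-invariant form displayed in \eqref{eq:integral-representation-PMF}.
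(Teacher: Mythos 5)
Your proposal is correct and follows essentially the same route as the paper, which simply declares the corollary ``an immediate consequence'' of Theorem \ref{thm:Poisson-integral-formula} via the definition \eqref{eq:defintion-pushforward-Th} of $\pushThursMBers_{x_0}$ as a pushforward measure and the identification $V\mapsto\hat V=V\circ\Xi_{x_0}$. You merely make explicit the routine points the paper leaves implicit — extending the change of variables from continuous to integrable integrands, restricting to the full-measure set $\mathcal{SMF}^{ue}_{x_0}$, and using the degree-two homogeneity of extremal length so that $\mathbb{P}(x_0,x,\Xi_{x_0}([G]))$ is independent of the representative — all of which are exactly the right justifications.
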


\begin{remark}
\label{remark:PS-measure}
The family of measures
$$
\left\{
\left(\sqrt{\frac{\ext_{x_0}(\cdot )}{\ext_x(\cdot )}}\right)^{2\convgenus}d\PThursM^{x_0}
\right\}_{x\in \teich_{g,m}}
$$
on $\mathcal{PMF}$ which are
appeared in the right-hand side \eqref{eq:integral-representation-PMF}
is already discussed in \cite[\S2.3.1]{MR2913101},
and recognized as the conformal density of dimension $2\convgenus=6g-6+2m$
(the Patterson-Sullivan measures) on $\teich_{g,m}$
from the dynamical point of view
(see also \cite{MR2495764}).
\end{remark}

We prove Mirzakhani and Dumas' observation in \cite{MR3413977}
by using the formulation as in Corollary \ref{coro:integral-representation-PMF}
as follows.
\begin{corollary}[Mirzakhani and Dumas \cite{MR3413977}]
\label{coro:Hubbard-Masur-constant}
The Hubbard-Masur function \eqref{eq:HM-fns} is constant.
\end{corollary}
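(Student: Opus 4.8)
The plan is to apply the Poisson integral formula (Theorem~\ref{thm:Poisson-integral-formula}), or rather its reformulation on $\mathcal{PMF}$ in Corollary~\ref{coro:integral-representation-PMF}, to the constant function $V\equiv 1$, and then to match the resulting identity against the base-point–change formula \eqref{eq:push-forward-marking_change}. The constant function $1$ is trivially continuous on $\overline{\Bers{x_0}}$ and pluriharmonic on $\Bers{x_0}\cong\teich_{g,m}$, so Corollary~\ref{coro:integral-representation-PMF} applies with $\hat V\equiv 1$.

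First I would record what Corollary~\ref{coro:integral-representation-PMF} says in this case. Choosing for each $[F]\in\mathcal{PMF}$ the representative $F\in\mathcal{SMF}_{x_0}$ (so that $\ext_{x_0}(F)=1$), the projectively invariant integrand $(\ext_{x_0}(F)/\ext_x(F))^{\convgenus}$ becomes $\ext_x(F)^{-\convgenus}$, and the corollary reads
\begin{equation}
\label{eq:hm-star}
1=\int_{\mathcal{SMF}_{x_0}}\frac{1}{\ext_x(F)^{\convgenus}}\,d\PThursM^{x_0}(F)
\qquad(x\in\teich_{g,m}).
\end{equation}
On the other hand, \eqref{eq:push-forward-marking_change} is a pure consequence of the cone construction and the scaling law \eqref{eq:multple-TH-measure}; evaluating it with base points $x_0$ and $x$ on the whole sphere $E=\mathcal{SMF}_{x_0}$, where the left-hand side equals $\PThursM^{x}(\mathcal{SMF}_x)=1$ since $\proje_{x_0,x}$ is a homeomorphism onto $\mathcal{SMF}_x$, gives
\begin{equation}
\label{eq:hm-starstar}
1=\frac{{\rm Vol}_{Th}(x_0)}{{\rm Vol}_{Th}(x)}\int_{\mathcal{SMF}_{x_0}}\frac{1}{\ext_x(F)^{\convgenus}}\,d\PThursM^{x_0}(F).
\end{equation}
Substituting \eqref{eq:hm-star} into \eqref{eq:hm-starstar} yields ${\rm Vol}_{Th}(x_0)={\rm Vol}_{Th}(x)$, and since $x\in\teich_{g,m}$ is arbitrary the Hubbard--Masur function \eqref{eq:HM-fns} is constant.

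The only point that requires care --- and the conceptual heart of the argument --- is to confirm that no circularity has crept in. The identity \eqref{eq:push-forward-marking_change}, and hence \eqref{eq:hm-starstar}, retains the ratio ${\rm Vol}_{Th}(x_0)/{\rm Vol}_{Th}(x)$ precisely because, as emphasized in \S\ref{subsec:Thurstonmeasure}, we never assumed Dumas' result in establishing the Poisson formula. Thus the genuinely new information is that the integral in \eqref{eq:hm-star} equals $1$ exactly, with no volume factor; this is exactly the content of Theorem~\ref{thm:Poisson-integral-formula} applied to $V\equiv 1$, which was derived through Theorem~\ref{thm:PHmeasure-is-Thurston-measure} independently of the constancy of ${\rm Vol}_{Th}$. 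I would also double-check the routine projective-invariance bookkeeping: because $\ext_x(tF)=t^2\ext_x(F)$, the integrand of Corollary~\ref{coro:integral-representation-PMF} descends to $\mathcal{PMF}$, so normalizing representatives to lie on $\mathcal{SMF}_{x_0}$ is legitimate and produces \eqref{eq:hm-star}.
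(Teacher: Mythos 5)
Your proof is correct and follows essentially the same route as the paper: both apply the Poisson integral formula (via Corollary~\ref{coro:integral-representation-PMF}) to $V\equiv 1$ and then identify $\int_{\mathcal{SMF}_{x_0}}\ext_x(F)^{-\convgenus}\,d\PThursM^{x_0}$ with the volume ratio ${\rm Vol}_{Th}(x)/{\rm Vol}_{Th}(x_0)$ using the scaling law \eqref{eq:multple-TH-measure}. The only difference is bookkeeping: the paper re-derives this identity by the radial rescaling homeomorphism $\mathcal{BMF}_{x_0}\to\mathcal{BMF}_x$, whereas you invoke the base-point-change formula \eqref{eq:push-forward-marking_change}, which packages exactly that computation.
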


\begin{proof}
Fix $x_0\in \teich_{g,m}$. Let $x\in \teich_{g,m}$.
By applying $V\equiv 1$ on $A_{2}(\mathbb{H}^{*},\Gamma_{0})$ to Corollary \ref{coro:integral-representation-PMF}, we obtain
\begin{align}
1
&=
\int_{\mathcal{PMF}}\left(\frac{\ext_{x_0}(F)}{\ext_x(F)}\right)^{\convgenus}d\PThursM^{x_0}([F])
=\int_{\mathcal{SMF}_{x_0}}\left(\frac{\ext_{x_0}(F)}{\ext_x(F)}\right)^{\convgenus}d\PThursM^{x_0}(F)
\label{eq:Hubbard-Masur-constant1}
\\
&
=\frac{1}{{\rm Vol}_{Th}(x_0)}
\int_{\mathcal{BMF}_{x_0}}\left(\frac{\ext_{x_0}(F)}{\ext_x(F)}\right)^{\convgenus}d\ThursM(F)
\nonumber
\end{align}
for $x\in \teich_{g,m}$.
On the other hand,
the mapping
$$
\mathcal{BMF}_{x_0}\ni F\mapsto 
\sqrt{\frac{\ext_{x_0}(F)}{\ext_x(F)}}
F\in \mathcal{BMF}_x
$$
is homeomorphic (the origin is sent to the origin).
Therefore,
the last term of \eqref{eq:Hubbard-Masur-constant1}
coincides with ${\rm Vol}_{Th}(x)/{\rm Vol}_{Th}(x_0)$.
\end{proof}

From \eqref{eq:push-forward-marking_change} and Corollary \ref{coro:Hubbard-Masur-constant}, we also obtain the following.

\begin{corollary}
\label{coro:pull-back-thurston-measure}
For $x,y\in \teich_{g,m}$, after identifying $\mathcal{SMF}_x$ and $\mathcal{SMF}_y$ with $\mathcal{PMF}$ as \S\ref{subsec:Thurstonmeasure}, we have
\begin{equation*}
\PThursM^{y}(E)=\int_E\left(\frac{\ext_{x}(F)}{\ext_{y}(F)}\right)^{\convgenus}d\PThursM^{x}([F])
\end{equation*}
for any measurable set $E\subset \mathcal{PMF}$.
\end{corollary}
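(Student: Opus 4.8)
The plan is to read the identity directly off the base-point change formula \eqref{eq:push-forward-marking_change} once the Hubbard--Masur function is known to be constant. First I would recall that the homeomorphism $\proje_{x,y}\colon \mathcal{SMF}_x\to\mathcal{SMF}_y$ sends $G$ to the projectively equivalent foliation $G/\ext_y(G)^{1/2}$, so that $\proje_y\circ\proje_{x,y}=\proje_x$. Hence, under the identifications $\proje_x\colon\mathcal{SMF}_x\to\mathcal{PMF}$ and $\proje_y\colon\mathcal{SMF}_y\to\mathcal{PMF}$, the map $\proje_{x,y}$ becomes the identity on $\mathcal{PMF}$. Consequently, for a measurable set $E\subset\mathcal{PMF}$, evaluating the measure $\PThursM^y$ (read on $\mathcal{PMF}$ via $\proje_y$) on $E$ amounts to evaluating $\PThursM^y$ on $\proje_{x,y}(\proje_x^{-1}(E))=\proje_y^{-1}(E)$.

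Next I would invoke Corollary \ref{coro:Hubbard-Masur-constant}, which gives ${\rm Vol}_{Th}(x)={\rm Vol}_{Th}(y)$, so that the scalar factor ${\rm Vol}_{Th}(x)/{\rm Vol}_{Th}(y)$ appearing in \eqref{eq:push-forward-marking_change} equals $1$. Applying \eqref{eq:push-forward-marking_change} to the set $\proje_x^{-1}(E)\subset\mathcal{SMF}_x$ then yields
\[
\PThursM^{y}(E)=\int_{\proje_x^{-1}(E)}\frac{1}{\ext_y(F)^{\convgenus}}\,d\PThursM^{x}(F),
\]
where on the left $\PThursM^{y}$ is interpreted as a measure on $\mathcal{PMF}$.

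Finally, since every representative $F\in\mathcal{SMF}_x$ satisfies $\ext_x(F)=1$ by the very definition of the unit sphere, I would rewrite the density $1/\ext_y(F)^{\convgenus}$ as $\left(\ext_x(F)/\ext_y(F)\right)^{\convgenus}$. Because $\ext_{\bullet}$ is homogeneous of degree two in its foliation argument, this ratio is projectively invariant, and so $\left(\ext_x(F)/\ext_y(F)\right)^{\convgenus}$ descends to a well-defined function on $\mathcal{PMF}$. Translating the integral over $\mathcal{SMF}_x$ into an integral over $\mathcal{PMF}$ against $\PThursM^{x}$ then produces precisely the asserted formula.

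I do not expect a genuine obstacle in this argument: the entire content is carried by the base-point change formula \eqref{eq:push-forward-marking_change} and the constancy of the Hubbard--Masur function from Corollary \ref{coro:Hubbard-Masur-constant}. The only care needed is the bookkeeping of the three identifications of $\mathcal{SMF}_x$, $\mathcal{SMF}_y$ and $\mathcal{PMF}$, together with the observation that the integrand $\left(\ext_x(F)/\ext_y(F)\right)^{\convgenus}$ is projectively well-defined.
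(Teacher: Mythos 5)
Your proposal is correct and follows exactly the paper's route: the paper derives this corollary precisely by combining the base-point change formula \eqref{eq:push-forward-marking_change} with the constancy of the Hubbard--Masur function (Corollary \ref{coro:Hubbard-Masur-constant}), which is what you do. Your additional bookkeeping of the identifications $\proje_x$, $\proje_y$, $\proje_{x,y}$ and the projective invariance of the density $\left(\ext_x(F)/\ext_y(F)\right)^{\convgenus}$ is exactly the implicit content the paper leaves to the reader.
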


\subsection{Phenomena by averaging}
\label{subsec:phenomena-by-averaging}


 In this section, we discuss the averaging procedure from the Poisson integral formula \eqref{eq:main-Poisson-integral-formula}. 
 
\subsubsection{Presentation of differentials by averaging}
\label{subsubsec:presentation-differential}
Let $V$ be a pluriharmonic function on $\Bers{x_0}$ which is continuous on the Bers closure.
We identify the holomorphic cotangent bundle over $\teich_{g,m}$ with the space of holomorphic quadratic differentials as \S\ref{subsec:infinitesimal-Teich}. The following formula is deduced by the differentiating the both sides of \eqref{eq:integral-representation-PMF}:
\begin{align}
(\partial V)_x&=\convgenus \int_{\mathcal{PMF}}\hat{V}([F])
\left(\frac{\ext_{x_0}(F)}{\ext_x(F)}\right)^{\convgenus}
\frac{q_{F,x}}{\|q_{F,x}\|}
\,
d\PThursM^{x_0}([F])
\label{eq:Diff-holo-functions}
\\
&=\convgenus \int_{\mathcal{PMF}}\hat{V}([F])\frac{q_{F,x}}{\|q_{F,x}\|}\,d\PThursM^{x}([F]) 
\nonumber\\
(\overline{\partial}V)_x&=\convgenus \int_{\mathcal{PMF}}\hat{V}([F])
\left(\frac{\ext_{x_0}(F)}{\ext_x(F)}\right)^{\convgenus}
\frac{\overline{q_{F,x}}}{\|q_{F,x}\|}\,d\PThursM^{x_0}([F])
\label{eq:Diff-holo-functions2}
\\
&=\convgenus \int_{\mathcal{PMF}}\hat{V}([F])\frac{\overline{q_{F,x}}}{\|q_{F,x}\|}\,d\PThursM^{x}([F])
\nonumber
\end{align}
for $x\in \teich_{g,m}$ from Gardiner's formula (\cite{MR736212}) and Corollary \ref{coro:pull-back-thurston-measure}
since $\|q_{F,x}\|=\ext_x(F)$. 
Equation \eqref{eq:Diff-holo-functions2} is deduced from the equation $\overline{\partial}F=\overline{\partial \overline{F}}$ for a $C^1$-function $F$.
Equations \eqref{eq:Diff-holo-functions} and \eqref{eq:Diff-holo-functions2} mean that the $\partial$ and $\overline{\partial}$-differentials are obtained by averaging the boundary value with the vector-valued (quadratic differential-valued) measures
$$
\left\{\convgenus\frac{q_{\,\cdot\,,x}}{\|q_{\,\cdot\,,x}\|}\,d\PThursM^{x},\
\convgenus\frac{\overline{q_{\,\cdot\,,x}}}{\|q_{\,\cdot\,,x}\|}\,d\PThursM^{x}\right\}_{x\in \teich_{g,m}}.
$$
Thus, for $\hat{V}\in L^1(\mathcal{PMF},\PThursM^{x_0})$,
the homogeneous tangential Cauchy-Riemann equation \eqref{eq:CR}
is rephrased as 
\begin{equation}
\label{eq:remark-CR}
\int_{\mathcal{PMF}}\hat{V}([F])
\frac{\overline{
q_{F,x}}}{\|q_{F,x}\|}
\,
d\PThursM^{x}([F])=0\quad (x\in \teich_{g,m}).
\end{equation}
%

\subsubsection{Differentials for lengths of hyperbolic geodesics}
\label{subsubsec:Wolpert_differentials}
We give an application of \eqref{eq:Diff-holo-functions}.
We use the notation defined in \S\ref{subsec:Bers-slice} and \S\ref{subsec:A-covering-partial-Bers} frequently.

For $\gamma\in \pi_1(\Sigma_{g,m})$, denote by $\ell_\gamma(x)={\rm leng}_x(\gamma)$ the hyperbolic length of the hyperbolic geodesic on a marked Riemann surface $x$ in the class $\gamma$ (cf. \S\ref{subsec:extremal-length}). Wolpert discussed a Petersson series which defines a holomorphic quadratic differential $\Theta_{\gamma,x}\in \mathcal{Q}_x$ satisfying that 
\begin{equation}
\label{eq:Wolpert}
d\ell_\gamma[v]=
{\rm Re}\langle v,\Theta_{\gamma,x}\rangle
\end{equation}
for $v\in T_x\teich_{g,m}$ which follows from the Gardiner variational formula. The quadratic differential $\Theta_{\gamma,x}$ is a fundamental object in the Weil-Petersson geometry (e.g. \cite[\S7, \S8]{MR1215481} and \cite[Chapter 3]{MR2641916}). 
\begin{proof}[Proof of Theorem \ref{thm:Wolpert-Hubbard-Masur}]
For $x\in \teich_{g,m}$ and $y\in \teich_{g,m}\cup \mathcal{PMF}^{mf}$, we denote by $\rho_{y,x}=\rho_{\varphi_{y}}$ where $\varphi_{y}\in \overline{\Bers{x}}$ is the corresponding differential to $y$ (cf. \S\ref{subsec:BoundarygroupswithoutAPT}). 
%

Let $v\in T_{x}\teich_{g,m}$. From \eqref{eq:Diff-holo-functions}, we have
$$
\left(\partial {\rm tr}^2\rho_{\varphi_{\cdot,x_0}}(\gamma)\right)[v]=\convgenus\int_{\mathcal{PMF}^{mf}}{\rm tr}^2\rho_{\varphi_{[F],x_0}}(\gamma)\,\frac{\langle v,q_{F,x}\rangle}{\|q_{F,x}\|} d\PThursM^{x}([F])
$$
for $x_0\in \teich_{g,m}$.
Since $\rho_{x,x}$ is the Fuchsian representation of $x$, from the argument in the proof of the variation of the hyperbolic length, we can check
\begin{equation}
\label{eq:sinh-Theta}
\left(\partial {\rm tr}^2\rho_{\varphi_{\cdot,x}}(\gamma)\right)[v]=4\sinh(\ell_\gamma(x))\cdot \frac{1}{2}\langle v,\Theta_{\gamma,x}\rangle
\end{equation}
(e.g. \cite[Theorem 8.3]{MR1215481}). This implies what we wanted.
\end{proof}

\subsubsection{The case of $(1,1)$}
In the case of $(g,m)=(1,1)$, we give a concrete explanation of Theorem \ref{thm:Wolpert-Hubbard-Masur}. 
Since the identification $\boldsymbol{\beta}\colon \mathbb{H}\cong \teich_{1,1} \to \Bers{\tau_0}$ is the Riemann mapping which sends $\tau_0$ to $0$,
$$
\mathbb{H}\ni \tau\mapsto {\rm Tr}_\gamma(\tau):={\rm tr}^2\left(\rho_{\varphi_{\boldsymbol{\beta}(\tau),\tau_0}}(\gamma)\right)
$$
is a holomorphic function which extends continuously to $\mathbb{H}\cup \hat{\mathbb{R}}$ and satisfies ${\rm Tr}_\gamma(\xi)={\rm tr}^2\left(\rho_{\varphi_{F_{[a,b]},\tau_0}}(\gamma)\right)$ when $u=-b/a$ (cf. \S\ref{subsec:Case_once_tori}). Notice that the representation $\rho_{\varphi_{F_{[a,b]},\tau_0}}$ is well-defined in this case even when $u\in \hat{\mathbb{Q}}$, since the complement of a simple closed curve in a once punctured torus is a three hold sphere. From the residue theorem, the right-hand side of \eqref{eq:Wolpert-Hubbard-Masur} is equal to
\begin{equation}
\label{eq:OPT_2}
\frac{1}{2\sinh(\ell_\gamma(\tau_0))}\left(\int_{\hat{\mathbb{R}}}{\rm Tr}_\gamma(u)\frac{-1}{\pi}\frac{du}{(u-\tau_0)^2}\right)dz^2
=\frac{-\sqrt{-1}}{\sinh(\ell_\gamma(\tau_0))}\frac{d{\rm Tr}_\gamma}{d\tau}(\tau_0)dz^2
\end{equation}
for $\gamma\in \pi_1(\Sigma_{1,1})$. 

Notice that the differential $\partial/\partial \tau$ on $\mathbb{H}$ at $\tau=\tau_0$ is induced by the Beltrami differential $\mu=(\sqrt{-1}/(2{\rm Im}(\tau_0))(d\overline{z}/dz)$ on $M_{\tau_0}$ (e.g. \cite[\S7.2]{MR3715450}). This means that for a smooth function $F$ around $\tau=\tau_0$, if $\partial F$ is associated to $Adz^2\in \mathcal{Q}_{M_{\tau_0}}$, 
$$
\frac{\partial F}{\partial \tau}(\tau_0)=\langle\mu,Adz^2\rangle=\frac{\sqrt{-1}}{2}A.
$$
Thus, from \eqref{eq:sinh-Theta}, \eqref{eq:OPT_2} implies \eqref{eq:Wolpert-Hubbard-Masur} in the case when $(g,m)=(1,1)$.

\section{Questions}
\subsection{}

Theorem \ref{thm:Poisson-integral-formula}, Corollary \ref{thm:boundary-value} and Corollary \ref{coro:integral-representation-PMF} give an interaction between holomorphic functions on Teichm\"uller space and measurable functions on the Bers boundary and $\mathcal{PMF}$. A natural problem from our integral formula is:
\begin{question}
Determine the classes of holomorphic or pluriharmonic functions on $\teich_{g,m}$ to which the Poisson integral formula \eqref{eq:main-Poisson-integral-formula} apply. 
\end{question}
We have already stated the homogeneous tangential Cauchy-Riemann equation (in the distribution sense) in \eqref{eq:CR} and \eqref{eq:remark-CR}. Since they are given by integration, it is hard to derive infinitesimal properties of the boundary functions.

\subsection{}
Our Poisson integral formula is for pluriharmomonic functions which are continuous on the Bers compactifications. Since the Bers slices depend on the choice of the base point, the class of pluriharmonic functions continuous up to a Bers boundary possibly looks like the wrong object of study (the author thanks referees for pointing it out). On the other hand, as noticed in \S\ref{subsec:History}, any holomorphic function on the Teichm\"ulller space is approximated by holomorphic functions which are continuous up to the Bers boundary. Hence, holomorphic functions which are continuous up to the Bers boundary would be worth to study in some sense. 

\begin{question}
Fix a base point $x_0\in \teich_{g,m}$.
When measurable functions on $\mathcal{PMF}^{mf}$ or $\partial^{mf} \Bers{x_0}$ extend as pluriharmonic (or holomorphic) functions on $\Bers{x_0}$ which are continuous on the Bers boundary?
\end{question}
This question will be related to a problem which asks how the Bers slices depend the base points. Namely, even if some measurable function on $\mathcal{PMF}^{mf}$ extends continuously on $\overline{\Bers{x_0}}$ and pluriharmonically on $\Bers{x_0}$, it will not do for $\Bers{x_1}$ for some $x_1\ne x_0$, This problem originates from Kerckhoff and Thurston's observation \cite{MR1037141}. 

\subsection{}
In this paper, we settle the Poisson integral formula on the Bers compactification. As noticed in \S\ref{subsec:History}, there are many embeddings (slices) which realize the Teichm\"uller space. For instance, the Maskit slice \cite{MR346149} is a version of the upper-half space model of the Teichm\"uller space.
\begin{question}
Study the Poisson integral formula for various slices of Teichm\"uller spaces.
\end{question}


\bibliographystyle{plain}
\bibliography{References-1}

\end{document}